\renewcommand{\vec}[1]{\mbox{\boldmath$#1$}}
\newcommand{\dif}{\mathrm{d}}
\newtheorem{theorem}{Theorem}[section]
\newtheorem{remark}[theorem]{Remark}
\newtheorem{definition}[theorem]{Definition}
\newtheorem{lemma}[theorem]{Lemma}
\newtheorem{corollary}[theorem]{Corollary}
\DeclareMathOperator*{\esssup}{ess \, sup}
\newcommand{\im}{\mathrm{i}}
\begin{document}
\begin{center}
{\Large \bf 
Generalised Magnetic Polarizability Tensors}
\\
$^*$P.D. Ledger and $^\dagger$W.R.B. Lionheart \\
$^*$Zienkiewicz Centre for Computational Engineering, College of Engineering, \\Swansea University Bay Campus, Swansea. SA1 8EN\\
$^\dagger$School of Mathematics, Alan Turing Building, \\The University of Manchester, Oxford Road, Manchester. M13 9PL\\
13th January 2018
\end{center}

\section*{Abstract}
We present a new complete asymptotic expansion for the low frequency time-harmonic magnetic field perturbation caused by the presence of a conducting (permeable) object as its size tends to zero for the eddy current regime of Maxwell's equations. The new asymptotic expansion allows the characterisation of the shape and material properties of such objects by a new class of generalised magnetic polarizability tensors and we provide an explicit formula for their calculation. Our result will have important implications  for metal detectors since it will improve small object discrimination and, for situations where the background field varies over the inclusion, this information will be useable, and indeed useful, in identifying their shape and material properties. Thus, improving the ability of
metal detectors to locate landmines and unexploded ordnance, sort metals in recycling processes, ensure food safety as well as enhancing security screening at airports and public events.

\vspace{0.1in}

\noindent {\em MSC: 35R30, 35B30}

\vspace{0.1in}

\noindent {\bf Keywords:} Polarizability Tensors; Asymptotic Expansion; Eddy Currents; Metal Detectors; Land Mine Detection

\section{Introduction}

The characterisation of highly conducting objects from low frequency magnetic field perturbations has important applications in metal detection where the goal is to locate and identify concealed inclusions in an otherwise low conducting background. Metal detectors are used in the search for artefacts of archaeological significance, the detection of landmines and unexploded ordnance, the recycling of metals, ensuring food safety as well as in security screening at airports and at public events. The ability to better characterise objects offers considerable advantages in reducing the number of false positives in metal detection and, in particular, to accelerate and improve the accuracy of object location and discrimination.

For a range of electromagnetic and acoustic phenomena similar findings have been found where, in each case, an asymptotic expansion of the field perturbation caused by the presence of an inclusion as its size, $\alpha $, tends to zero results in formula which permits the low-cost characterisation of an object. In particular, in electromagnetics,
the expansion has been found to be of the form 
\begin{equation}
( {\vec U}_\alpha-{\vec U}_0 )( {\vec x} )_i = ( {\vec D}_{x}^2 G( {\vec x} , {\vec z} ) )_{ij} {\mathcal A}_{jk} ( {\vec U}_0 ({\vec z}))_k +( {\vec R} )_i \label{eqn:basicform},
\end{equation}
when orthonormal coordinates and Einstein summation convention is used. In this expression $({\vec U}_\alpha - {\vec U}_0)({\vec x})$ represents the perturbed field at location ${\vec x}$, the object is assumed to have the form $B_\alpha =\alpha B+{\vec z}$, where $B$ is its shape and ${\vec z}$ denotes its position, ${\vec U}_0({\vec x})$ is the background field, $ {\vec D}_x^2 G({\vec x},{\vec z})$ is the Hessian of an appropriate Green's function, ${\vec R}$ is a residual term and ${\mathcal A}$ is a symmetric rank 2 polarizability tensor. The polarizability tensor is independent of ${\vec z}$ and is a function of $B$, and hence its topology~\footnote{ The zeroth Betti number $\beta_0(B)$, is the number of connected parts of $B$, which for a bounded connected region in ${\mathbb R}^3$ is always $1$. The first Betti number, $\beta_1(B)$ is the genus, i.e. the number of handles and the second Betti number $\beta_2(B)$ is one less than the connected 
parts of the boundary $\partial B$, ie. the number of cavities (e.g.~\cite{bossavit1998}). },
it is also a function of the object's material characteristics and, thus, provides a means for its characterisation. {Importantly, (\ref{eqn:basicform}) separates an object characteristics from the background field and, consequently, has applications in the low-cost identification of hidden targets in inverse problems~\cite{ammarikangbook, ammarikanglecturenotes}.}

 Explicit formulae for the calculation of polarizability tensors have been found in a range of different electromagnetic applications. These include the leading order term in an expansion of the perturbed magnetic field $({\vec H}_\alpha-{\vec H}_0)({\vec x})$ as $\alpha \to 0$ for a (multiply connected) permeable object with $\beta_0(B)=1,  \beta_1 (B)\ge 0, \beta_2(B)\ge 0$ in magnetostatics~\cite{ledgerlionheart2016} and expansions of   $({\vec H}_\alpha-{\vec H}_0)({\vec x})$, and of the perturbed electric field $({\vec E}_\alpha-{\vec E}_0)({\vec x})$, as $\alpha \to 0$ in electromagnetic scattering by simply connected dielectric, permeable or conducting objects with $\beta_0(B)=1, \beta_1(B)=\beta_2(B)=0$~\cite{ammarivogeluisvolkov,ammarivolkov2005,ledgerlionheart2012}.
 In the aforementioned cases, ${\mathcal A}={\mathcal T}(c)$ is a suitably parameterised rank 2 P\'oyla-Szeg\"o tensor and its coefficients can be computed by solving $3$ scalar transmission problems or through the solution of  $3$ integral equations~\cite{ammarikangbook} for  given $\alpha$, $B$ and material contrast $c$.

More recently, for the metal detection problem, Ammari, Chen, Chen, Garnier and Volkov~\cite{ammarivolkov2013} have obtained the leading order term in an expansion of  $({\vec H}_\alpha-{\vec H}_0)({\vec x})$   as $\alpha \to 0$ for a highly conducting (multiply connected) object placed in a low-frequency time harmonic background field, ${\vec H}_0({\vec x})$. This expansion was obtained for the eddy current regime of the Maxwell system, which is the relevant case for metal detection, and they showed that the object is characterised by a rank 4 tensor and, therefore,  is of a different form to (\ref{eqn:basicform}). However, for the case of orthonormal coordinates, we have shown that their leading order term does reduce to the same form as in (\ref{eqn:basicform}) and, in this case,  ${\mathcal A}=\widecheck{\widecheck{\mathcal M}}$ is a new complex symmetric rank 2 tensor~\cite{ledgerlionheart2014}. In this notation, a single check indicates a reduction in a tensor's rank by one, which is achieved due to skew symmetry of the tensor's coefficients in two of its indices, a double check indicates a reduction in a tensor's rank by two. The coefficients of this tensor are computed by solving $3$ vectorial transmission problems and are a function of $B$, $\alpha$, the object's conductivity, $\sigma_*$, its permeability contrast, $\mu_*/\mu_0$,  as well as the frequency of excitation, $\omega$. This result also provides a solid mathematical footing for denoting $\widecheck{\widecheck{\mathcal M}}$ as the rank 2 magnetic polarizability tensor (MPT), which the electrical engineering community predict for describing the characteristics of a conducting object in metal detection e.g.~\cite{norton2001,barrowes2004,peyton2013}. 
 In~\cite{ledgerlionheart2016}, we have  obtained further results which relate the coefficients of $\widecheck{\widecheck{\mathcal M}}$ to  ${\mathcal T}(\mu_*/\mu_0)$ in the limiting case of $\omega \to 0$, and, independent of the value of $\beta_1(B)$. For the limiting case of $\sigma_* \to \infty$, we have also shown that the coefficients of $\widecheck{\widecheck{\mathcal M}}$ tend to those of ${\mathcal T}(0)$ if $\beta_1(B)=0$. These results allow permeable and non--permeable objects to be distinguished and some topology information to be extracted. Furthermore, we have computed $\widecheck{\widecheck{\mathcal M}}$ for a range of simply and multiply connected objects using a $hp$-finite element approach and explored how their coefficients vary over a range of frequencies, within the validity of the eddy current model~\cite{ledgerlionheart2016}.

Although the leading order term in the expansion of $({\vec H}_\alpha-{\vec H}_0)({\vec x})$ as $\alpha \to 0$ and $\widecheck{\widecheck{\mathcal M}}$ provide useful information about an object, they impose limitations due to the assumption that ${\vec H}_0({\vec x})$ is uniform over the inclusion, with it only being evaluated at ${\vec z}$, and that $\widecheck{\widecheck{\mathcal M}}$ has at most $6$ independent complex coefficients.
In practical magnetic induction metal detection and testing, however, the ${\vec H}_0({\vec x})$ generated by coil arrays is significantly non-uniform over the object unless the distance from the coils is very large. For instance,  this is the case in walk--through metal detectors, when there is little space between the coil arrays and the person being tested for a security threat, and in subsurface metal detection, when a metallic object is buried close to the surface. In such situations, the leading order term in the expansion of  $({\vec H}_\alpha-{\vec H}_0)({\vec x})$ as $\alpha \to 0$ will not accurately describe the field perturbation and $\widecheck{\widecheck{\mathcal M}}$ will not provide an accurate object characterisation.
Still further, there are difficulties in separating geometrical information from the material contrast $c$ in ${\mathcal T}(c)$~\cite{ammarikangbook}, and hence the limiting cases of $\widecheck{\widecheck{\mathcal M}}$.   
Finally, if an object has rotational or reflectional symmetries the number of non-zero independent coefficients in a symmetric rank 2 tensor are greatly reduced~\cite{ledgerlionheart2014} making discrimination between objects difficult (eg.
the independent non-zero coefficients of $\widecheck{\widecheck{\mathcal M}}$ for a cylinder and a cone are the same {due to rotational and reflectional symmetries that are present in both objects, even though the cylinder has an additional mirror symmetry normal to a rotation axis that is not present in a cone}).

In order to describe the field perturbation more accurately, and better characterise a conducting permeable object, higher order terms in the asymptotic expansion are required.
 For the electrical impedance tomography (EIT) problem, where the perturbed electric field due to the presence of a small conducting inclusion can be described in terms of the gradient of a scalar potential, Ammari and Kang have obtained a complete asymptotic expansion as $\alpha \to 0$~\cite{ammarikang2003,ammarikangbook}. Here, the object is described by generalised polarizability tensors (GPTs) with the lowest order case agreeing with the rank 2 tensor ${\mathcal T}(c)$. This class of GPTs satisfies symmetry conditions on the space of harmonic polynomials. Complete asymtopic expansions of the perturbed field for a small object have also been obtained for acoustics~\cite{ammarikanghelm} and the elasticity problem~\cite{ammarikangbook} where the object is again characterised by GPTs.

In this work, we provide a new complete asymptotic expansion of $({\vec H}_\alpha- {\vec H}_0)({\vec x})$ for a highly conducting (possibly permeable and multiply connected) object as $\alpha \to 0$. Thus, extending the result in~\cite{ammarivolkov2013,ledgerlionheart2014}, which provided only the leading order term. We write our result in terms of a new class of (higher rank) generalised magnetic polarizability tensors (GMPTs), which characterise the object's shape and its material characteristics. 
The GMPTs we obtain are quite different to the GPTs previously presented by Ammari and Kang. The explicit expression for their coefficients are with respect to the standard orthonormal basis rather than the space of harmonic polynomials. 
They are functions  of $B$,  $\alpha$, $\sigma_*$, $\mu_*/\mu_0$, $\omega$ and can be computed by solving a generalised form of the vectorial transmission problem obtained in~\cite{ammarivolkov2013,ledgerlionheart2014}. Moreover, the leading order term in our new expansion agrees with our previous result~\cite{ledgerlionheart2014} and here the GMPT agrees with  $\widecheck{\widecheck{\mathcal M}}$. 
Our new complete expansion will overcome the aforementioned difficulties of  just using the leading order term for $({\vec H}_\alpha- {\vec H}_0)({\vec x})$  as $\alpha \to 0$ and describing the object using $\widecheck{\widecheck{\mathcal M}}$ when  ${\vec H}_0({\vec x})$ is non--uniform over the object, such as in a walk--through metal detector for a security threat and in subsurface metal detection for a metallic object buried close to the surface. 
For such applications, it will improve the accuracy of $({\vec H}_\alpha-{\vec H}_0)({\vec x})$, by including more terms in the expansion, and will improve the characterisation of an object's shape and its material properties, by the increased number of independent parameters in the GMPTs. Consequently, improving object identification and location.

The paper is organised as follows: In Section~\ref{sect:problemconfig}, the problem configuration is briefly described and some notation is introduced for the presentation of our new results.  Section~\ref{sect:leadingterm} summaries the previously known results about  $\widecheck{\widecheck{\mathcal M}}$ and the leading order term of $({\vec H}_\alpha- {\vec H}_0)({\vec x})$  as $\alpha \to 0$ due to the presence of a highly conducting object~\cite{ammarivolkov2013,ledgerlionheart2014}. Section~\ref{sect:mainresult} states our new main result and Section~\ref{sect:derive} contains the lemmas associated with the derivation of our asymptotic formula. Finally, Section~\ref{sect:tensorrep} is concerned with the representation of the asymptotic formula in terms of a new class of higher order GMPTs.

\section{Problem configuration} \label{sect:problemconfig}
The problem configuration has already been described in~\cite{ammarivolkov2013,ledgerlionheart2014} and is briefly recalled.
We consider an electromagnetic inclusion in ${\mathbb R}^3$ of the form $B_\alpha ={\vec z} + \alpha B$, where $B \subset {\mathbb R}^3$ is a bounded, smooth domain. Let $\Gamma$ and $\Gamma_\alpha$ denote the boundary of $B$ and $B_\alpha$, respectively, and $\mu_0$ the permeability of free space. We continue to follow the previous notation and write
\begin{equation}
\mu_\alpha=\left \{ \begin{array}{ll}
\mu_* & \hbox{in $B_\alpha$} \\
\mu_0 & \hbox{in ${\mathbb R}^3 \setminus B_\alpha$} \end{array} \right . ,
\sigma_\alpha=\left \{ \begin{array}{ll}
\sigma_* & \hbox{in $B_\alpha$} \\
0 & \hbox{in ${\mathbb R}^3 \setminus B_\alpha$} \end{array} \right . , \label{eqn:matparam}
\end{equation}
where $\mu_*$ and $\sigma_*$ denote the object's permeability and conductivity, respectively, which we assume to be constant. 
The time harmonic fields ${\vec E}_\alpha$ and ${\vec H}_\alpha$ that result from a {compactly supported} time varying current source, ${\vec J}_0$, located away from $B_\alpha$ and satisfying $\nabla\cdot{\vec J}_0=0$ in ${\mathbb R}^3$, and their interaction with the object $B_\alpha$, satisfy the eddy current equations in a weak sense~\cite{ammaribuffa2000}
\begin{subequations}
\begin{align}
\nabla \times {\vec E}_\alpha & = \im \omega \mu_\alpha {\vec H}_\alpha && \hbox{in ${\mathbb R}^3$} ,\\
\nabla \times {\vec H}_\alpha & = \sigma_\alpha {\vec E}_\alpha +{\vec J}_0
&& \hbox{in ${\mathbb R}^3$} ,\\
{\vec E}_\alpha ({\vec x}) = O(|{\vec x}|^{-1}),  \, &   {\vec H}_\alpha ({\vec x}) = O(|{\vec x}|^{-1}) && \hbox{as $|{\vec x}| \to \infty $} ,
\end{align} \label{eqn:eddymodel}
\end{subequations}
where $\omega$ denotes the angular frequency and $\im:=\sqrt{-1}$. Letting $\alpha =0$ in (\ref{eqn:eddymodel}) we obtain the corresponding fields, ${\vec E}_0$ and ${\vec H}_0$, that result from time varying current source in the absence of an object.  As explained in~\cite{ammaribuffa2000}, the eddy current model is completed by $\nabla \cdot {\vec E}_\alpha =0$ in ${\mathbb R}^ 3 \setminus B_\alpha$ and the uniqueness of ${\vec E}_\alpha$ in ${\mathbb R}^3 \setminus B_\alpha$ is achieved by additionally specifying
\begin{equation}
\int_{\Gamma_\alpha} {\vec n} \cdot {\vec E}_\alpha |_+ \dif {\vec x} = 0, \label{eqn:efieldunique}
\end{equation}
where ${\vec n}$ is the outward normal to $\Gamma _\alpha$. Furthermore, in practice, the decay of the fields is actually faster than the $|{\vec x}|^{-1}$ stated in the original eddy current model~\cite{ammaribuffa2000}.

The task is to develop an asymptotic expansion for $ ({\vec H}_\alpha  - {\vec H}_0) ({\vec x} )$ as $\alpha \to 0$ for the case where 
\begin{equation}
\nu : =k \alpha^2=O(1) , \qquad k:=\omega \mu_0 \sigma_*, \label{eqn:nudefine}
\end{equation}
which includes the case of fixed $\sigma_*,\mu_*, \omega$ as $\alpha \to 0$ (since in this case $|\nu| \le C \alpha^2 \le C$). Notice that the condition on $\nu$  is required to ensure the eddy current model is not violated as the object size vanishes.

For what follows  it is beneficial to introduce the following notation:
\begin{definition} 
We will use boldface for vector quantities  (e.g. ${\vec u}$) and denote by ${\vec e}_j$, $j=1,2,3$ the units vectors associated with an orthonormal coordinate system. We denote the $j$-th component of a vector ${\vec u}$ in this coordinate system by $({\vec u})_j = {\vec u} \cdot {\vec e}_j = u_j$. 
\end{definition}

\begin{definition}
We will use calligraphic symbols to denote rank 2 tensors  e.g. ${\mathcal N} = {\mathcal N}_{ij} {\vec e}_i \otimes {\vec e}_j$ and denote their coefficients by ${\mathcal N}_{ij}$. 
\end{definition}

\begin{definition}
By symbols in the Fraktur font, e.g. ${\mathfrak A}$, we shall denote higher order tensors and, to describe their coefficients with respect to an orthonormal coordinate basis, it will be useful to introduce  the $p$-tuple of positive integers $J(p):=[j_1 , j_2, \cdots , j_p]$, the $m$-tuple of positive integers  $K(m):=[k_1 , k_2, \cdots ,k_m]$ and to introduce the $(p+1)$-- and $(m+1)$--tuple of positive integers ${J}(p+1) =: [ j, J(p)]$ and ${K}(m+1)=[k,K(m)]$, respectively. Thus by ${\mathfrak N}_{ [  j,{J}(p), k,{K}(m) ] } ={\mathfrak N}_{  {J}(p+1) {K}(m+1)  } $~\footnote{When no confusion arises, we will drop the square parentheses on  the lists of  indices} we denote the coefficients of the rank $2+ p+m$ tensor 
\begin{equation}
\displaystyle {\mathfrak N} = {\mathfrak N}_{  {J}(p+1) {K}(m+1)  } {\vec e}_j  \otimes \left ( \bigotimes_{\ell=1}^p  {\vec e}_{j_\ell}  \right ) \otimes {\vec e}_k \otimes  \left ( \bigotimes_{\ell=1}^m  {\vec e}_{k_\ell}  \right ) . \nonumber
\end{equation}
 For $p= m=0$ this reduces  to the rank 2 tensor ${\mathcal N} = {\mathcal N}_{kj} {\vec e}_k \otimes {\vec e}_j = {\mathfrak N}_{kj}  {\vec e}_k \otimes {\vec e}_j$. Consider the rank $4+m+p$ tensor 
 \begin{align}
\displaystyle {\mathfrak A} = & {\mathfrak A}_{ [  h,i,k,K(m) , j, J(p)   ] }  {\vec e}_h\otimes  {\vec e}_i  \otimes {\vec e}_k \otimes  \left ( \bigotimes_{\ell=1}^m {\vec e}_{k_\ell}  \right ) \otimes
 {\vec e}_j \otimes \left ( \bigotimes_{\ell=1}^p  {\vec e}_{j_\ell}  \right ) , \nonumber 
\end{align}
{often we will write $ {\mathfrak A}_{ [  h,i,k,K(m) , j, J(p)   ] }= {\mathfrak A}_{ [  [h,i,k,K(m) ], [j, J(p)]   ] }= {\mathfrak A}_{ [  [h,i,K(m+1) ], J(p+1)   ] }$ to group indices and assist when considering products with other terms as well as when considering the skew symmetries of this tensor. However, by the introduction of such brackets, we do not imply skew systematization over these indices. Using skew symmetries,}
we will denote by a single check  (i.e. $\widecheck{\displaystyle {\mathfrak A} }$) a reduction in rank by $1$ and by a double check (i.e. $\widecheck{\widecheck{\displaystyle {\mathfrak A} }}$) a reduction in its rank by $2$.
\end{definition}

\begin{definition} Let
 $\displaystyle(\Pi({\vec \xi}))_{J(p)}: =\prod_{\ell=1}^p \xi_{j_\ell} =({\vec \xi})_{j_1} ({\vec \xi})_{j_2} \cdots ({\vec \xi})_{j_p}= \xi_{j_1} \xi_{j_2} \cdots \xi_{j_p}$ and $\displaystyle(\Pi({\vec \xi}))_{K(m)}: =\prod_{\ell}^m \xi_{k_\ell}= \xi_{k_1} \xi_{k_2} \cdots \xi_{k_m}$ where ${\vec \xi}\cdot {\vec e}_j= \xi_j$ is the $j$th spatial coordinate measured from an origin contained in $B$. 
Furthermore, when $p=0$  (or $m=0$ ) then $J=\emptyset$  (respt. $K=\emptyset$) and, in this case, $(\Pi({\vec \xi}))_\emptyset=1$. 
\end{definition}

Using this notation,  we shall imply the Einstein summation convention for repeated sets of indices as appropriate.

{ 
\begin{definition}
We recall that for $0\le \ell < \infty$, $0\le p< \infty$,
\begin{equation}
\| {\vec u} \|_{W^{\ell ,p}(B_\alpha)} : = \left ( \sum_{j=0}^\ell  \int_{B_\alpha} | {\vec D}^j( {\vec u}({\vec x}))|^p \dif {\vec x} \right)^{1/p}, \nonumber 
\end{equation}
where the derivatives are defined in a weak sense and 
\begin{equation}
\| {\vec u} \|_{W^{\ell ,\infty}(B_\alpha)} : =\esssup_{{\vec x} \in B_\alpha} \sum_{j=0}^\ell | {\vec D}^j( {\vec u}({\vec x}))| \nonumber .
\end{equation}
\end{definition}
}

\section{Leading order term of  $ ({\vec H}_\alpha  - {\vec H}_0) ({\vec x} )$ as $\alpha \to 0$} \label{sect:leadingterm}
In~\cite{ammarivolkov2013}, Ammari {\em et al.} have obtained the leading order term in an asymptotic expansion of $ ({\vec H}_\alpha  - {\vec H}_0) ({\vec x} )$ as $\alpha \to 0$ for $\nu =O(1)$ and ${\vec x}$  away from the location ${\vec z}$ of the inclusion.   In~\cite{ledgerlionheart2014} we have previously shown that their result can be conveniently expressed using Einstein summation notation and, in the case of orthonormal coordinates,  that it reduces to
\begin{align}
({\vec H}_\alpha -& {\vec H}_0) ({\vec x} )_i =  ({\vec D}_{ x}^2 G( {\vec x} , {\vec z} ))_{ik} \widecheck{\widecheck{\mathcal M}}_{kj} ({\vec H}_0  ( {\vec z} ))_j +({\vec R}({\vec x}))_i  \label{eqn:rank2asymform},
\end{align}
with $|{\vec R}({\vec x})| \le C \alpha^{4}\| {\vec H}_0 \|_{W^{2,\infty}(B_\alpha)}$. In the above, $({\vec D}_x^2 G( {\vec x} , {\vec z}) )_{ik}$ are the components of the rank 2 tensor
${\vec D}_x^2 G( {\vec x} , {\vec z} ) = 1/( 4 \pi |{\vec r}|^3 ) ( 3\hat{\vec r}\otimes \hat{\vec r} - {\mathbb I}) =( {\vec D}_x^2 G( {\vec x} , {\vec z} ))_{ik} {\vec e}_i \otimes {\vec e}_k $. This is obtained  from $ ({\vec D}_x^2 G( {\vec x}, {\vec z}) )_{ik} : = \partial_{x_i} \partial_{x_k}(G( {\vec x}, {\vec z}) )$ where $G({\vec x},{\vec z}) : =1/(4 \pi |{\vec x}- {\vec z}|)= 1/(4 \pi |{\vec r}|)$, ${\vec r}= {\vec x}-{\vec z}$, $\hat{\vec r}= {\vec r}/|{\vec r}|$ and $({\mathbb I)}_{ik}=\delta_{ik}$ are the components of the identity tensor, which are equal to the Kronceker delta.
Furthermore, we have shown that $\widecheck{\widecheck{\mathcal M}}_{kj} := - \widecheck{\mathcal C}_{kj} + {\mathcal N}_{kj}$  are the coefficients of a  complex symmetric {\em magnetic polarizability} (MPT) rank 2 tensor,  which describes the shape, conductivity, and permeability (contrast) of the object, and is computed using
\begin{subequations}
\begin{align} 
\widecheck{\mathcal C}_{kj} :=& -\frac{\im \nu \alpha^3 }{4}{\vec e}_k \cdot \int_B {\vec \xi} \times ({\vec \theta}_j + {\vec e}_j \times {\vec \xi} ) \dif {\vec \xi}, 
\\
{\mathcal N}_{kj} := &  \alpha^3 \left ( 1- \frac{\mu_0}{\mu_*} \right ) \int_B \left (
\delta_{kj} + \frac{1}{2}  {\vec e}_k \cdot  \nabla_\xi \times {\vec \theta}_j \right ) \dif {\vec \xi}. 
\end{align}\label{eqn:definerank2tensor}
\end{subequations}
These, in turn, rely on the vectoral solutions ${\vec \theta}_j$, $j=1,2,3$ to the transmission problem
\begin{subequations}
\begin{align}
\nabla_\xi \times \mu_*^{-1} \nabla_\xi \times {\vec \theta}_j - \im \omega \sigma_* \alpha^2 {\vec \theta }_j & = \im \omega \sigma_* \alpha^2 {\vec e}_j \times {\vec \xi}  && \text{in $B$ } ,\\
\nabla_\xi \cdot {\vec \theta}_j  = 0 , \qquad \nabla_\xi \times \mu_0^{-1} \nabla_\xi \times {\vec \theta}_j  & = {\vec 0} && \text{in ${\mathbb R}^3 \setminus B$ }, \\
[{\vec n} \times {\vec \theta}_j ]_\Gamma = {\vec 0},  \qquad [{\vec n} \times \mu^{-1} \nabla_\xi \times {\vec \theta}_j ]_\Gamma & = -2 [\mu^{-1 } ]_\Gamma {\vec n} \times {\vec e}_j  && \text{on $\Gamma:= \partial B$},\\
\int_\Gamma {\vec n} \cdot {\vec \theta}_j |_+ \dif {\vec \xi} & = 0, \\
{\vec \theta}_j & = O( | {\vec \xi} |^{-1}) && \text{as $|{\vec \xi} | \to \infty$ },
\end{align}\label{eqn:transproblemthetar2}
\end{subequations}
where $[\cdot ]_\Gamma $ denotes the  jump of the function  over $\Gamma$.
Note that ${\vec \theta}_j \ne ({\vec \theta})_j$, the latter being the $j$th component of a vector. Instead, the subscript $j$ should be interpreted as the $j$th solution of the transmission problem corresponding to the source terms in $B$ and on $\Gamma$ being constructed from the $j$th unit vector ${\vec e}_j$. In the above, we have dropped the subscript $\alpha$ on the position dependent $\mu$ as this problem is formulated for the object $B$ rather than $B_\alpha$.

\section{Complete asymptotic expansion of $ ({\vec H}_\alpha  - {\vec H}_0) ({\vec x} )$ as $\alpha \to 0$ } \label{sect:mainresult}

Our main result is
\begin{theorem}~\label{thm:main}
The magnetic field perturbation in the presence of a small conducting object $B_\alpha = \alpha B +{\vec z}$ for the eddy current model when $\nu$ is order one and ${\vec x}$ is away from the location ${\vec z}$ of the inclusion is completely described by the asymptotic formula
\begin{align}
({\vec H}_\alpha - {\vec H}_0) ({\vec x} )_i =&  \sum_{m=0}^{M-1} \sum_{p=0}^{M-1-m} 
( {\vec D}_{ x}^{2+m} G( {\vec x} , {\vec z} ))_{[i, {K}(m+1)]} \widecheck{\widecheck{\mathfrak M}}_{{K}(m+1) {J} (p+1)} ({\vec D}_z^p ({\vec H}_0  ( {\vec z} )))_{{J}(p+1)} +\nonumber \\
& ({\vec R}({\vec x}))_i, \label{eqn:mainresult} \\
{J}(p+1):=  &[j ,J(p)] = [j ,j_1,j_2,\cdots, j_p] , \nonumber \\
 {K}(m+1) :=&[k ,K(m)] = [k ,k_1,k_2,\cdots, k_m] ,\nonumber
\end{align}
with $|{\vec R}({\vec x})| \le C \alpha^{3+M}\| {\vec H}_0 \|_{W^{M+1,\infty}(B_\alpha)}$. In the above, $J(p)$ and $K(m)$ are $p$-- and $m$--tuples of integers, respectively, with each index taking values $1,2,3$.  Also
\begin{align}
 ( {\vec D}_{ x}^{2+m} G( {\vec x} , {\vec z} ))_{[i, {K}(m+1)]}  & =\left ( \prod_{\ell=1}^m  \partial_{x_{k_\ell}} \right ) (  \partial_{x_{k}} ( \partial_{x_{i} }( G( {\vec x} , {\vec z} )))) ,\nonumber \\
 ({\vec D}_z^p ({\vec H}_0  ( {\vec z} )))_{{J}(p+1)} & = \left (  \prod_{\ell=1}^p \partial_{z_{j_\ell}} \right )( {\vec H}_0({\vec z}) \cdot {\vec e}_j), \nonumber
\end{align}
and the coefficients of a rank $2+p+m$ generalised magnetic polarizability tensor (GMPT) are defined by
\begin{align}
\widecheck{\widecheck{\mathfrak M}}_{{K}(m+1) {J}(p+1)}  :=& -\widecheck{\mathfrak C}_{{K} (m+1) {J} (p+1) } + {\mathfrak N}_{ {K} (m+1) {J} (p+1)} , 
\end{align}
where
\begin{align}
\widecheck{\mathfrak C}_{ {K} (m+1) {J} (p+1) } :=& - \frac{\im \nu \alpha^{3+m+p}(-1)^m}{2(m+1)! p! (p+2)} {\vec e}_k \cdot  \nonumber \\
&\int_B {\vec \xi} \times \left (  ( \Pi( {\vec \xi}))_{K(m)} ({\vec \theta}_{{J} (p+1) } + (\Pi({\vec \xi}))_{J(p)} {\vec e}_j \times {\vec \xi} )\right ) \dif {\vec \xi} , \nonumber \\
{\mathfrak N}_{{K} (m+1) {J} (p+1) } : = & \left ( 1 - \frac{\mu_0}{\mu_*} \right ) \frac{ \alpha^{3+m+p} (-1)^m}{p! m!} {\vec e}_k \cdot \nonumber \\
& \int_B (\Pi( {\vec \xi}))_{K(m)}  \left ( \frac{1}{p+2} \nabla_\xi \times {\vec \theta}_{{J}(p+1) } +(\Pi( {\vec \xi}))_{J(p)} {\vec e}_j  \right ) \dif {\vec \xi}. \nonumber 
\end{align}
In the above, $ {\vec \theta}_{{J} (p+1)} $ satisfy the transmission problem
\begin{align*}
\nabla_\xi \times \mu_*^{-1} \nabla_\xi \times {\vec \theta}_{ {J}(p+1) } - \im \omega \sigma_* \alpha^2 {\vec \theta }_{ {J}(p+1) } & = \im \omega \sigma_* \alpha^2(\Pi({\vec \xi}))_{J(p)} {\vec e}_j \times {\vec \xi}  && \text{in $B$ }, \\
\nabla_\xi \cdot {\vec \theta}_{ {J} (p+1) }  = 0 , \qquad \nabla_\xi \times \mu_0^{-1} \nabla_\xi \times {\vec \theta}_{ {J}(p+1) }  & = {\vec 0} && \text{in ${\mathbb R}^3 \setminus B$ } , \\
[{\vec n} \times {\vec \theta}_{ {J} (p+1) }  ]_\Gamma &  = {\vec 0} && \text{on $\Gamma$},\\
  [{\vec n} \times \mu^{-1} \nabla_\xi \times {\vec \theta}_{{J} (p+1)}  ]_\Gamma & = -(p+2) [\mu^{-1 } ]_\Gamma ({\vec n} \times {\vec e}_j (\Pi ({\vec \xi}))_{J(p)} ) && \text{on $\Gamma$},\\
\int_\Gamma {\vec n} \cdot {\vec \theta}_{ {J} (p+1) } \dif {\vec \xi} & = 0 ,\\
{\vec \theta}_{ {J} (p+1) } & = O( | {\vec \xi} |^{-1}) && \text{as $|{\vec \xi} | \to \infty$ }, 
\end{align*}
$\displaystyle(\Pi({\vec \xi}))_{J(p)} := \prod_{\ell=1}^p  \xi_{j_\ell}= \xi_{j_1} \xi_{j_2} \cdots \xi_{j_p}$ and in the case $J(p)=\emptyset$ then $(\Pi({\vec \xi}))_{J(p)}=1$.
\end{theorem}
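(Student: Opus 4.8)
The plan is to derive the complete expansion by refining the integral-representation approach that produces the leading-order term~\eqref{eqn:rank2asymform}, systematically retaining the higher-order contributions that were previously discarded. The starting point is an exact integral representation for $({\vec H}_\alpha-{\vec H}_0)({\vec x})$ on the scaled inclusion $B$, obtained by subtracting the eddy current systems~\eqref{eqn:eddymodel} at $\alpha$ and at $\alpha=0$ and representing the difference through the free-space Green's function $G({\vec x},{\vec z})$ and its derivatives. After changing variables to ${\vec \xi}=({\vec x}-{\vec z})/\alpha$ so that the integration is over the fixed domain $B$, the essential step is a Taylor expansion in two places: first of $G({\vec x}, {\vec z}+\alpha{\vec \xi})$ in the source variable about ${\vec z}$, which generates the Hessian and all higher derivatives $({\vec D}_x^{2+m}G)_{[i,K(m+1)]}$ together with the monomial factors $(\Pi({\vec \xi}))_{K(m)}$ and the combinatorial prefactors $(-1)^m/m!$; and second of the background field ${\vec H}_0({\vec z}+\alpha{\vec \xi})$ about ${\vec z}$, which produces $({\vec D}_z^p({\vec H}_0({\vec z})))_{J(p+1)}$ and the factor $(\Pi({\vec \xi}))_{J(p)}/p!$. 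Truncating both expansions so that the total order of derivatives is at most $M-1$ yields the double sum $\sum_{m}\sum_{p}$ with $m+p\le M-1$.

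Next I would deal with the interior field. The difference of the solutions inside $B_\alpha$ must be expanded in a way consistent with the two Taylor truncations above. Because the source term in~\eqref{eqn:transproblemthetar2} is driven by ${\vec H}_0$ evaluated through ${\vec e}_j\times{\vec\xi}$, expanding ${\vec H}_0$ to order $p$ replaces the single source ${\vec e}_j\times{\vec\xi}$ by $(\Pi({\vec\xi}))_{J(p)}{\vec e}_j\times{\vec\xi}$; this is exactly the source appearing in the generalised transmission problem for ${\vec\theta}_{J(p+1)}$, and matching the interface data $-(p+2)[\mu^{-1}]_\Gamma({\vec n}\times{\vec e}_j(\Pi({\vec\xi}))_{J(p)})$ fixes the normalisation $(p+2)$. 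I would verify well-posedness of this generalised problem by the same variational argument (coercivity on the relevant space of divergence-free fields with the gauge condition $\int_\Gamma{\vec n}\cdot{\vec\theta}_{J(p+1)}=0$) that underpins the $p=0$ case. Substituting the expanded interior field back into the surface/volume integrals and collecting terms by the index multiplicities $(m,p)$ then reproduces the stated formulae for ${\mathfrak N}_{K(m+1)J(p+1)}$ and for the skew part, with the factors $1/(p+2)$ and $1/((m+1)!p!(p+2))$ emerging from the Green's-function integrations and from the $\frac12{\vec\xi}\times$ structure already present in~\eqref{eqn:definerank2tensor}; the single check records the contraction of the $\curl$/${\vec\xi}\times$ skew pair into a single index.

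The main obstacle will be the rigorous control of the residual ${\vec R}({\vec x})$ and the proof that $|{\vec R}({\vec x})|\le C\alpha^{3+M}\|{\vec H}_0\|_{W^{M+1,\infty}(B_\alpha)}$. This requires uniform-in-$\alpha$ energy estimates for the interior field difference at each truncation order, together with sharp bounds on the Taylor remainders of both $G$ and ${\vec H}_0$; the coupling between these two expansions, and the fact that $\nu=k\alpha^2=O(1)$ keeps the ${\im\omega\sigma_*\alpha^2}$ term from being negligible, means the estimates must be tracked carefully in the $W^{M+1,\infty}$ norm rather than discarded as lower order. Once the remainder is shown to be of order $\alpha^{3+M}$ uniformly, the tensor representation in terms of $\widecheck{\widecheck{\mathfrak M}}$ follows by grouping the collected terms exactly as in the reduction from the rank-4 form to $\widecheck{\widecheck{\mathcal M}}$ for the leading term, and checking that at $m=p=0$ the formulae collapse to~\eqref{eqn:definerank2tensor}, so that $\widecheck{\widecheck{\mathfrak M}}_{kj}=\widecheck{\widecheck{\mathcal M}}_{kj}$ and~\eqref{eqn:mainresult} recovers~\eqref{eqn:rank2asymform}.
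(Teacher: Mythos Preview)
Your proposal is correct and follows essentially the same route as the paper: a Stratton--Chu integral representation for $({\vec H}_\alpha-{\vec H}_0)({\vec x})$ split into two pieces, Taylor expansion of $G({\vec x},\cdot)$ and ${\vec H}_0(\cdot)$ about ${\vec z}$, introduction of the generalised transmission problems for ${\vec\theta}_{J(p+1)}$ together with higher-order energy estimates for the interior field to control the residual, and finally a two-step rank reduction via skew symmetry to obtain $\widecheck{\widecheck{\mathfrak M}}$. The paper makes explicit a few technical ingredients you leave implicit---an ``uncurling'' lemma to construct the approximating field ${\vec F}$ whose curl is the truncated Taylor series of $\nabla\times{\vec E}_0$, the vanishing of the $m=0$ contribution in one of the integral pieces via the transmission problem, and the use of $\mathrm{tr}({\vec D}_x^2 G)=0$ to kill one of the two terms arising from the double skew-symmetry reduction---but these are precisely the details you would encounter on executing your programme.
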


\begin{proof}
The expansion follows from the asymptotic formula in Theorem~\ref{thm:asympexpand}, and the results in Lemmas~\ref{lemma:reduction1}  and~\ref{lemma:reduction2} by noting that the coefficients of the rank $4+m+p$  tensor ${\mathfrak A}$ can be expressed in terms of  the coefficients of a rank $3+m+p$ and then a rank $2+m+p$  tensor by using the skew symmetry of their coefficients. Explicitly, we find that
{
\begin{align}
 {\mathfrak A}_{[[i,\ell, k,K(m)],J(p+1)]} = &  \varepsilon_{i k r} {\mathfrak C}_{[[r, \ell,K(m)],J(p+1)]} 
  = \varepsilon_{ \ell r s} \varepsilon_{i k r}  \widecheck{\mathfrak C}_{[[s,K(m) ],J(p+1)]} \nonumber \\
  = &  ( \delta_{\ell k} \delta_{s i} - \delta_{\ell i} \delta_{s k} )  \widecheck{\mathfrak C}_{[[s,K(m) ],J(p+1)]} =  \delta_{\ell k } \widecheck{\mathfrak C}_{[[i,K(m) ],J(p+1)]} - \delta_{\ell i} \widecheck{\mathfrak C}_{[[k,K(m) ],J(p+1)]} \nonumber \\
 = &  \delta_{\ell k } \widecheck{\mathfrak C}_{[[i,K(m) ],J(p+1)]} - \delta_{\ell i} \widecheck{\mathfrak C}_{K(m+1) J(p+1)} \nonumber ,
\end{align}}
where  $\varepsilon$ is as defined in (\ref{eqn:altensor}) and we have used $\varepsilon_{ \ell r s} \varepsilon_{i k r} =- \varepsilon_{ r\ell  s} \varepsilon_{r i k }=  \delta_{\ell k} \delta_{s i} - \delta_{\ell i} \delta_{s k} $.
Finally, by using 
\begin{equation}
\delta_{\ell  k} ({\vec D}_{ x}^{2+m} G({\vec x},{\vec z})  )_{[\ell ,{K}(m+1)]} =( {\vec D}_{ x}^{2+m} G({\vec x},{\vec z})  )_{[k,k,{K}(m)]} = ({\vec D}_{ x}^m ( {\vec D}_{ x}^2  G({\vec x},{\vec z}) )))_{[k,k,K(m)]} = 0,
\nonumber
\end{equation}
 since $ ({\vec D}_{ x}^2  G({\vec x},{\vec z}) )_{kk} = \hbox{trace} ({\vec D}_{ x}^2  G({\vec x},{\vec z}) )=0$, and by a term by term application of the above arguments, (\ref{eqn:mainresult}) is obtained.
\end{proof}

\begin{remark}
{Theorem~\ref{thm:main} extends the asymptotic expansion obtained by~\cite{ammarivolkov2013}, which provides the leading order term for $({\vec H}_\alpha - {\vec H}_0) ({\vec x} )$ as $\alpha \to 0$. We have previously shown in~\cite{ledgerlionheart2014} that this leading order term can be written in
the alternative form presented  in (\ref{eqn:rank2asymform}). In this case, $B$, $\alpha$, $\sigma_*$ and  $\mu_r=\mu_*/\mu_0$ are described by a complex symmetric rank 2 MPT $\widecheck{\widecheck{\mathcal M}}$, which is also a function of $\omega$. However, this description can only provide limited amounts of information about an object. Our new result reduces to this case when $M=1$.} {For $M=2$, our new result gives
\begin{align}
({\vec H}_\alpha - {\vec H}_0) ({\vec x} )_i =&  ({\vec D}_{ x}^2 G( {\vec x} , {\vec z}) )_{ik} \widecheck{\widecheck{\mathcal M}}_{kj} ( {\vec H}_0  ( {\vec z} ))_j +
({\vec D}_{ x}^2 G( {\vec x} , {\vec z} ))_{ik} \widecheck{\widecheck{\mathfrak M}}_{[k,[j, j_1]]} ({\vec D}_z( {\vec H}_0  ( {\vec z} )))_{[j,j_1]}\nonumber \\
& +( {\vec D}_{ x}^3 G( {\vec x} , {\vec z} ))_{[i,[k,k_1]]} \widecheck{\widecheck{\mathfrak M}}_{[[k,k_1],j]} ({\vec H}_0  ( {\vec z} ))_{j}+({\vec R}({\vec x}))_i , \label{eqn:rank3asymform}
\end{align}
with $|{\vec R}({\vec x})| \le C \alpha^{5}\| {\vec H}_0 \|_{W^{3,\infty}(B_\alpha)}$. In the above, $\widecheck{\widecheck{\mathfrak M}}_{[k,[j, j_1]]}   = -\widecheck{\mathfrak C}_{[k,[j, j_1]] } + {\mathfrak N}_{ [k,[j, j_1]]}$ where
\begin{subequations}
\begin{align} 
\widecheck{\mathfrak C}_{ [k,[j, j_1]]} =& - \frac{\im \nu \alpha^{4} }{6  } {\vec e}_k \cdot 
\int_B {\vec \xi} \times \left (    {\vec \theta}_{ [j,j_1] } + ({\vec \xi})_{j_1} {\vec e}_j \times {\vec \xi} \right ) \dif {\vec \xi} , \\
{\mathfrak N}_{[k,[j, j_1]] }  = &\alpha^{4} \left ( 1 - \frac{\mu_0}{\mu_*} \right )    {\vec e}_k \cdot 
 \int_B   \left ( \frac{1}{3} \nabla_\xi \times {\vec \theta}_{[j,j_1]} +({\vec \xi})_{j_1} {\vec e}_j  \right ) \dif {\vec \xi}.  
 \end{align}\label{eqn:rank3case1}
 \end{subequations}
Similarly, $\widecheck{\widecheck{\mathfrak M}}_{[[k,k_1], j]}= -\widecheck{\mathfrak C}_{[[k,k_1], j] } + {\mathfrak N}_{ [[k,k_1], j]}$ where 
\begin{subequations}
\begin{align} 
\widecheck{\mathfrak C}_{ [[k,k_1], j] } =&  \frac{\im \nu \alpha^{4} }{8} {\vec e}_k \cdot 
\int_B (  {\vec \xi})_{k_1}  {\vec \xi} \times \left (  {\vec \theta}_{j } +  {\vec e}_j \times {\vec \xi} \right ) \dif {\vec \xi} ,  \\
{\mathfrak N}_{[[k,k_1], j] }  = & -\alpha^4\left ( 1 - \frac{\mu_0}{\mu_*} \right )   {\vec e}_k \cdot 
 \int_B ( {\vec \xi})_{k_1}  \left ( \frac{1}{2} \nabla_\xi \times {\vec \theta}_j + {\vec e}_j  \right ) \dif {\vec \xi},
\end{align}\label{eqn:rank3case2}
\end{subequations}
and these can be computed using the solution of (\ref{eqn:transproblemthetar2}) already found for the computation of $ \widecheck{\widecheck{\mathcal M}}_{kj}$. For the computation of  (\ref{eqn:rank3case1}) the solution of 
\begin{align*}
\nabla_\xi \times \mu_*^{-1} \nabla_\xi \times {\vec \theta}_{ [j,j_1]} - \im \omega \sigma_* \alpha^2 {\vec \theta }_{ [j,j_1]} & = \im \omega \sigma_* \alpha^2( {\vec \xi})_{j_1} {\vec e}_j \times {\vec \xi}  && \text{in $B$ }, \\
\nabla_\xi \cdot {\vec \theta}_{ [j,j_1] }  = 0 , \qquad \nabla_\xi \times \mu_0^{-1} \nabla_\xi \times {\vec \theta}_{ [j,j_1] }  & = {\vec 0} && \text{in ${\mathbb R}^3 \setminus B$ } , \\
[{\vec n} \times {\vec \theta}_{ [j,j_1] }  ]_\Gamma &  = {\vec 0} && \text{on $\Gamma$},\\
  [{\vec n} \times \mu^{-1} \nabla_\xi \times {\vec \theta}_{[j,j_1]}   ]_\Gamma & = -3 [\mu^{-1 } ]_\Gamma ({\vec n} \times {\vec e}_j ( {\vec \xi})_{ j_1 } ) && \text{on $\Gamma$},\\
\int_\Gamma {\vec n} \cdot {\vec \theta}_{ [j,j_1] } \dif {\vec \xi} & = 0 ,\\
{\vec \theta}_{ [j,j_1] } & = O( | {\vec \xi} |^{-1}) && \text{as $|{\vec \xi} | \to \infty$ }, 
\end{align*}
is also required.} In the case of $M \to \infty$, Theorem~\ref{thm:main} provides a complete description of the field perturbation  $({\vec H}_\alpha - {\vec H}_0) ({\vec x} )$ caused by the presence of a permeable conducting object as $\alpha \to 0$. The object's shape and material properties in our new result are described by  $\widecheck{\widecheck{\mathfrak M}} $, which are GMPTs of increasing rank up to a maximum of $  M+1$ and are again functions of $B$, $\alpha$, $\sigma_*$, $\mu_r$ and $\omega$. By applying Theorem~\ref{thm:main} with $M > 1$,  $({\vec H}_\alpha - {\vec H}_0) ({\vec x} )$ can be more accurately described by including more of the higher order terms.  Complete asymptotic field expansions for small objects and GPTs and have previously been obtained for the EIT problem, acoustics and elasticity~\cite{ammarikang2003,ammarikanghelm,ammarikangbook}. But, to the best of the authors' knowledge, this is the first time they have been obtained for a Maxwell problem. {Like in~\cite{ammarivolkov2013}, our analysis makes the assumption that $B$ has a smooth boundary. The extension of the analysis to non-smooth boundaries will form part of our future work. However, numerical evidence from computing ${\mathcal M}$ for objects with edges indicates that our results are also likely to hold for such objects~\cite{ammarivolkov2013b,ledgerlionheart2014,ledgerlionheart2016}.}
\end{remark}

\begin{remark} \label{remark:objectcharacterisation}
To be able to characterise an unknown conducting permeable object {from measurements of  $({\vec H}_\alpha - {\vec H}_0)({\vec x})$}, using Theorem~\ref{thm:main}, a range of alternative approaches are possible, which include adapting the algorithms described by Ammari and Kang~\cite{ammarikanglecturenotes} for the EIT problem or  using a statistical classifier~\cite{ammarivolkov2013b}. In the latter case, we assume that we have a set of possible candidate objects and we follow Ammari and Kang~\cite[pg. 80]{ammarikanglecturenotes} to put these in canonical form such that the description $B_\alpha = \alpha B + {\vec z}$, for each object, implies that the origin for ${\vec \xi}$ coincides with the object's centre of mass and that the determinant of the P\'oyla-Szeg\"o tensor associated with $B$ (i.e. $ {\mathcal T}(\mu_r)[B]$ for $\mu_r\ne 1$ and $ {\mathcal T}(0)[B]$ for $\mu_r= 1$), it is equal to 1~\cite[pg. 80]{ammarikanglecturenotes}. {In an off-line stage}, the coefficients of $\widecheck{\widecheck{\frak M}}$ are then computed numerically for these objects  for a range of frequencies $\omega$ by solving the transmission problem for  $ {\vec \theta}_{{J} (p+1)} $ {to form a dictionary}. {In an on--line stage}, the unknown object's position ${\vec z}$ can be found by rotating the candidate objects {in the dictionary} (and hence their $\widecheck{\widecheck{\mathcal M}}$) and determining the best statistical fit for ${\vec z}$ by using measurements of $({\vec H}_\alpha-{\vec H}_0)({\vec x})$ and~(\ref{eqn:rank2asymform}). To find the unknown object's size,  it may be necessary to {ensure the dictionary includes $\widecheck{\widecheck{\mathcal M}}$ computed} at very small $\omega$, or frequencies at the limit of the eddy current model, such that $\widecheck{\widecheck{\mathcal M}}$ reduces to a suitably parameterised P\'oyla-Szeg\"o tensor for the candidate objects~\cite{ledgerlionheart2016}, and then repeat the above process to find the best fit for $\alpha$. To determine further geometrical and material parameter information, measurements of $({\vec H}_\alpha-{\vec H}_0)({\vec x})$ will be compared against Theorem~\ref{thm:main}  by using the known ${\vec z}$ and rotating the candidate objects {in the dictionary} (and hence their  $\widecheck{\widecheck{\frak M}}$) to find the best fit.
\end{remark}

\begin{remark}
{Currently, in practical magnetic induction metal detection, rather than solving (\ref{eqn:eddymodel}) with $\alpha=0$ to obtain the true background magnetic field, engineers frequently approximate the field 
 at a position ${\vec x}$ obtained from a small coil centred at ${\vec y}$ as that of a magnetic dipole}
\begin{equation}
{\vec H}_0({\vec x})_i  \approx {\vec D}^2 G({\vec x},{\vec y})_{ij}{\vec m}_j^e \label{eqn:dipoleapprox},
\end{equation}
where ${\vec m}_j^e$ is a constant vector that is a function of the coil's dimensions and the current flowing in it. However, {in walk through metal detectors, where there is little space between the coil arrays and the person being tested for a security threat, this does not provide an accurate representation of the field as the coils dimensions are no longer small compared to $|{\vec x}-{\vec y}|$ and the background field can vary considerably over the object. Engineers also assume that measurement coil, if treated as an emitter, will act as a dipole source. This means that for a single emitter--measurement coil arrangement the induced voltage
in a measurement coil located at ${\vec x}$ would be of the form of the leading order term for $({\vec H}_\alpha-{\vec H}_0)({\vec x})$~\cite{ledgerlionheart2016}
\begin{align}
({\vec m}^m)_i ({\vec H}_\alpha - {\vec H}_0) ({\vec x})_i \approx ({\vec m}^m)_i ( {\vec D}^2 G({\vec x},{\vec z}))_{ij} \widecheck{\widecheck{\mathcal M}}_{jk} ( {\vec D}^2 G({\vec z},{\vec y}))_{k\ell} ({\vec m}^e)_\ell . \label{eqn:rank2approx}
\end{align}
However, again the assumption of a dipole field for the measurement coil breaks down for walk through metal detectors and similar problems also exist in subsurface metal detection, when a metallic object is buried close to the surface. }

{Theorem~\ref{thm:main} can improve the  characterisation of hidden objects in magnetic induction metal detection by the following: Instead of (\ref{eqn:dipoleapprox})
the system  (\ref{eqn:eddymodel}) should be solved with $\alpha=0$ to obtain the true ${\vec H}_0({\vec x})$;  Rather than  (\ref{eqn:rank2approx})  $({\vec H}_\alpha - {\vec H}_0) ({\vec x} )$ should be integrated over an appropriate volume~\cite{ledgerlionheart2018} to obtain the correct induced voltage; Instead of just using the leading order term for $({\vec H}_\alpha-{\vec H}_0)({\vec x})$ as $\alpha \to 0$, and an object characterisation using $\widecheck{\widecheck{\mathcal M}}$, more
 terms in (\ref{eqn:mainresult}) should be used, and an object characterised by $\widecheck{\widecheck{\frak M}}$. Furthermore, object location and identification can then be improved by using  the approach described in Remark~\ref{remark:objectcharacterisation}. 
}
\end{remark}

\section{Derivation of the asymptotic formula} \label{sect:derive}

\subsection{Eliminating the current source}

We will build on~\cite{ammarivolkov2013,ledgerlionheart2014}, but, {in order to give a physical interpretation,} it is first instructive to rewrite the original problem described in Section~\ref{sect:problemconfig} with $ \alpha=0$ and $\alpha \ne0$ as transmission problems and to eliminate the current source. To do this, we note that in absence of an object, ${\vec H}_0 = \mu_0^{-1} \nabla \times {\vec A}_0$ and ${\vec A}_0$ solves
\begin{subequations} 
\begin{align}
\nabla \times \mu_0^{-1} \nabla \times {\vec A}_0 & = {\vec J}_0 && \text{in ${\mathbb R}^3$ } , \\
\nabla \cdot {\vec A}_0 & = 0 && \text{in ${\mathbb R}^3$},  \\
{\vec A}_0 & = O( | {\vec x} |^{-1}) && \text{as $|{\vec x} | \to \infty$ } .
\end{align}\label{eqn:transproblema0}
\end{subequations} 
Then, in the presence of the object, 
we can write  ${\vec H}_\alpha = \mu_\alpha^{-1} \nabla \times {\vec A}_\alpha$, ${\vec E}_\alpha = \im \omega {\vec A}_\alpha$ where, after appropriate gauging, ${\vec A}_\alpha$ solves
\begin{subequations} 
\begin{align}
\nabla \times \mu_*^{-1} \nabla \times {\vec A}_\alpha - \im \omega \sigma_* {\vec A}_\alpha & = {\vec 0} && \text{in $B_\alpha$ }, \\
\nabla \cdot {\vec A}_\alpha  = 0 , \ \nabla \times \mu_0^{-1} \nabla \times {\vec A}_\alpha  & = {\vec J}_0 && \text{in ${\mathbb R}^3 \setminus B_\alpha$ }, \\
[{\vec n} \times {\vec A}_\alpha]_{\Gamma_\alpha} = {\vec 0},  \ [{\vec n} \times \mu_\alpha^{-1} \nabla \times {\vec A}_\alpha ]_{\Gamma_\alpha} & = {\vec 0} && \text{on $\Gamma_\alpha := \partial B_\alpha$},\\
\int_{\Gamma_\alpha} {\vec n} \cdot {\vec A}_\alpha |_+ \dif {\vec x}  & = 0 , \\
{\vec A}_\alpha & = O( | {\vec x} |^{-1}) && \text{as $|{\vec x} | \to \infty$ }.
\end{align}\label{eqn:transproblemalpha}
\end{subequations}
Next, introducing ${\vec \xi} : = \displaystyle \frac{{\vec x} - {\vec z}}{\alpha}$, $ {\vec A}_\Delta ({\vec \xi}) : =\alpha  ({\vec A}_\alpha - {\vec A}_0)\left ( \displaystyle \frac{{\vec x}-{\vec z}}{\alpha} \right )  $ and rescaling we see that  $ {\vec A}_\Delta$  solves the following transmission problem
\begin{subequations}
\begin{align}
\nabla_\xi \times \mu_*^{-1} \nabla_\xi \times {\vec A}_\Delta - \im \omega \sigma_* \alpha^2 {\vec A}_\Delta & =  \im \omega \sigma_* \alpha   {\vec A}_0(  {\vec x} )  && \text{in $B$ } ,\\
\nabla_\xi \cdot {\vec A}_\Delta  = 0 , \qquad \nabla_\xi \times \mu_0^{-1} \nabla_\xi \times {\vec A}_\Delta  & = {\vec 0} && \text{in ${\mathbb R}^3 \setminus B$ } ,\\
[{\vec n} \times {\vec A}_\Delta]_\Gamma = {\vec 0},  \qquad [{\vec n} \times \mu^{-1} \nabla_\xi  \times {\vec A}_\Delta ]_\Gamma & = - [{\vec n} \times \mu^{-1}  \nabla_x \times {\vec A}_0( {\vec x}  ) ]_\Gamma && \text{on $\Gamma:= \partial B$},\\
\int_{\Gamma} {\vec n} \cdot {\vec A}_\Delta |_+  \dif {\vec \xi}  & = 0 , \\
{\vec A}_\Delta & = O( | {\vec \xi} |^{-1}) && \text{as $|{\vec \xi} | \to \infty$ },
\end{align} \label{eqn:transproblem}
\end{subequations}
where the current source no longer appears and, instead, is replaced by source terms in (\ref{eqn:transproblem}a) and (\ref{eqn:transproblem}c). {Electrical engineers would call ${\vec A}_\Delta ({\vec \xi})$ a scaled reduced vector potential.}
We now need to represent a polynomial vector field as the curl of another; we call this an {\em uncurling formula}.

\subsection{Uncurling formula}

\begin{lemma} \label{lemma:uncurl}
Given a smooth divergence free polynomial vector field in the form
\begin{align}
{\vec s}({\vec x}) = \sum_{p=0}^P \frac{1}{p!} ( {\vec D}_z^p ({\vec s})({\vec z}))_{ J(p+1)} (\Pi({\vec x}-{\vec z}))_{J(p) } {\vec e}_j   \label{eqn:curlform},
\end{align}
where $ \displaystyle( {\vec D}_z^p ( {\vec s} ) ( {\vec z} ) )_ { J(p+1)  } := \left  (\prod_{\ell=1}^p \partial_{z_{j_\ell}} \right )({\vec s} ({\vec z}) \cdot  {\vec e}_j ) =  \partial_{z_{j_1}} \partial_{z_{j_2}} \cdots \partial_{z_{j_p}} (
{\vec s}  ({\vec z})  \cdot  {\vec e}_j ) $ the field
\begin{align}
{\vec t}({\vec x}) = \sum_{p=0}^P \frac{1}{p!(p+2)} ( {\vec D}_z^p ({\vec s})({\vec z}))_{ J(p+1)} (\Pi({\vec x}-{\vec z}))_{J(p) } {\vec e}_j\times ({\vec x}- {\vec z})\label{eqn:uncurlform} ,
\end{align}
satisfies ${\vec s} = \nabla_x \times {\vec t}$.
\end{lemma}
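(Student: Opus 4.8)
The plan is to verify directly that $\nabla_x \times {\vec t} = {\vec s}$ by differentiating the explicit expression~(\ref{eqn:uncurlform}) term by term, exploiting both the divergence-free hypothesis on ${\vec s}$ and a standard curl identity for the field $(\Pi({\vec x}-{\vec z}))_{J(p)} {\vec e}_j \times ({\vec x}-{\vec z})$. Since the summation over $p$ is finite and the operations are linear, it suffices to fix a single $p$ and a single multi-index $J(p+1)=[j,J(p)]$, compute $\nabla_x \times \bigl( (\Pi({\vec x}-{\vec z}))_{J(p)} {\vec e}_j \times ({\vec x}-{\vec z}) \bigr)$, and then check that the contraction against the coefficients $(\tfrac{1}{p!(p+2)}{\vec D}_z^p({\vec s})({\vec z}))_{J(p+1)}$ reproduces the corresponding term of~(\ref{eqn:curlform}).

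First I would set ${\vec r} := {\vec x}-{\vec z}$ and abbreviate $\phi := (\Pi({\vec r}))_{J(p)} = \prod_{\ell=1}^p r_{j_\ell}$, so the generic term to be curled is ${\vec v} := \phi\,({\vec e}_j \times {\vec r})$. Using the product rule $\nabla_x \times (\phi\,{\vec w}) = \phi\,(\nabla_x \times {\vec w}) + (\nabla_x \phi)\times {\vec w}$ with ${\vec w} = {\vec e}_j \times {\vec r}$, I would invoke the two elementary facts $\nabla_x \times ({\vec e}_j \times {\vec r}) = 2{\vec e}_j$ (since $\nabla_x\cdot{\vec r}=3$ and $({\vec e}_j\cdot\nabla_x){\vec r}={\vec e}_j$) and $(\nabla_x\phi)\times({\vec e}_j\times{\vec r}) = {\vec e}_j\,(\nabla_x\phi\cdot{\vec r}) - {\vec r}\,(\nabla_x\phi\cdot{\vec e}_j)$ via the BAC--CAB rule. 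The homogeneity relation $\nabla_x\phi\cdot{\vec r} = p\,\phi$ (Euler's identity for the degree-$p$ monomial $\phi$) then collapses the first piece, yielding
\begin{equation}
\nabla_x \times {\vec v} = (p+2)\,\phi\,{\vec e}_j - {\vec r}\,(\partial_{x_k}\phi)\,\delta_{kj} \nonumber,
\end{equation}
where I have written $\nabla_x\phi\cdot{\vec e}_j = \partial_{x_j}\phi$. The factor $(p+2)$ is precisely what the normalising denominator $p!(p+2)$ in~(\ref{eqn:uncurlform}) is designed to cancel.

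Next I would contract this with the coefficient tensor and sum. The first piece gives back exactly $\sum_p \tfrac{1}{p!}({\vec D}_z^p({\vec s})({\vec z}))_{J(p+1)}\phi\,{\vec e}_j$, which is~(\ref{eqn:curlform}), so the claim reduces to showing that the contribution of the second piece, namely $-\sum_p \tfrac{1}{p!(p+2)}({\vec D}_z^p({\vec s})({\vec z}))_{J(p+1)}\,{\vec r}\,(\partial_{x_j}\phi)$, vanishes. The hard part will be confirming that this leftover term is killed by the divergence-free hypothesis. Here I would unwind $\partial_{x_j}\phi = \partial_{x_j}(\Pi({\vec r}))_{J(p)}$, which by the Leibniz rule equals a sum over $\ell$ of monomials with the index $j_\ell$ deleted and forced to equal $j$; after relabelling, the contraction $({\vec D}_z^p({\vec s})({\vec z}))_{[j,J(p)]}\,\partial_{x_j}(\Pi({\vec r}))_{J(p)}$ reorganises into a sum of terms each carrying a factor $\partial_{z_{j_\ell}}\partial_{z_j}(\cdots)$ contracted against ${\vec e}_j$, i.e.\ a factor of $\partial_{z_j}({\vec s}({\vec z})\cdot{\vec e}_j) = \nabla_z\cdot{\vec s}({\vec z})$, which is zero by hypothesis. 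The only genuine subtlety is the bookkeeping of which derivative index is contracted with the spatial index $j$ of ${\vec e}_j$ versus the monomial indices $j_1,\dots,j_p$; once the symmetry of mixed partials $\partial_{z_{j_\ell}}\partial_{z_j}=\partial_{z_j}\partial_{z_{j_\ell}}$ is used to expose the divergence, every term in the second piece vanishes identically, completing the proof.
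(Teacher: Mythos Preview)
Your proof is correct and follows essentially the same route as the paper: a direct term-by-term computation of the curl, using the divergence-free hypothesis to kill the unwanted piece. The only cosmetic difference is packaging: the paper bundles the coefficient tensor into a vector ${\vec u}$ with $({\vec u})_j = \tfrac{1}{p!(p+2)}({\vec D}_z^p({\vec s})({\vec z}))_{J(p+1)}(\Pi({\vec x}-{\vec z}))_{J(p)}$ and applies the full identity $\nabla\times({\vec u}\times{\vec v})$ with ${\vec v}={\vec x}-{\vec z}$, so that the divergence-free condition appears directly as $\nabla\cdot{\vec u}=0$; you instead fix the indices, use the scalar--vector product rule and BAC--CAB, invoke Euler's identity for the factor $p+2$, and only see the divergence vanish after contracting the leftover $-{\vec r}\,\partial_{x_j}\phi$ against the coefficients.
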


\begin{proof}
We consider the $p$th term in (\ref{eqn:uncurlform}) and apply the standard identity $\nabla \times({\vec u} \times {\vec v}) = {\vec u} \nabla \cdot {\vec v} - {\vec v} \nabla \cdot {\vec u} + ({\vec v} \cdot \nabla) {\vec u} -  ({\vec u} \cdot \nabla ){\vec v}$ where the differentiation is with respect to ${\vec x}$ and
\begin{equation}
({\vec u})_j = \frac{1}{p!(p+2)} ({\vec D}_z^p ({\vec s}({\vec z})))_{{J}(p+1) }(\Pi({\vec x } - {\vec x} ))_{J(p)}, \qquad ({\vec v})_j = ({\vec x}-{\vec z})_j \nonumber .
\end{equation}
It is obvious that $\nabla \cdot {\vec v}=3$, $(\nabla {\vec v})_{ji} = \delta_{ji}$ and we can deduce 
\begin{align}
\nabla \cdot {\vec u} =  &  \frac{1}{p! (p+2) } \left (  ( {\vec D}_z^p ({\vec s}({\vec z})))_{ {J}(p+1) }  \left (  \delta_{j_1 j} (x_{j_2}-z_{j_2})\cdots  (x_{j_p}-z_{j_p})  + \cdots + \right . \right . \nonumber \\
& \left . \left  .  (x_{j_1}-z_{j_1})  (x_{j_2}-z_{j_2})\cdots  (x_{j_{p-1}}-z_{j_{p-1}})\delta_{j_p j} \right ) \right ) \nonumber \\
 =  &  \frac{1}{p!(p+2) }  \left ( ({\vec D}_z^p ({\vec s}({\vec z})))_{[ j, j , j_2, j_3,\cdots , j_p ]}    (x_{j_2}-z_{j_2}) (x_{j_3}-z_{j_3}) \cdots  (x_{j_p}-z_{j_p} ) + \cdots + \right .  \nonumber \\
 &    \left . ({\vec D}_z^p ({\vec s} ({\vec z})))_{[ j, j_1 , j_2, j_3,\cdots ,j_{p-1}, j ]}   (x_{j_1}-z_{j_1})   (x_{j_2}-z_{j_2}) \cdots  (x_{j_{p-1}}-z_{j_{p-1}}) \right )  = 0,  \nonumber
\end{align}
by interchanging the order of differentiation of ${\vec s}$ (e.g. $ ({\vec D}_z^p ({\vec s}( {\vec z})))_{[ j, j_1 , j_2, j_3,\cdots ,j_{p-1}, j ]} =\\ ({\vec D}_z^p ({\vec s} ({\vec z})))_{[ j,j, j_1 , j_2, j_3,\cdots ,j_{p-1} ]} $) and noting the repeated $j$ index, which imples $({\vec D}_z({\vec s} ({\vec z})))_{jj} = \text{tr} ( {\vec D}_z({\vec s} ({\vec z}))) = \nabla_z \cdot {\vec s}({\vec z}) =0$. Note also 
\begin{align}
(\nabla  {\vec u})_{ji} = &   \frac{1}{p!(p+2)}  \left ( ({\vec D}_z^p ({\vec s} ({\vec z})))_{{J}(p+1) }  \left (  \delta_{j_1 i} (x_{j_2}-z_{j_2})\cdots  (x_{j_p}-z_{j_p})  + \cdots + \right . \right .  \nonumber \\
& \left . \left . (x_{j_1}-z_{j_1})  (x_{j_2}-z_{j_2})\cdots  (x_{j_{p-1}}-z_{j_{p-1}})\delta_{j_p i} \right ) \right ) , \nonumber 
\end{align}
so that
\begin{align}
( ({\vec v} \cdot \nabla ){\vec u})_j = & (x_i-z_i)  \frac{1}{p!(p+2)} ({\vec D}_z^p ({\vec s}({\vec z})))_{{J}(p+1) }  \left (  \delta_{j_1 i} (x_{j_2}-z_{j_2})\cdots  (x_{j_p}-z_{j_p})  + \cdots + \right . \nonumber \\
& \left . (x_{j_1}-z_{j_1})  (x_{j_2}-z_{j_2})\cdots  (x_{j_{p-1}}-z_{j_{p-1}})\delta_{j_p i} \right ) \nonumber \\
= & \frac{p}{p!(p+2)} ({\vec D}_z^p ({\vec s} ({\vec z})))_{{J}(p+1) } (\Pi({\vec x } - {\vec z}) )_{J(p)} . \nonumber
\end{align}
Thus,
\begin{align}
(\nabla \times({\vec u} \times {\vec v}))_j =  & \frac{1}{p!} ({\vec D}_z^p ({\vec s} ({\vec z})))_{{J}(p+1) } (\Pi({\vec x } - {\vec z}) )_{J(p)}, \nonumber
\end{align}
and, by a term by term application of the above arguments, (\ref{eqn:curlform}) immediately follows.
\end{proof}

\begin{corollary}
An immediate consequence of Lemma~\ref{lemma:uncurl} and the smoothness of the divergence free $ \nabla \times {\vec E}_0 = \im \omega \mu_0 {\vec H}_0$ in $B_\alpha$ is that we can introduce a vector field ${\vec F} ({\vec x})$ as
\begin{align}
{\vec F} ({\vec x}): = &   \sum_{p=0}^P \frac{1}{p! ( p+2)} ({\vec D}_z^p (\nabla_z \times {\vec E}_0({\vec z})))_{{J}(p+1)} (\Pi({\vec x} - {\vec z}))_{J(p)} {\vec e}_j \times ( {\vec x} - {\vec z} ) ,\label{eqn:definef} 
\end{align}
where ${J}(p+1)=[j,J(p)]$ and whose curl is the polynomial vector field
\begin{align}
\nabla \times{\vec F} = &   \sum_{p=0}^P \frac{1}{p!} ({\vec D}_z^p (\nabla_z \times {\vec E}_0({\vec z})))_{ {J}(p+1)} (\Pi({\vec x} - {\vec z}))_{J(p)} {\vec e}_j  ,\label{eqn:definecurlf}
\end{align}
which is also the $P$th order Taylor series expansion of  $ \nabla \times {\vec E}_0 $ about ${\vec z}$ for $|{\vec x}-{\vec z}| \to 0$. Note that (\ref{eqn:definef}) and (\ref{eqn:definecurlf}) generalise the expressions for ${\vec F}({\vec x})$  and $ \nabla \times{\vec F}$ stated in~\cite{ammarivolkov2013}, which are for the case of $P=1$.
\end{corollary}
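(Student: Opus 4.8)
The plan is to read the Corollary as a direct specialisation of Lemma~\ref{lemma:uncurl} to the choice ${\vec s} := \nabla \times {\vec E}_0$, so that ${\vec F}$ in (\ref{eqn:definef}) is exactly the field ${\vec t}$ produced by the uncurling formula (\ref{eqn:uncurlform}). To do this I must check that the degree-$P$ Taylor polynomial of $\nabla \times {\vec E}_0$ about ${\vec z}$ is a smooth divergence-free polynomial of the form (\ref{eqn:curlform}); the conclusion $\nabla \times {\vec F} = $~(\ref{eqn:definecurlf}) then follows immediately, and the identification of (\ref{eqn:definecurlf}) as the Taylor polynomial is the second assertion of the statement.

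Since ${\vec z}$ lies away from the support of ${\vec J}_0$, the background fields solve the source-free eddy current equations in a neighbourhood of ${\vec z}$ and are smooth there, so $\nabla \times {\vec E}_0 = \im \omega \mu_0 {\vec H}_0$ possesses a Taylor expansion about ${\vec z}$. Written in the index/summation notation of the paper, Taylor's theorem reads
\[
(\nabla \times {\vec E}_0)({\vec x}) = \sum_{p=0}^P \frac{1}{p!}\left({\vec D}_z^p (\nabla_z \times {\vec E}_0)({\vec z})\right)_{{J}(p+1)} (\Pi({\vec x}-{\vec z}))_{J(p)} {\vec e}_j + o(|{\vec x}-{\vec z}|^P),
\]
where $({\vec D}_z^p \cdots)_{{J}(p+1)}$ collects the ordered $p$-fold partial derivatives of the $j$th component and the factor $1/p!$ accounts for summing over all orderings of the tuple $J(p)$. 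This displays the polynomial (\ref{eqn:definecurlf}) as the degree-$P$ Taylor polynomial $T_P[\nabla \times {\vec E}_0]$, settling the Taylor-series claim and showing that ${\vec s} := T_P[\nabla \times {\vec E}_0]$ has precisely the form (\ref{eqn:curlform}) with coefficients $({\vec D}_z^p(\nabla_z \times {\vec E}_0)({\vec z}))_{{J}(p+1)}$.

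The one hypothesis of Lemma~\ref{lemma:uncurl} that requires genuine checking is that this ${\vec s}$ is divergence-free \emph{as a polynomial}, not merely at ${\vec z}$, since the proof of the lemma uses the vanishing of every derivative of $\nabla \cdot {\vec s}$. I would establish this by noting that differentiation carries the degree-$P$ Taylor polynomial of a field to the degree-$(P-1)$ Taylor polynomial of its derivative, whence $\nabla_x \cdot T_P[\nabla \times {\vec E}_0] = T_{P-1}[\nabla \cdot (\nabla \times {\vec E}_0)]$; because the divergence of a curl vanishes identically this is $T_{P-1}[0] = {\vec 0}$, so the truncated Taylor polynomial inherits the divergence-free property. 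With smoothness, the form (\ref{eqn:curlform}), and divergence-freeness all verified, applying Lemma~\ref{lemma:uncurl} to ${\vec s} = T_P[\nabla \times {\vec E}_0]$ yields a field ${\vec t}$ of the form (\ref{eqn:uncurlform}) with ${\vec s} = \nabla_x \times {\vec t}$; since truncation preserves all derivatives of $\nabla \times {\vec E}_0$ up to order $P$, the coefficients of ${\vec t}$ coincide with $({\vec D}_z^p(\nabla_z \times {\vec E}_0)({\vec z}))_{{J}(p+1)}$, so ${\vec t}$ is exactly ${\vec F}$ of (\ref{eqn:definef}) and $\nabla \times {\vec F}$ is (\ref{eqn:definecurlf}). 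The main obstacle is therefore not computational but conceptual bookkeeping: confirming that divergence-freeness survives Taylor truncation and that the paper's tuple notation matches the coefficients in Lemma~\ref{lemma:uncurl}.
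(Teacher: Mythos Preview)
Your proposal is correct and matches the paper's intent. The paper offers no separate proof of this corollary: it is stated as an ``immediate consequence of Lemma~\ref{lemma:uncurl} and the smoothness of the divergence free $\nabla \times {\vec E}_0$,'' and your write-up simply makes explicit the two checks that justify that phrase---that the degree-$P$ Taylor polynomial of $\nabla\times{\vec E}_0$ has the form~(\ref{eqn:curlform}) and that it inherits the divergence-free property because $\nabla_x\cdot T_P[\nabla\times{\vec E}_0]=T_{P-1}[\nabla\cdot(\nabla\times{\vec E}_0)]=0$.
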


Furthermore, {a physical interpretation is helped} by writing ${\vec x} = \alpha {\vec \xi} + {\vec z}$ and constructing  
\begin{equation}
{\vec A}_0(\alpha {\vec \xi} + {\vec z}) = \mu_0 \sum_{p=0}^\infty \frac{\alpha^{1+p} }{p!(p+2)} ({\vec D}_z^p ({\vec H}_0({\vec z})))_{ {J}(p+1) } ( \Pi({\vec \xi }) )_{J(p)} {\vec e}_j \times {\vec \xi},  \label{eqn:expanda0full}
\end{equation}
in $B_\alpha$ such that
\begin{equation}
\nabla \times {\vec A}_0 (\alpha {\vec \xi} +{\vec z} ) = \mu_0  { \vec H}_0 (\alpha {\vec \xi} +{\vec z} ) = \mu_0 \sum_{p=0}^\infty \frac{\alpha^p }{p!} ({\vec D}_z^p ({\vec H}_0({\vec z})))_{ {J}(p+1)} (\Pi({\vec \xi})  )_{J(p)} {\vec e}_j \label{eqn:expandb0full} .
\end{equation}
\subsection{Accuracy of Taylor series approximations}
The smoothness of ${\vec H}_0= \frac{1}{\im \omega \mu_0} \nabla \times {\vec E}_0$  in $B_\alpha$ enables us to deduce that
\begin{equation}
\left \| \im \omega \mu_0 {\vec H}_0 ({\vec x})- \nabla \times {\vec F} 
 \right \|_{L^\infty ( B_\alpha)}
\le C \alpha^{1+P} \|  \nabla \times { \vec E}_0 \|_{W^{{P+1},\infty}(B_\alpha)}  \label{eqn:h0_curlflinf},
\end{equation}
where, here and throughout the following, the constant $C$ is independent of $\alpha$. Note that in the case of $P=1$  (\ref{eqn:h0_curlflinf}) is analogous to the bound (3.6) in~\cite{ammarivolkov2013}. It also follows from  (\ref{eqn:h0_curlflinf}) that
\begin{align}
\left \| \im \omega \mu_0{ \vec H}_0 ({\vec x})-  \nabla \times {\vec F}
 \right \|_{L^2 ( B_\alpha)}
\le &C \alpha^{\frac{3}{2}} \left \| \im \omega \mu_0 {\vec H}_0 ({\vec x})- \nabla \times {\vec F} \right \|_{L^\infty ( B_\alpha)}
\nonumber \\
\le & C \alpha^{\frac{5+2P}{2}} \| \nabla \times { \vec E}_0 \|_{W^{{P+1},\infty}(B_\alpha)} \label{eqn:curlh0_curlfl2}.
\end{align}

\subsection{Higher order energy estimates}

We follow the notation of ~\cite{ammarivolkov2013} and define
\begin{align}
{\vec X}_\alpha ({\mathbb R}^3) : =  &\left \{ {\vec u} : \frac{\vec u}{\sqrt{ 1+ |{\vec x}|^2 }}\in L^2({\mathbb R}^3 )^3 , \nabla \times {\vec u} \in  L^2({\mathbb R}^3 )^3, \nabla \cdot {\vec u} =0 \text{ in $B_\alpha^c$} \right \} ,\nonumber \\
\tilde{\vec X}_\alpha ({\mathbb R}^3) : = &  \left \{ {\vec u} : {\vec u} \in {\vec X}_\alpha ({\mathbb R}^3) , \ \int_{\Gamma_\alpha} {\vec u} \cdot {\vec n} |_+ \dif {\vec x} = 0 \right \} \nonumber ,
\end{align}
where $B_\alpha^c := {\mathbb R}^3 \setminus B_\alpha$. Using ${\vec E}_\alpha = \im \omega {\vec A}_\alpha$, the weak solution of (\ref{eqn:transproblemalpha}) can be written as: Find ${\vec E}_\alpha \in  \tilde{\vec X}_\alpha ({\mathbb R}^3) $ such that
\begin{align}
a_\alpha ({\vec E}_\alpha, {\vec v}) =\im \omega ({\vec J}_0,{\vec v})_{B_\alpha^c} \qquad \forall {\vec v}\in\tilde{\vec X}_\alpha ({\mathbb R}^3),
\end{align}
where
\begin{align}
a_\alpha ({\vec u},  {\vec v}) := ( \mu_\alpha^{-1} \nabla \times {\vec u}, \nabla \times {\vec v})_{{\mathbb R}^3} - \im \omega  (\sigma_\alpha {\vec u},{\vec v})_{B_\alpha} \nonumber,
\end{align}
and $(,)_D $ stands for the $L^2$ inner product on the domain $D\subseteq {\mathbb R}^3$. The weak solution of (\ref{eqn:transproblema0}) for ${\vec E}_0 = \im \omega {\vec A}_0$ is easily found and it can be shown that~\cite{ammarivolkov2013}
\begin{align}
 ( \mu_\alpha^{-1} \nabla& \times ( {\vec E}_\alpha - {\vec E}_0 ) ,  \nabla \times {\vec v})_{{\mathbb R}^3}  - \im \omega ( \sigma_\alpha ({\vec E}_\alpha -{\vec E}_0), {\vec v})_{B_\alpha}  = \nonumber \\
 &(\mu_0^{-1} - \mu_*^{-1} ) ( \nabla \times{\vec E}_0, \nabla \times {\vec v} )_{B_\alpha}  + \im \omega ( \sigma_\alpha {\vec E}_0, {\vec v})_{B_\alpha} \qquad \forall {\vec v}\in {\vec X}_\alpha ({\mathbb R}^3). \label{eqn:diffelphae0}
\end{align}
In a departure from~\cite{ammarivolkov2013}, we  define ${\vec w} \in \tilde{\vec X}_\alpha ( {\mathbb R}^3 )$ as the weak solution to
\begin{align}
a_\alpha ({\vec w}, {\vec v}) = &\im \omega \mu_0 ( \mu_0^{-1} - \mu_*^{-1} ) \left  ( {\sum_{p=0}^P \frac{1 }{p!} ({\vec D}_z^p ({\vec H}_0({\vec z})))_{{J}(p+1)} (\Pi ( {\vec x}- {\vec z})  )_{J(p)} {\vec e}_j , \nabla \times {\vec v}} \right )_{B_\alpha} \nonumber \\
&+ \im \omega ( \sigma_\alpha {\vec F},{\vec v})_{B_\alpha} \qquad
\forall {\vec v} \in \tilde{\vec X}_\alpha ({\mathbb R}^3), \label{eqn:bilineara}
\end{align}
and, if  we compare the above with their (3.7), we see it reduces to the latter for $P=1$ and also find that their
 Lemma 3.2 generalises to:

\begin{lemma}\label{lemma:revised32}
There exists a constant $C$  such that
\begin{align}
\left \| \nabla \times \left ( {\vec E}_\alpha - {\vec E}_0 -  {\vec w}  \right ) \right \|_{L^2(B_\alpha)} \le C &
 \left ( | 1 - \mu_r^{-1} | + \nu  \right ) \alpha^{\frac {5+2P}{2}} \| \nabla \times { \vec E}_0 \|_{W^{{P+1},\infty}(B_\alpha)} , \label{eqn:lemma:revised32a}\\ 
\left \|  {\vec E}_\alpha - {\vec E}_0 -\nabla \phi_0  - {\vec w}    \right \|_{L^2(B_\alpha)} \le C &
 \left ( | 1 - \mu_r^{-1} | + \nu  \right ) \alpha^ {\frac{7+2P}{2}} \| \nabla \times { \vec E}_0 \|_{W^{{P+1},\infty}(B_\alpha)}\label{eqn:lemma:revised32b} ,
\end{align}
where $\mu_r :=\mu_*/\mu_0$ and $\nu$ is as defined in (\ref{eqn:nudefine}).
\end{lemma}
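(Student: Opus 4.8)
The plan is to collapse both estimates onto a single energy argument for the error field $\vec{e} := \vec{E}_\alpha - \vec{E}_0 - \vec{w}$. Subtracting the defining weak problem (\ref{eqn:bilineara}) for $\vec{w}$ from the identity (\ref{eqn:diffelphae0}), whose left-hand side is precisely $a_\alpha(\vec{E}_\alpha - \vec{E}_0, \vec{v})$, and using $\nabla\times\vec{E}_0 = \im\omega\mu_0\vec{H}_0$ together with the observation that the truncated sum in (\ref{eqn:bilineara}) is exactly the $P$th-order Taylor polynomial $T_P\vec{H}_0$ of $\vec{H}_0$ about $\vec{z}$ (cf.\ (\ref{eqn:expandb0full})), I obtain for all $\vec{v}\in\tilde{\vec{X}}_\alpha(\mathbb{R}^3)$ the residual identity
\[
a_\alpha(\vec{e},\vec{v}) = \im\omega(1-\mu_r^{-1})(\vec{H}_0 - T_P\vec{H}_0,\nabla\times\vec{v})_{B_\alpha} + \im\omega(\sigma_\alpha(\vec{E}_0-\vec{F}),\vec{v})_{B_\alpha}.
\]
The two right-hand terms are the only residuals: the first is the permeability source paired with the Taylor remainder of $\vec{H}_0$, the second the conductivity source paired with $\vec{E}_0-\vec{F}$, and the coefficient $\mu_0(\mu_0^{-1}-\mu_*^{-1})=1-\mu_r^{-1}$ already exhibits the factor sought in the estimates.

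The decisive choice of test function is the solenoidal representative of $\vec{e}$. Using the smoothness of $\Gamma$, I would fix the scalar potential $\phi_0$ by the gauge-respecting Helmholtz/Hodge decomposition on $B_\alpha$, so that $\vec{e}^s := \vec{e}-\nabla\phi_0\in\tilde{\vec{X}}_\alpha(\mathbb{R}^3)$ satisfies $\nabla\cdot\vec{e}^s = 0$ and $\vec{n}\cdot\vec{e}^s=0$ on $\Gamma_\alpha$ from the inside, while leaving $\nabla\times\vec{e}^s=\nabla\times\vec{e}$ and the exterior gauge $\int_{\Gamma_\alpha}\vec{n}\cdot\vec{e}^s|_+\,\dif\vec{x}=0$ intact. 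Rescaling the standard Friedrichs inequality from $B$ to $B_\alpha$ then gives the $\alpha$-weighted bound $\|\vec{e}^s\|_{L^2(B_\alpha)}\le C\alpha\|\nabla\times\vec{e}\|_{L^2(B_\alpha)}$, with $C$ independent of $\alpha$.

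Testing the residual identity with $\vec{v}=\vec{e}^s$ and taking real parts isolates the curl energy: since $(\vec{e},\vec{e}^s)_{B_\alpha}=\|\vec{e}^s\|_{L^2(B_\alpha)}^2$ is real, the conductivity contribution drops out of the real part and $\mathrm{Re}\,a_\alpha(\vec{e},\vec{e}^s)=\int_{\mathbb{R}^3}\mu_\alpha^{-1}|\nabla\times\vec{e}|^2\ge C\|\nabla\times\vec{e}\|_{L^2(B_\alpha)}^2$. On the right-hand side, the first residual is controlled by writing $\vec{H}_0-T_P\vec{H}_0=(\im\omega\mu_0)^{-1}(\im\omega\mu_0\vec{H}_0-\nabla\times\vec{F})$ and invoking (\ref{eqn:curlh0_curlfl2}), which yields a factor $C|1-\mu_r^{-1}|\alpha^{(5+2P)/2}\|\nabla\times\vec{E}_0\|_{W^{P+1,\infty}(B_\alpha)}\|\nabla\times\vec{e}\|_{L^2(B_\alpha)}$. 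The second residual is where the geometry pays off: because $\vec{e}^s$ is $L^2(B_\alpha)$-orthogonal to every gradient, the $O(1)$ gradient content of $\vec{E}_0-\vec{F}$ — in particular the leading value $\vec{E}_0(\vec{z})$, which a crude bound would leave at size $O(\alpha^{3/2})$ — is annihilated, and only the solenoidal part survives. Applying the rescaled Friedrichs inequality to both factors gives $|(\vec{E}_0-\vec{F},\vec{e}^s)_{B_\alpha}|\le C\alpha^2\|\nabla\times(\vec{E}_0-\vec{F})\|_{L^2(B_\alpha)}\|\nabla\times\vec{e}\|_{L^2(B_\alpha)}$; since $\omega\sigma_*\alpha^2=\nu/\mu_0$ by (\ref{eqn:nudefine}) and $\|\nabla\times(\vec{E}_0-\vec{F})\|_{L^2(B_\alpha)}\le C\alpha^{(5+2P)/2}\|\nabla\times\vec{E}_0\|_{W^{P+1,\infty}(B_\alpha)}$ again by (\ref{eqn:curlh0_curlfl2}), this contributes exactly $C\nu\,\alpha^{(5+2P)/2}\|\nabla\times\vec{E}_0\|_{W^{P+1,\infty}(B_\alpha)}\|\nabla\times\vec{e}\|_{L^2(B_\alpha)}$. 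Dividing by $\|\nabla\times\vec{e}\|_{L^2(B_\alpha)}$ produces (\ref{eqn:lemma:revised32a}); and since $\vec{E}_\alpha-\vec{E}_0-\nabla\phi_0-\vec{w}=\vec{e}^s$ for the same $\phi_0$, the Friedrichs bound together with (\ref{eqn:lemma:revised32a}) immediately supplies the extra power of $\alpha$ giving (\ref{eqn:lemma:revised32b}).

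The main obstacle is the second residual. A direct Cauchy--Schwarz bound on $(\sigma_\alpha(\vec{E}_0-\vec{F}),\vec{e})_{B_\alpha}$ fails, since $\vec{E}_0-\vec{F}$ is only $O(\alpha^{3/2})$ in $L^2(B_\alpha)$ because its gradient part carries the non-vanishing background value $\vec{E}_0(\vec{z})$; the whole argument therefore hinges on testing against the solenoidal projection $\vec{e}^s$ rather than $\vec{e}$, and on the $\alpha$-scaled Friedrichs constant being uniform in $\alpha$. Establishing this decomposition within $\tilde{\vec{X}}_\alpha(\mathbb{R}^3)$, with correct treatment of the interior divergence and the exterior gauge condition, is the delicate point, and it is here that the smoothness of $\Gamma$ is used; the construction generalises the argument behind Lemma~3.2 of~\cite{ammarivolkov2013} from $P=1$ to arbitrary $P$.
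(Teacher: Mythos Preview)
Your argument is correct and follows the same energy strategy as the paper: subtract the weak problems, introduce a gradient correction $\nabla\phi_0$, test with the corrected error, and convert the two residuals via the Taylor remainder bound (\ref{eqn:curlh0_curlfl2}) and an $\alpha$-scaled Friedrichs inequality. The only substantive difference is organisational. The paper absorbs $\vec{\Phi}_0=\nabla\phi_0$ into the \emph{source}, writing the second residual as $\im k(\vec{E}_0+\vec{\Phi}_0-\vec{F},\vec{v})_{B_\alpha}$ and bounding it directly with (\ref{eqn:l2e0_f_gradphi}); you leave the source as $\vec{E}_0-\vec{F}$ and instead project the \emph{test function} onto solenoidal fields, exploiting $L^2$-orthogonality to gradients to kill the leading part of $\vec{E}_0-\vec{F}$. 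These are dual realisations of the same Friedrichs gain of one power of $\alpha$.

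One point to watch: the paper's $\phi_0$ is the one defined in~\cite{ammarivolkov2013}, chosen so that $\vec{E}_0+\nabla\phi_0-\vec{F}$ obeys Friedrichs on $B_\alpha$, whereas your $\phi_0$ is defined as the gradient part of $\vec{e}=\vec{E}_\alpha-\vec{E}_0-\vec{w}$. Your argument is self-contained and yields both estimates for \emph{your} $\phi_0$, which is all that is needed downstream; but if you wish to match the statement verbatim you should check that the two $\phi_0$'s coincide (this requires $\vec{E}_\alpha-\vec{F}-\vec{w}$ to be divergence-free with controlled normal trace in $B_\alpha$, which follows from the structure of the transmission problems).
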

\begin{proof}
The proof follows the steps in~\cite{ammarivolkov2013}, but  uses instead the higher order definitions of ${\vec F}$ and $\nabla \times {\vec F}$ stated in (\ref{eqn:definef}), (\ref{eqn:definecurlf}), respectively. The steps are the same until immediately before their (3.10). In our case, we have from (\ref{eqn:bilineara}) and (\ref{eqn:diffelphae0})
\begin{align}
( \mu_\alpha^{-1} & \nabla \times ( {\vec E}_\alpha - {\vec E}_0 - {\vec \Phi}_0 -{\vec w}), \nabla \times {\vec v} )_{{\mathbb R}^3}  - \im \omega ( \sigma_\alpha ( {\vec E}_\alpha - {\vec E}_0 - {\vec \Phi}_0 - {\vec w}) , {\vec v})_{B_\alpha} =\nonumber \\
&\im \omega \mu_0 ( \mu_0^{-1} - \mu_*^{-1} ) \left ( {\vec H}_0  - {\sum_{p=0}^P \frac{1 }{p!} ({\vec D}_z^p ({\vec H}_0({\vec z})))_{{J}(p+1)} (\Pi ( {\vec x}- {\vec z})  )_{J(p)} {\vec e}_j , \nabla \times {\vec v}} \right )_{B_\alpha} + \nonumber \\
&  \im \omega ( \sigma_\alpha ({\vec E}_0 + {\vec \Phi}_0 - {\vec F}),{\vec v})_{B_\alpha} \qquad
\forall {\vec v} \in \tilde{\vec X}_\alpha ({\mathbb R}^3), 
\end{align}
where ${\vec \Phi}_0 = \nabla \phi_0$ in $B_\alpha$ and ${\vec \Phi}_0 = \nabla  \tilde{\phi}_0$ in  $B_\alpha^c$ as defined in ~\cite{ammarivolkov2013}. Their (3.10) then becomes
\begin{align}
(\mu_0 \mu_\alpha^{-1} & \nabla \times ( {\vec E}_\alpha - {\vec E}_0 - {\vec \Phi}_0 -{\vec w}), \nabla \times {\vec v} )_{{\mathbb R}^3}  - \im k (  ( {\vec E}_\alpha - {\vec E}_0 - {\vec \Phi}_0 - {\vec w}) , {\vec v})_{B_\alpha} =\nonumber \\
&\im \omega \mu_0 (1 - \mu_r^{-1} ) \left ( {\vec H}_0  - {\sum_{p=0}^P \frac{1 }{p!} ({\vec D}_z^p ({\vec H}_0({\vec z})))_{{J}(p+1)} (\Pi ( {\vec x}- {\vec z})  )_{J(p)} {\vec e}_j , \nabla \times {\vec v}} \right )_{B_\alpha} + \nonumber \\
&  \im k ( ({\vec E}_0 + {\vec \Phi}_0 - {\vec F}),{\vec v})_{B_\alpha} ,\label{eqn:new310}
\end{align}
and we find from the Cauchy-Schwartz inequality, (\ref{eqn:definecurlf}) and (\ref{eqn:curlh0_curlfl2})  that
\begin{align}
&\left | \im \omega \mu_0\left ( {\vec H}_0  - {\sum_{p=0}^P \frac{1 }{p!} ({\vec D}_z^p ({\vec H}_0({\vec z})))_{{J}(p+1)} (\Pi ( {\vec x}- {\vec z})  )_{J(p)} {\vec e}_j , \nabla \times {\vec v}} \right )_{B_\alpha} \right | \le\nonumber \\
 &\qquad \qquad C \alpha^{\frac{5+2P}{2}}  \| \nabla \times { \vec E}_0 \|_{W^{{P+1},\infty}(B_\alpha)} \|  \nabla \times {\vec v} \|_{L^2(B_\alpha)}. \label{eqn:new310a}
\end{align}
Choosing ${\vec v} =  {\vec E}_\alpha - {\vec E}_0 - {\vec \Phi}_0 -{\vec w}$ in (\ref{eqn:new310}) and using (\ref{eqn:new310a}) then leads to the bound
\begin{align}
\| \nabla  \times ( {\vec E}_\alpha - {\vec E}_0  -{\vec w}) \|_{L^2(B_\alpha) } ^2 &\le C \alpha^{\frac{5+2P}{2}} | 1- \mu_r^{-1} |   \| \nabla \times { \vec E}_0 \|_{W^{{P+1},\infty}(B_\alpha)} \|  \nabla \times {\vec v} \|_{L^2(B_\alpha)} +\nonumber \\   
& k\| {\vec E}_0 + {\vec \Phi}_0 - {\vec F}\|_{L^2(B_\alpha)}  \|   {\vec v} \|_{L^2(B_\alpha)} \nonumber \\
\le &C \alpha^{\frac{5+2P}{2}} ( | 1- \mu_r^{-1} |    + \nu )  \| \nabla \times { \vec E}_0 \|_{W^{{P+1},\infty}(B_\alpha)} \|  \nabla \times {\vec v} \|_{L^2(B_\alpha)}  \label{eqn:310b},
\end{align}
where, in the last step, we have used $k=\nu / \alpha^2$,
\begin{align}
\|  {\vec E}_0 + {\vec \Phi}_0 - {\vec F}\|_{L^2(B_\alpha)}  \le&  C  \alpha  \|\nabla \times( {\vec E}_0  - {\vec F}) \|_{L^2(B_\alpha)} \le C \alpha^\frac{7+2P}{2} \| \nabla \times { \vec E}_0 \|_{W^{{P+1},\infty}(B_\alpha)}   \label{eqn:l2e0_f_gradphi} ,
\end{align}
and $  \|{\vec v}\|_{L^2(B_\alpha)}  \le C \alpha \| \nabla \times {\vec v}\|_{L^2(B_\alpha)} $.
 The result in (\ref{eqn:lemma:revised32a}) follows immediately from (\ref{eqn:310b}), and (\ref{eqn:lemma:revised32b}) follows from additionally using 
 \begin{align}
 \| {\vec E}_\alpha - {\vec E}_0 -\nabla \phi_0 - {\vec w} \|_{L^2(B_\alpha)} \le C \alpha  \| \nabla \times ( {\vec E}_\alpha - {\vec E}_0  - {\vec w}) \|_{L^2(B_\alpha)} \nonumber,
\end{align}
as obtained in ~\cite{ammarivolkov2013}.
\end{proof}

Using this result and ${\vec w} ({\vec x}) = \alpha {\vec w}_0 \left ( \frac{{\vec x}- {\vec z}}{\alpha} \right )$
we find that
  Theorem 3.1 in~\cite{ammarivolkov2013} immediately generalises to:
\begin{theorem} \label{thm:revised31}
There exists a constant $C$ 
  such that
\begin{align}
\left \| \nabla \times \left ( {\vec E}_\alpha - {\vec E}_0 - \alpha {\vec w}_0 \left ( \frac{{\vec x} - {\vec z}}{\alpha} \right ) \right ) \right \|_{L^2(B_\alpha)} \le C &
 \left ( | 1 - \mu_r^{-1} | + \nu  \right ) \alpha^{\frac{5+2P}{2}} \| \nabla \times { \vec E}_0 \|_{W^{{P+1},\infty}(B_\alpha)} , \\
\left \|  {\vec E}_\alpha - {\vec E}_0 -\nabla \phi_0 - \alpha {\vec w}_0 \left ( \frac{{\vec x} - {\vec z}}{\alpha} \right )  \right \|_{L^2(B_\alpha)} \le C &
 \left ( | 1 - \mu_r^{-1} | + \nu  \right ) \alpha^{\frac{7+2P}{2}} \| \nabla \times { \vec E}_0 \|_{W^{{P+1},\infty}(B_\alpha)} ,
\end{align}
\end{theorem}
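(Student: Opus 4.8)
The plan is to obtain Theorem~\ref{thm:revised31} as an immediate corollary of Lemma~\ref{lemma:revised32}, since the field ${\vec w}$ appearing in the lemma and the scaled field $\alpha {\vec w}_0(({\vec x}-{\vec z})/\alpha)$ appearing in the theorem are one and the same object. All of the analytic content --- the higher order energy estimate and its two consequences (\ref{eqn:lemma:revised32a}) and (\ref{eqn:lemma:revised32b}) --- has already been established in the lemma; what remains is purely a change of variables.

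First I would make precise the relation ${\vec w}({\vec x}) = \alpha {\vec w}_0(({\vec x}-{\vec z})/\alpha)$. Introducing ${\vec \xi} = ({\vec x}-{\vec z})/\alpha$, so that $\nabla_x = \alpha^{-1}\nabla_\xi$ and $\dif {\vec x} = \alpha^3 \dif {\vec \xi}$, and testing (\ref{eqn:bilineara}) against fields of the form ${\vec v}({\vec x}) = \alpha {\vec v}_0({\vec \xi})$, I would verify that both the bilinear form $a_\alpha$ and the linear right-hand side acquire a common overall factor $\alpha^3$. On the left, the curl--curl term becomes $\alpha^3 (\mu^{-1}\nabla_\xi \times {\vec w}_0, \nabla_\xi \times {\vec v}_0)_{{\mathbb R}^3}$ and the conductivity term becomes $\im(\nu/\mu_0)\alpha^3({\vec w}_0,{\vec v}_0)_B$, using $\omega \sigma_* \alpha^2 = \nu/\mu_0$ from (\ref{eqn:nudefine}); on the right, the two source integrals are already written naturally in the ${\vec \xi}$ variable through the expansions (\ref{eqn:expanda0full})--(\ref{eqn:expandb0full}) and the definition (\ref{eqn:definef}) of ${\vec F}$, and each likewise rescales to $\alpha^3$ times a (still $\alpha$-dependent) linear functional on the fixed domain $B$, the surviving powers $\alpha^p$ coming from $(\Pi({\vec x}-{\vec z}))_{J(p)} = \alpha^p (\Pi({\vec \xi}))_{J(p)}$ being absorbed into that data. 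Cancelling the common $\alpha^3$ shows that ${\vec w}_0$ is the unique solution of a well-posed variational problem on ${\mathbb R}^3$ of the same type, whose solvability follows exactly as for ${\vec w}$ in Lemma~\ref{lemma:revised32}, and this confirms the identification.

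Having identified ${\vec w}$ with $\alpha {\vec w}_0(({\vec x}-{\vec z})/\alpha)$, I would then simply substitute this expression into (\ref{eqn:lemma:revised32a}) and (\ref{eqn:lemma:revised32b}). The fields ${\vec E}_\alpha$, ${\vec E}_0$ and $\nabla \phi_0$ are left untouched, so the left-hand sides become exactly those appearing in the theorem while the right-hand sides are unchanged, which yields the two claimed bounds with the same constant $C$, independent of $\alpha$.

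I do not expect any genuine analytic obstacle here, since the difficult estimate is entirely contained in Lemma~\ref{lemma:revised32}. The only point requiring care is the bookkeeping of powers of $\alpha$ in the rescaling, namely checking that $a_\alpha$ and the full right-hand side of (\ref{eqn:bilineara}) scale by the single common factor $\alpha^3$ so that the ansatz ${\vec w} = \alpha {\vec w}_0(\cdot)$ is admissible; this is the step where a stray exponent or sign would be easiest to introduce, but it is routine given the definition of $\nu$ in (\ref{eqn:nudefine}) and the already ${\vec \xi}$-adapted form of the source terms.
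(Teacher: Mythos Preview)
Your proposal is correct and matches the paper's approach exactly: the paper presents Theorem~\ref{thm:revised31} as an immediate consequence of Lemma~\ref{lemma:revised32} together with the identification ${\vec w}({\vec x}) = \alpha {\vec w}_0(({\vec x}-{\vec z})/\alpha)$, with no further argument given. Your additional verification of the $\alpha^3$ rescaling in the variational formulation is more detail than the paper supplies, but it is the right check and confirms the identification the paper simply asserts.
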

Repeating their steps for the higher order terms we find that
\begin{equation}
{\vec w}_0 ({\vec \xi})  = \im \omega \mu_0  \sum_{p=0}^P  \frac{\alpha^p}{p!(p+2)}   ({\vec D}_z^p ({\vec H}_0({\vec z})))_{{J}(p+1)} {\vec \theta}_{{J}(p+1)} ({\vec \xi}) ,
\label{eqn:w0expand}
\end{equation}
where 
\begin{subequations}
\begin{align}
\nabla_\xi \times \mu_*^{-1} \nabla_\xi \times {\vec \theta}_{ {J}(p+1)} - \im \omega \sigma_* \alpha^2 {\vec \theta}_{{J}(p+1)}  = &   \im \omega \sigma_* \alpha^{2} (\Pi ({\vec \xi }) )_{J(p)}  {\vec e}_j \times {\vec \xi}  && \text{in $B$ } , \\
\nabla_\xi \cdot {\vec \theta}_{ {J}(p+1)}  =  0, \ \nabla_\xi \times \mu_0^{-1} \nabla_\xi \times {\vec \theta}_{ {J}(p+1)}   = & {\vec 0}  && \text{in ${\mathbb R}^3 \setminus B$ } , \\
[{\vec n} \times {\vec \theta}_{ {J}(p+1)} ]_\Gamma =  & {\vec 0},  && \text{on $\Gamma$} , \\
 [{\vec n} \times \mu^{-1} \nabla_\xi  \times {\vec \theta}_ { {J}(p+1)} ]_\Gamma  = &  - (p+2)[ \mu^{-1} ]_\Gamma  {\vec n} \times {\vec e}_j  ( \Pi(  {\vec \xi})  )_{J(p)} 
 && \text{on $\Gamma$}, \\
 \int_{\Gamma} {\vec n} \cdot {\vec \theta}_{ {J}(p+1)}  |_+ \dif {\vec \xi}   = & 0 , \\
{\vec \theta}_{ {J}(p+1)}  = & O( | {\vec \xi} |^{-1}) && \text{as $|{\vec \xi} | \to \infty$ }.
\end{align} \label{eqn:transproblemtheta}
\end{subequations}

\begin{remark}
The indices on the solution  ${\vec \theta}_{{J}(p+1)} ({\vec \xi}) $ to the auxiliary problem (\ref{eqn:transproblemtheta}) should be interpreted differently to the tensoral indices previously presented. They should be interpreted as the vector valued solution when the source terms in $B$ and on $\Gamma$ contain the product ${\vec e}_j (\Pi({\vec \xi}))_{J(p)}$. The transmission problem for ${\vec \theta}_{{J}(p+1)} ({\vec \xi}) $ is independent of the object's position and is independent of the background excitation. It depends only on the shape of the object, its size, material properties and the frequency of the excitation. It generalises the transmission problem stated in (\ref{eqn:transproblemthetar2}), obtained in \cite{ammarivolkov2013}, and reduces to this case when $p=0$.  We will examine the transformation of ${\vec \theta}_{ {J}(p+1)} ({\vec \xi}) $ under rotations and/or reflections of the object in the proof of Lemma~\ref{lemma:tensors}.
\end{remark}

Alternatively, by using (\ref{eqn:expanda0full}) and (\ref{eqn:expandb0full}), and substituting in to the source terms in $B$ and on $\Gamma$ in (\ref{eqn:transproblem}), we see that it is possible to write 
\begin{equation}
{\vec A}_\Delta ({\vec \xi})  = \sum_{p=0}^\infty \mu_0 \frac{\alpha^p}{p!(p+2)}   ({\vec D}_z^p ({\vec H}_0({\vec z})))_{ {J}(p+1)} {\vec \theta}_{{J}(p+1)} ({\vec \xi}) , \label{eqn:Adelta}
 \end{equation}
 which, by truncating at $P$ terms and multiplying by $\im \omega $, coincides with the weak solution ${\vec w}_0 ({\vec \xi})$ and a hence provides a physical interpretation for the latter.

\subsection{Integral representation formula}
 Ammari {\it et al.}~\cite{ammarivolkov2013} have derived the following Stratton--Chu type formula
\begin{align*}
({\vec H}_\alpha  -{\vec H}_0)({\vec x}) =& \int_{B_\alpha} \nabla_x G({\vec x},{\vec y}) \times \nabla_y \times ({\vec H}_\alpha-{\vec H}_0)({\vec y})
 \dif {\vec y} +\nonumber \\
 & \left ( 1- \frac{\mu_*}{\mu_0}  \right ) \int_{B_\alpha} ( {\vec H}_\alpha({\vec y}) \cdot \nabla_y ) \nabla_x G({\vec x},{\vec y}) \dif {\vec y} ,
 \end{align*}
 for ${\vec x}$ exterior to $B_\alpha$,  which relates the magnetic field perturbation outside the object to the magnetic field in its interior. By introducing the representation for $B_\alpha$, and using the eddy current equations (\ref{eqn:eddymodel}), we have the alternative form
 \begin{align}
  ({\vec H}_\alpha  -{\vec H}_0)({\vec x}) =  &  \sigma_* \int_{B_\alpha} \nabla_x G({\vec x},{\vec y} )\times {\vec E}_\alpha ( {\vec y}  ) \dif {\vec y}-
  \left ( 1- \frac{\mu_*}{\mu_0}  \right ) \int_{B_\alpha} {\vec D}_{ x} ^2G({\vec x}, {\vec y }  )  {\vec H}_\alpha(  {\vec y} )   \dif {\vec y} \nonumber \\  
 = & \mathrm{I} + \mathrm{II} .\label{eqn:intrep}
 \end{align}

\subsection{Asymptotic formula}

One approach to approximating integrals in (\ref{eqn:intrep}) is to transform the domain of integration from $B_\alpha $ to $B$, to express ${\vec E}_\alpha (\alpha {\vec \xi}+ {\vec z})$ and ${\vec H}_\alpha (\alpha {\vec \xi}+ {\vec z})$ in terms of ${\vec A}_\Delta ( {\vec \xi})$ and ${\vec A}_0  (\alpha {\vec \xi}+ {\vec z}) $ (and their curls) and then to
substitute in truncated expansions of (\ref{eqn:expanda0full}) and (\ref{eqn:Adelta}). However, as rigorous estimates for these approximations are not available, we would not be able to quantify the remainder and so,
instead, we pursue  the previously presented approach in~\cite{ammarivolkov2013}, which uses  weak solutions and has a rigorous theoretical framework. We extend this approach to the higher order case  by using the bounds we have derived in Theorem~\ref{thm:revised31} and the result is the following theorem, which generalises their Theorem 3.2.

 \begin{theorem} \label{thm:asympexpand}
 Let $\nu$ be order one and let $\alpha$ be small. For ${\vec x}$ away from the location ${\vec z}$ of the inclusion we have
 \begin{align}
   ({\vec H}_\alpha&-{\vec H}_0)({\vec x})
   = - \im \nu \alpha^3 \sum_{m=0}^{M-1} \sum_{p=0}^{M-1-m} \frac{(-1)^m \alpha^{p+m}}{p!(m+1)! (p+2)}\nonumber\\
   & \int_B  (({\vec D}_x^{2+m} G({\vec x},{\vec z})  {\vec \xi} )_{{K}(m+1)} {\vec e}_k ( \Pi ({\vec \xi}))_{K(m)}) \times \nonumber \\
& \left ( ({\vec D}_z^p ({\vec H}_0({\vec z})))_{ {J}(p+1)}  (   {\vec \theta}_{{J}(p+1)} + (\Pi({\vec \xi}))_{J(p)} {\vec e}_j \times {\vec \xi} )  \right ) \dif {\vec \xi}+\nonumber \\
& \alpha^3 \left ( 1 - \frac{\mu_0}{\mu_*} \right )  \sum_{m=0}^{M-1} \sum_{p=0}^{M-1-m}
\frac{(-1)^m  \alpha^{p+m} }{p! m!} ( {\vec D}_x^{2+m} G ({\vec x},{\vec z}))_{[i,K(m+1)]} ({\vec e}_i \otimes {\vec e}_k) \nonumber \\
& \int_B (\Pi( {\vec \xi}))_{K(m)}  ({\vec D}_z^p ({\vec H}_0({\vec z})))_{ {J}(p+1)}  \left ( \frac{1}{p+2} \nabla_\xi \times {\vec \theta}_{{J}(p+1) } +( \Pi( {\vec \xi}))_{J(p)} {\vec e}_j  \right ) \dif {\vec \xi}   + {\vec R}({\vec x}),
 \end{align}
 where $|{\vec R}({\vec x})| \le C\alpha^{3+M}   \| {\vec H}_0 \|_{W^{{M+1}, \infty} (B_\alpha)}  $ .
 \end{theorem}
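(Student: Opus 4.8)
The plan is to start from the exact integral representation (\ref{eqn:intrep}), $({\vec H}_\alpha-{\vec H}_0)({\vec x})=\mathrm{I}+\mathrm{II}$, and to convert each volume integral over $B_\alpha$ into a \emph{double} asymptotic series: one sum from Taylor expanding the Green's-function kernel in its integration argument about ${\vec z}$, and a second from inserting the truncated interior-field expansions assembled in Section~\ref{sect:derive}. First I would rescale both integrals to the fixed domain $B$ via ${\vec y}=\alpha{\vec \xi}+{\vec z}$ (so $\dif{\vec y}=\alpha^3\dif{\vec \xi}$) and expand the kernels. Using $\partial_{y_\ell}G=-\partial_{x_\ell}G$, the factor $\nabla_x G({\vec x},{\vec y})$ in $\mathrm{I}$ generates $\sum_{m}\tfrac{(-1)^m\alpha^m}{m!}(\Pi({\vec \xi}))_{K(m)}{\vec D}_x^{1+m}G({\vec x},{\vec z})$ and the Hessian ${\vec D}_x^2 G({\vec x},{\vec y})$ in $\mathrm{II}$ the analogous series with ${\vec D}_x^{2+m}G$, the $(-1)^m$ recording the $x\leftrightarrow y$ swap; since ${\vec x}$ is bounded away from ${\vec z}$ these kernels and all their derivatives are smooth and bounded on the support of integration.

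For the interior fields I would substitute the expansions already available. In $\mathrm{II}$ I would write ${\vec H}_\alpha=\mu_*^{-1}\nabla\times{\vec A}_\alpha$, split ${\vec A}_\alpha={\vec A}_0+({\vec A}_\alpha-{\vec A}_0)$, and use (\ref{eqn:expandb0full}) for $\nabla\times{\vec A}_0=\mu_0{\vec H}_0$ together with (\ref{eqn:Adelta}) for the curl of the reduced potential, which introduces the $\nabla_\xi\times{\vec \theta}_{J(p+1)}$ terms. The prefactor $-(1-\mu_*/\mu_0)$ from (\ref{eqn:intrep}) combines with $\mu_*^{-1}\mu_0$ to give exactly the $(1-\mu_0/\mu_*)$ and the $\tfrac{1}{p+2}\nabla_\xi\times{\vec \theta}_{J(p+1)}+(\Pi({\vec \xi}))_{J(p)}{\vec e}_j$ structure of the stated ${\mathfrak N}$-type term. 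In $\mathrm{I}$ the naive kernel expansion supplies only ${\vec D}_x^{1+m}G$, one derivative short of the target. Following Ammari \emph{et al.}, I would first integrate by parts using that the induced current $\sigma_*{\vec E}_\alpha=\nabla\times{\vec H}_\alpha$ is divergence free in $B_\alpha$; this transfers a derivative onto the kernel and introduces a moment-arm factor $({\vec y}-{\vec z})=\alpha{\vec \xi}$, simultaneously raising $\nabla_x G$ to ${\vec D}_x^2 G$, supplying the extra ${\vec \xi}$ contraction, and contributing one extra power of $\alpha$. Inserting then ${\vec E}_\alpha={\vec E}_0+(\nabla\phi_0+\alpha{\vec w}_0)$ from Theorem~\ref{thm:revised31} with (\ref{eqn:w0expand}), ${\vec E}_0=\im\omega{\vec A}_0$ from (\ref{eqn:expanda0full}), and using $\omega\mu_0\sigma_*\alpha^2=\nu$ from (\ref{eqn:nudefine}), produces the $\im\nu$ prefactor and the combination ${\vec \theta}_{J(p+1)}+(\Pi({\vec \xi}))_{J(p)}{\vec e}_j\times{\vec \xi}$; the curl-free piece $\nabla\phi_0$ does not contribute at the orders retained. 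Collecting the product of the two series yields a double sum indexed by $m$ (kernel) and $p$ (field); since the $(m,p)$ contribution carries total weight $\alpha^{3+m+p}$, truncating the kernel at order $M-1-m$ and the field at $P=M-1$ produces exactly the range $\sum_{m=0}^{M-1}\sum_{p=0}^{M-1-m}$, all terms of total degree $\ge M$ passing into ${\vec R}$.

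The main work, and the step I expect to be the genuine obstacle, is the remainder estimate $|{\vec R}({\vec x})|\le C\alpha^{3+M}\|{\vec H}_0\|_{W^{M+1,\infty}(B_\alpha)}$. Three independent error sources must be controlled and combined: the Taylor truncation of the kernels (controlled by the smoothness of $G$ away from ${\vec z}$ and the powers of $\alpha$ carried by the retained $(\Pi({\vec \xi}))$), the Taylor truncation of $\nabla\times{\vec E}_0=\im\omega\mu_0{\vec H}_0$ quantified by (\ref{eqn:h0_curlflinf})--(\ref{eqn:curlh0_curlfl2}) and (\ref{eqn:l2e0_f_gradphi}), and the error in replacing the true interior fields by the weak-solution approximation, quantified by Theorem~\ref{thm:revised31}. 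I would pair the bounded kernel factors against the $L^2(B_\alpha)$ field errors by Cauchy--Schwarz, using $\|\nabla_x G\|_{L^2(B_\alpha)}\le C\alpha^{3/2}$, and insert $P=M-1$ into the $\alpha^{(5+2P)/2}$ and $\alpha^{(7+2P)/2}$ rates of Theorem~\ref{thm:revised31} (the curl-rate for $\mathrm{II}$, the field-rate together with $\sigma_*=\nu/(\omega\mu_0\alpha^2)$ for $\mathrm{I}$). The delicate bookkeeping is that $\mathrm{II}$ lands at $\alpha^{3+M}$ directly, whereas a naive estimate of $\mathrm{I}$ gives only $\alpha^{2+M}$; the missing power is recovered precisely by the moment-arm factor $\alpha{\vec \xi}$ produced by the divergence-free integration by parts above, which must be shown to be inherited by the remainder and not only by the leading terms. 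It then remains to verify that the cross terms between an exactly-expanded kernel and an only-approximately-expanded field, together with the neglected tail $m+p\ge M$, all sit at order at least $\alpha^{3+M}$ uniformly in the truncation; absorbing the frequency factors and using $W^{M,\infty}\subseteq W^{M+1,\infty}$ with $\nabla\times{\vec E}_0=\im\omega\mu_0{\vec H}_0$ to express the bound in terms of $\|{\vec H}_0\|_{W^{M+1,\infty}}$ then completes the proof, the leading $M=1$ case reducing to (\ref{eqn:rank2asymform}) as a consistency check.
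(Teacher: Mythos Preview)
Your overall architecture---start from (\ref{eqn:intrep}), rescale to $B$, Taylor-expand both the Green kernel and the interior field, and control three separate error sources via Theorem~\ref{thm:revised31} and Cauchy--Schwarz---is exactly the paper's route. The treatment of $\mathrm{II}$ and the remainder bookkeeping you outline essentially matches Lemma~\ref{lemma:boundII1II2II3} and Lemma~\ref{lemma:boundII} (the paper takes $P=Q=T=M$, $S=M-1$, not $P=M-1$, but that is cosmetic).

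The substantive discrepancy is in your handling of $\mathrm{I}$. You propose to recover the missing power of $\alpha$ (and simultaneously promote $\nabla_xG$ to ${\vec D}_x^2G$) by an integration by parts exploiting $\nabla\cdot(\sigma_*{\vec E}_\alpha)=0$, which you say produces a moment-arm factor $\alpha{\vec\xi}$. This step is not what the paper does, and as written it is not justified: a divergence-free integrand does not by itself hand you a factor of $({\vec y}-{\vec z})$ under the integral, and the relevant boundary term (via ${\vec E}_\alpha\cdot{\vec n}$ on $\Gamma_\alpha$) does not vanish pointwise---only its surface integral does, by (\ref{eqn:efieldunique}). More importantly, you would need this gain not on ${\vec E}_\alpha$ but on the truncated surrogate ${\vec F}+\alpha{\vec w}_0$, and for the \emph{remainder} as well, which your sketch does not address.

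The paper's actual mechanism is different and more delicate: it Taylor-expands $\nabla_xG$ directly to obtain $\mathrm{I}_4$ with a sum over $m\ge 0$ carrying ${\vec D}_x^{1+m}G$, and then proves (Lemma~\ref{lemma:m0I4}) that the entire $m=0$ term of $\mathrm{I}_4$ \emph{vanishes identically}. That vanishing is not a soft estimate; it uses the transmission problem (\ref{eqn:transproblemtheta}) to rewrite $\int_B({\vec\theta}_{J(p+1)}+(\Pi({\vec\xi}))_{J(p)}{\vec e}_j\times{\vec\xi})\,\dif{\vec\xi}$ via integration by parts, and then the symmetry identity~(\ref{eqn:curlXizero}) (mixed partials of ${\vec H}_0$ commute, contracted against the alternating tensor) to kill what remains. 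Only after this cancellation does reindexing $m\mapsto m+1$ produce ${\vec D}_x^{2+m}G$, the extra $\alpha$, and the $(m+1)!$ in the denominator. Your proposal does not contain this argument, and without it the claimed $\alpha^{3+M}$ bound on the $\mathrm{I}$-remainder does not follow.
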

 \begin{proof}
 The result follows immediately from Lemmas~\ref{lemma:boundI} and~\ref{lemma:boundII} presented in Section~\ref{sect:proofasymp}.
 \end{proof}
 
\begin{corollary}
It follows from Lemma~\ref{lemma:tensors} that an alternative form of Theorem~\ref{thm:asympexpand} is
 \begin{align}
   ({\vec H}_\alpha & -{\vec H}_0)({\vec x})_i =  \sum_{m=0}^{M-1} \sum_{p=0}^{M-1-m}  ({\vec D}_x^{2+m} G({\vec x},{\vec z})  )_{[\ell ,{K}(m+1)]} { {\mathfrak A}_{[[i,\ell, {K}(m+1)],{J}(p+1)]}}
    ( {\vec D}_z^p({\vec H}({\vec z})))_{ {J}(p+1)} \nonumber \\
   & + \sum_{m=0}^{M-1} \sum_{p=0}^{M-1-m}  ({\vec D}_x^{2+m} G({\vec x},{\vec z})  )_{[i,{K}(m+1)]}  {\mathfrak N}_{{K}(m+1) {J}(p+1) } ( {\vec D}_z^p({\vec H}({\vec z})))_{ {J}(p+1)} +  ({\vec R}({\vec x}))_i ,\label{eqn:fulltensorexp}
  \end{align}
  where $|{\vec R}({\vec x})| \le C\alpha^{3+M}   \| {\vec H}_0 \|_{W^{{M+1}, \infty} (B_\alpha)}  $ and
  \begin{align}
 {  {\mathfrak A}_{[[i,\ell, {K}(m+1)],{J}(p+1)]}} : = & - \im \nu \frac{(-1)^m \alpha^{3+p+m}}{p!(m+1)! (p+2)}  {\vec e}_i \cdot \nonumber \\
&    \int_B   {\vec e}_k \times 
( ( {\vec \xi} )_{\ell }  (\Pi({\vec \xi}))_{K(m)}    (  {\vec \theta}_{{J}(p+1)}  + (\Pi({\vec \xi}))_{J(p)} {\vec e}_j \times {\vec \xi} ) )  \dif {\vec \xi} ,\label{eqn:Arank4h} \\
 {\mathfrak N}_{{K}(m+1) {J}(p+1) }:= & \frac{ (-1)^m\alpha^{3+p+m} }{p! m!} \left ( 1 - \frac{\mu_0}{\mu_*} \right ) {\vec e}_k \cdot \nonumber \\
 &  \int_B ( \Pi ( {\vec \xi} ))_{K(m)}   \left ( \frac{1}{p+2} \nabla_\xi \times {\vec \theta}_{{J}(p+1) } +(\Pi( {\vec \xi}))_{J(p)} {\vec e}_j  \right ) \dif {\vec \xi} , \label{eqn:Nrank2h}
\end{align} 
are the coefficients of rank $4+m+p$ and $2+m+p$ tensors, respectively.
\end{corollary}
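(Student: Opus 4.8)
The plan is to deduce (\ref{eqn:fulltensorexp}) from Theorem~\ref{thm:asympexpand} by a purely algebraic reorganisation of its two double sums, in which every factor independent of the integration variable ${\vec \xi}$ is taken outside the volume integral and what remains is recognised as the coefficient arrays (\ref{eqn:Arank4h}) and (\ref{eqn:Nrank2h}). The Green's function derivatives $({\vec D}_x^{2+m} G({\vec x},{\vec z}))$ and the field derivatives $({\vec D}_z^p ({\vec H}_0({\vec z})))_{{J}(p+1)}$ carry no ${\vec \xi}$-dependence, so they may be commuted through $\int_B \cdots \dif{\vec \xi}$; the substantive content is therefore the bookkeeping of the repeated index sets ${K}(m+1)$ and ${J}(p+1)$, together with the fact, supplied by Lemma~\ref{lemma:tensors}, that the resulting arrays are genuine tensors rather than frame-dependent collections of numbers.

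For the first (conductive) double sum I would first write the contracted factor as $({\vec D}_x^{2+m} G({\vec x},{\vec z})\,{\vec \xi})_{{K}(m+1)} = ({\vec D}_x^{2+m} G({\vec x},{\vec z}))_{[\ell,{K}(m+1)]} \xi_\ell$, so that the ${\vec \xi}$-independent factor $({\vec D}_x^{2+m} G)_{[\ell,{K}(m+1)]}$ and the field derivative $({\vec D}_z^p ({\vec H}_0))_{{J}(p+1)}$ can both be pulled outside the integral. The integrand then reduces to the cross product of ${\vec e}_k$ with the ${\vec \xi}$-dependent vector $\xi_\ell (\Pi({\vec \xi}))_{K(m)} ({\vec \theta}_{{J}(p+1)} + (\Pi({\vec \xi}))_{J(p)} {\vec e}_j \times {\vec \xi})$. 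Taking the $i$-th component via the scalar triple-product identity ${\vec e}_i \cdot ({\vec e}_k \times {\vec c}) = \varepsilon_{ik\ell}({\vec c})_\ell$ reproduces exactly the definition (\ref{eqn:Arank4h}) of ${\mathfrak A}_{[[i,\ell,{K}(m+1)],{J}(p+1)]}$, and hence yields the first sum of (\ref{eqn:fulltensorexp}). The point to verify here is that the free spatial component $i$ originates from the cross product and is absorbed into ${\mathfrak A}$, whereas the contracted index $\ell$ is precisely the one paired against $({\vec D}_x^{2+m} G)_{[\ell,{K}(m+1)]}$.

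For the second (magnetic) double sum the manipulation is simpler: the dyad $({\vec e}_i \otimes {\vec e}_k)$ multiplying $({\vec D}_x^{2+m} G)_{[i,{K}(m+1)]}$ acts on the remaining integral to extract its $k$-th component, i.e.\ it implements ${\vec e}_k \cdot \int_B \cdots \dif{\vec \xi}$, after which pulling out $({\vec D}_z^p ({\vec H}_0))_{{J}(p+1)}$ leaves precisely the definition (\ref{eqn:Nrank2h}) of ${\mathfrak N}_{{K}(m+1){J}(p+1)}$. Here the free index $i$ is inherited directly from the Green's function factor rather than from a cross product, which explains why ${\mathfrak N}$ enters through the pattern $({\vec D}_x^{2+m} G)_{[i,{K}(m+1)]}\,{\mathfrak N}_{{K}(m+1){J}(p+1)}$. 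The residual term and its bound $|{\vec R}({\vec x})| \le C\alpha^{3+M}\|{\vec H}_0\|_{W^{M+1,\infty}(B_\alpha)}$ carry over verbatim from Theorem~\ref{thm:asympexpand}, since the reorganisation does not touch ${\vec R}$.

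The main obstacle I anticipate is not the factoring itself but confirming, through Lemma~\ref{lemma:tensors}, that $({\mathfrak A}_{[[i,\ell,{K}(m+1)],{J}(p+1)]})$ and $({\mathfrak N}_{{K}(m+1){J}(p+1)})$ genuinely are the coefficients of rank $4+m+p$ and rank $2+m+p$ tensors. This requires understanding how ${\vec \theta}_{{J}(p+1)}$ transforms under rotations and reflections of $B$, so that the ${\vec \xi}$-moments $\int_B \xi_\ell (\Pi({\vec \xi}))_{K(m)}(\cdots)\dif{\vec \xi}$ and $\int_B (\Pi({\vec \xi}))_{K(m)}(\cdots)\dif{\vec \xi}$ transform correctly, and it demands keeping the two bracketed index blocks $[i,\ell,{K}(m+1)]$ and ${J}(p+1)$ — grouped, but without implied skew-symmetry across them — straight throughout. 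Once the transformation law from Lemma~\ref{lemma:tensors} is in hand, the identification of the two sums with (\ref{eqn:Arank4h}) and (\ref{eqn:Nrank2h}) is immediate and the corollary follows.
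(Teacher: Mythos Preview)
Your proposal is correct and follows the same route as the paper: the corollary is stated there without a separate proof because it is precisely the algebraic reorganisation you describe --- pulling the ${\vec \xi}$-independent factors $({\vec D}_x^{2+m}G)_{[\ell,K(m+1)]}$ and $({\vec D}_z^p({\vec H}_0({\vec z})))_{J(p+1)}$ outside the integrals via $({\vec D}_x^{2+m}G\,{\vec \xi})_{K(m+1)} = ({\vec D}_x^{2+m}G)_{[\ell,K(m+1)]}\xi_\ell$ --- together with the appeal to Lemma~\ref{lemma:tensors} for tensoriality. The only superfluous step in your write-up is the triple-product identity: since ${\mathfrak A}$ is already defined as ${\vec e}_i\cdot\int_B{\vec e}_k\times(\cdots)\dif{\vec\xi}$, taking the $i$-th component is immediate and no expansion via $\varepsilon_{ik\ell}$ is needed.
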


\subsection{Results for the proof of the asymptotic formula} \label{sect:proofasymp}

It is useful to note for ${\vec x} $ away from $B_\alpha$ and ${\vec y}$ in $B_\alpha$ that $ G({\vec x}, {\vec y}) $ is smooth and analytic where we plan to use it and so the Taylor series expansions
\begin{align}
\nabla_x G({\vec x},  {\vec y}) = & \sum_{m=0}^\infty \frac{(-1)^m }{m!} ( {\vec D}_x^m ( \nabla_x G({\vec x},{\vec z})))_{K(m)} (\Pi(  {\vec y}-{\vec z}) )_{K(m)} ,\label{eqn:taylorgradg} \\
{\vec D}_x^2 G({\vec x},  {\vec y}) =  &\sum_{m=0}^\infty  \frac{(-1)^m  }{m!} ( {\vec D}_x^m ( {\vec D}_x^2 G ({\vec x},{\vec z})))_{K(m)} ( \Pi( {\vec y}-{\vec z}) )_{K(m)}  ,\label{eqn:taylord2g}
\end{align}
converge as $|{\vec y}-{\vec z}| \to 0$. Consequently, we have the estimates
\begin{align}
 & \left \| \nabla_x G({\vec x}, {\vec y} )- \sum_{m=0}^Q \frac{(-1)^m  }{m!} ( {\vec D}_x^m ( \nabla_x G({\vec x},{\vec z})))_{K(m)} ( \Pi( {\vec y} -{\vec z} ))_{K(m)}  \right \|_{L^2(B_\alpha)} \le C \alpha^{\frac{5+2Q}{2}} ,  \label{eqn:est3} \\
 & \left  \|  {\vec D}_x^2 G({\vec x},  {\vec y})
- \sum_{m=0}^S  \frac{(-1)^m }{m!} ( {\vec D}_x^m ( {\vec D}_x^2 G ({\vec x},{\vec z})))_{K(m)} ( \Pi ( {\vec y}-{\vec z} ) )_{K(m)}    \right \|_{L^2 (B_\alpha)} \le C \alpha^{\frac{5+2S}{2}} . \label{eqn:est4}
\end{align}

\subsubsection{Approximation of $\mathrm{I}$}
In similar way to ~\cite{ammarivolkov2013}, we write $\mathrm{I}=\mathrm{I}_1+\mathrm{I}_2+\mathrm{I}_3+\mathrm{I}_4$ where
\begin{align*}
\mathrm{I}_1 = & \sigma_* \int_{B_\alpha} \nabla_x G({\vec x}, {\vec y}) \times \left ( {\vec E}_\alpha ({\vec y}) -{\vec E}_0({\vec y}) -\nabla_y \phi_0 ({\vec y})- \alpha {\vec w}_0 \left ( \frac{ {\vec y}- {\vec z}}{\alpha } \right ) \right ) \dif {\vec y} , \\
\mathrm{I}_2 = & \sigma_* \int_{B_\alpha} \nabla_x G({\vec x},  {\vec y}) \times  \left ( {\vec E}_0 ({\vec y}) + \nabla_y \phi_0 ({\vec y}) - {\vec F}({\vec y}) \right ) \dif {\vec y}, \\
\mathrm{I}_3 = & \sigma_* \int_{B_\alpha} \left ( \nabla_x G ({\vec x},  {\vec y})  -
\sum_{m=0}^Q \frac{(-1)^m }{m!} ( {\vec D}_x^m ( \nabla_x G({\vec x},{\vec z})))_{K(m)} ( \Pi ( {\vec y}-{\vec z}))_{K(m)}  \right ) \times \nonumber \\
&\left ( {\vec F}({\vec y})+ \alpha{\vec w}_0 \left ( \frac{{\vec y} - {\vec z}}{\alpha} \right ) \right ) \dif {\vec y} , \\
\mathrm{I}_4 = & \sigma_* \int_{B_\alpha} \sum_{m=0}^Q \frac{(-1)^m }{m!} ( {\vec D}_x^m ( \nabla_x G({\vec x},{\vec z})))_{K(m)} ( \Pi ( {\vec y}-{\vec z}))_{K(m)} 
\times
\left ( {\vec F}({\vec y})+ \alpha{\vec w}_0 \left ( \frac{{\vec y} - {\vec z}}{\alpha} \right ) \right ) \dif {\vec y}.
\end{align*}

\begin{lemma} \label{lemma:boundi1i2i3}
We can bound $\mathrm{I}_1$, $\mathrm{I}_2$ and $\mathrm{I}_3$ as
\begin{align}
|\mathrm{I}_1| \le&  C \alpha^{3+P}   \|{\vec H}_0 \|_{W^{P+1,\infty}(B_\alpha)},  \qquad
|\mathrm{I}_2| \le  C \alpha^{3+P}   \|{\vec H}_0 \|_{W^{P+1,\infty}(B_\alpha)} , \nonumber\\
|\mathrm{I}_3| \le&  C \alpha^{3+Q}   \|{\vec H}_0 \|_{W^{P+1,\infty}(B_\alpha)} . \nonumber
\end{align}
\end{lemma}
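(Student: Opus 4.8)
The plan is to treat all three bounds uniformly as applications of the Cauchy--Schwarz inequality on $B_\alpha$: I would factor each integrand into a Green's-function part and a field part, estimate each factor separately in $L^2(B_\alpha)$, and then collect powers of $\alpha$. Two bookkeeping facts do most of the work. First, by (\ref{eqn:nudefine}) the prefactor $\sigma_*$ always pairs with two powers of $\alpha$ through the identity $\sigma_* \alpha^2 = \nu/(\omega\mu_0) = O(1)$, so every $\sigma_*$ effectively lowers the exponent of $\alpha$ by two. Second, $\nabla_x \times {\vec E}_0 = \im\omega\mu_0 {\vec H}_0$ lets me replace $\|\nabla_x\times{\vec E}_0\|_{W^{P+1,\infty}(B_\alpha)}$ by $\omega\mu_0\|{\vec H}_0\|_{W^{P+1,\infty}(B_\alpha)}$, the stray $\omega\mu_0$ cancelling against the $1/(\omega\mu_0)$ produced by $\sigma_*\alpha^2$; the remaining bounded factors $\nu$ and $|1-\mu_r^{-1}|$ are absorbed into $C$.

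For $\mathrm{I}_1$ and $\mathrm{I}_2$ the Green's-function factor is $\nabla_x G({\vec x},{\vec y})$ itself. Since ${\vec x}$ lies away from ${\vec z}$ while $B_\alpha$ shrinks to ${\vec z}$, $\nabla_x G({\vec x},{\vec y})$ is uniformly bounded for ${\vec y}\in B_\alpha$, so $\|\nabla_x G({\vec x},\cdot)\|_{L^2(B_\alpha)} \le C|B_\alpha|^{1/2}=C\alpha^{3/2}$. The field factor of $\mathrm{I}_1$ is ${\vec E}_\alpha-{\vec E}_0-\nabla\phi_0-\alpha{\vec w}_0$, bounded in $L^2(B_\alpha)$ by the second estimate of Theorem~\ref{thm:revised31} at order $\alpha^{(7+2P)/2}$; the field factor of $\mathrm{I}_2$ is ${\vec E}_0+\nabla\phi_0-{\vec F}$, bounded by (\ref{eqn:l2e0_f_gradphi}) at the same order. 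Cauchy--Schwarz then gives $|\mathrm{I}_j|\le C\sigma_*\,\alpha^{3/2}\alpha^{(7+2P)/2}\|{\vec H}_0\|_{W^{P+1,\infty}(B_\alpha)}$ for $j=1,2$; since $\tfrac{3}{2}+\tfrac{7+2P}{2}=5+P$ and the $\sigma_*$ removes two powers, this collapses to $C\alpha^{3+P}\|{\vec H}_0\|_{W^{P+1,\infty}(B_\alpha)}$.

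For $\mathrm{I}_3$ the Green's-function factor is the $Q$th order Taylor remainder of $\nabla_x G$, controlled directly by (\ref{eqn:est3}) at order $\alpha^{(5+2Q)/2}$, so the only new work is to bound $\|{\vec F}+\alpha{\vec w}_0((\cdot-{\vec z})/\alpha)\|_{L^2(B_\alpha)}$. For ${\vec F}$ I would insert (\ref{eqn:definef}): each summand is a polynomial of degree $p+1$ in $({\vec y}-{\vec z})$, with coefficient bounded by $\|{\vec H}_0\|_{W^{P,\infty}(B_\alpha)}$; since $|{\vec y}-{\vec z}|\le C\alpha$ on $B_\alpha$, the $p=0$ term dominates and $\|{\vec F}\|_{L^2(B_\alpha)}\le C\alpha^{5/2}\|{\vec H}_0\|_{W^{P,\infty}(B_\alpha)}$. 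For the second piece I would change variables ${\vec y}=\alpha{\vec \xi}+{\vec z}$, which yields $\|\alpha{\vec w}_0((\cdot-{\vec z})/\alpha)\|_{L^2(B_\alpha)}=\alpha^{5/2}\|{\vec w}_0\|_{L^2(B)}$, and then invoke the expansion (\ref{eqn:w0expand}) together with the fact that, because $\nu$ is fixed, the coefficient $\omega\sigma_*\alpha^2=\nu/\mu_0$ in the $\theta$-problem (\ref{eqn:transproblemtheta}) is $\alpha$-independent, so $\|{\vec \theta}_{J(p+1)}\|_{L^2(B)}\le C$ uniformly and $\|{\vec w}_0\|_{L^2(B)}\le C\|{\vec H}_0\|_{W^{P,\infty}(B_\alpha)}$. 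Hence $\|{\vec F}+\alpha{\vec w}_0\|_{L^2(B_\alpha)}\le C\alpha^{5/2}\|{\vec H}_0\|_{W^{P+1,\infty}(B_\alpha)}$, and Cauchy--Schwarz with $\tfrac{5+2Q}{2}+\tfrac{5}{2}=5+Q$ and the two-power reduction from $\sigma_*$ gives $|\mathrm{I}_3|\le C\alpha^{3+Q}\|{\vec H}_0\|_{W^{P+1,\infty}(B_\alpha)}$.

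The Cauchy--Schwarz splittings and the exponent arithmetic are routine; the genuine obstacle is the rigorous $L^2(B_\alpha)$ estimate of ${\vec F}+\alpha{\vec w}_0$. There one must perform the scaling of $\alpha{\vec w}_0((\cdot-{\vec z})/\alpha)$ correctly and, crucially, certify that the auxiliary solutions ${\vec \theta}_{J(p+1)}$ are bounded in $L^2(B)$ uniformly in $\alpha$ --- which is exactly what the regime $\nu=O(1)$ secures by freezing the coefficients of (\ref{eqn:transproblemtheta}) --- while also verifying that the polynomial ${\vec F}$ contributes no power of $\alpha$ below $\alpha^{5/2}$. The other point demanding care is keeping the $\sigma_*\alpha^2=\nu/(\omega\mu_0)$ pairing consistent across the three estimates, since it is precisely this that turns the naive orders $\alpha^{5+P}$ and $\alpha^{5+Q}$ into the claimed $\alpha^{3+P}$ and $\alpha^{3+Q}$.
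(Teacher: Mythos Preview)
Your proposal is correct and follows essentially the same route as the paper: Cauchy--Schwarz on $B_\alpha$, the $L^2$ energy estimate of Theorem~\ref{thm:revised31} and (\ref{eqn:l2e0_f_gradphi}) for $\mathrm{I}_1,\mathrm{I}_2$, the Taylor-remainder estimate (\ref{eqn:est3}) for $\mathrm{I}_3$, and the conversion $\sigma_*\omega\mu_0\alpha^2=\nu=O(1)$ to trade $\sigma_*$ for two powers of $\alpha$. Your treatment of $\|{\vec F}+\alpha{\vec w}_0\|_{L^2(B_\alpha)}$ is in fact more explicit than the paper's, which simply cites (\ref{eqn:definef}), (\ref{eqn:w0expand}) and (\ref{eqn:est3}); just make sure, when you write it up, that the $\omega\mu_0$ factor from $\nabla\times{\vec E}_0=\im\omega\mu_0{\vec H}_0$ is carried consistently through the intermediate bounds on ${\vec F}$ and ${\vec w}_0$ so that it visibly cancels against the $1/(\omega\mu_0)$ from $\sigma_*\alpha^2$.
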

\begin{proof}
Using Theorem~\ref{thm:revised31} we have
\begin{align}
|\mathrm{I}_1| \le &  C \alpha^{\frac{3}{2} } \sigma_* \left ( | 1 - \mu_r^{-1} | + \nu  \right ) \alpha^{\frac{7+2P}{2}} \| \nabla \times { \vec E}_0 \|_{W^{{P+1},\infty}(B_\alpha)} \nonumber\\
\le &  C k \left ( | 1 - \mu_r^{-1} | + \nu  \right ) \alpha^{ 5+P } \|   { \vec H}_0 \|_{W^{{P+1},\infty}(B_\alpha)} \nonumber,
\end{align}
where the second inequality follows from $\nabla \times {\vec E}_0 = \im \omega \mu_0 {\vec H}_0$ and (\ref{eqn:nudefine}). The final result for $\mathrm{I}_1$ follows by recalling $\mu_r =O(1)$ and $\nu =k \alpha^2= O(1)$. Next, using (\ref{eqn:l2e0_f_gradphi}) we find
\begin{align}
|\mathrm{I}_2| \le & C \alpha^{\frac{3}{2}}  \sigma_* \alpha^{\frac{7+2P}{2}  } \|  \nabla \times { \vec E}_0 \|_{W^{{P+1},\infty}(B_\alpha)} \le  C k \alpha^{ 5+P } \|   { \vec H}_0 \|_{W^{{P+1},\infty}(B_\alpha)} \nonumber,
\end{align}
and the result for $\mathrm{I}_2$ is then easily obtained. Finally, for $\mathrm{I}_3$, we have from (\ref{eqn:definef}), (\ref{eqn:w0expand}) and (\ref{eqn:est3})  that
\begin{align}
|\mathrm{I}_3| \le & C \alpha^{\frac{5+2Q}{2} } \alpha \alpha^{\frac{3}{2} } \sigma_*  \|  \nabla \times { \vec E}_0 \|_{W^{{P+1},\infty}(B_\alpha)}  \le  C k \alpha^{ 5+Q } \|   { \vec H}_0 \|_{W^{{P+1},\infty}(B_\alpha)} \nonumber,
\end{align}
and the result for $\mathrm{I}_3$ is then easily obtained.
\end{proof}


\begin{lemma} \label{lemma:m0I4}
The term corresponding to $m=0$ in $\mathrm{I}_4$ vanishes so that
\begin{align}
\mathrm{I}_4   = & \im \omega \alpha^4 \sigma_* \int_B  \sum_{m=1}^Q \frac{(-1)^m \alpha^m}{m!} ( {\vec D}_x^m ( \nabla_x G({\vec x},{\vec z})))_{K(m)} (\Pi ( {\vec \xi} ) )_{K(m)}  \times\nonumber \\
{} & \left ( \sum_{p=0}^P \mu_0 \frac{\alpha^p}{p!(p+2)}   ({\vec D}_z^p ({\vec H}_0({\vec z})))_{{J}(p+1)}(  {\vec \theta}_{{J}(p+1)}  + (\Pi ( {\vec \xi}))_{J(p)} {\vec e}_j \times {\vec \xi} )  \right ) \dif {\vec \xi}  . \nonumber
\end{align}
\end{lemma}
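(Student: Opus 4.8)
The plan is to show that the $m=0$ summand of $\mathrm{I}_4$ vanishes identically; the quoted form of $\mathrm{I}_4$ then follows by inserting the expansions (\ref{eqn:definef}), (\ref{eqn:w0expand}) of ${\vec F}$ and ${\vec w}_0$, using $\nabla_z\times{\vec E}_0({\vec z})=\im\omega\mu_0{\vec H}_0({\vec z})$, and rescaling ${\vec y}=\alpha{\vec \xi}+{\vec z}$ (so that $\dif{\vec y}=\alpha^3\dif{\vec \xi}$ and $(\Pi({\vec y}-{\vec z}))_{K(m)}=\alpha^m(\Pi({\vec \xi}))_{K(m)}$). For $m=0$ we have $K(0)=\emptyset$, hence $(\Pi({\vec y}-{\vec z}))_{K(0)}=1$ and $({\vec D}_x^0(\nabla_x G({\vec x},{\vec z})))_{K(0)}=\nabla_x G({\vec x},{\vec z})$, a vector independent of the integration variable ${\vec y}$. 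Pulling it out of the integral, the $m=0$ summand equals $\sigma_*\,\nabla_x G({\vec x},{\vec z})\times\int_{B_\alpha}\left({\vec F}({\vec y})+\alpha{\vec w}_0(({\vec y}-{\vec z})/\alpha)\right)\dif{\vec y}$, so it suffices to prove this volume integral vanishes.

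First I would rescale to $B$ and substitute the expansions, reducing the claim to showing, for each $p$, that $\int_B\left({\vec \theta}_{J(p+1)}+(\Pi({\vec \xi}))_{J(p)}\,{\vec e}_j\times{\vec \xi}\right)\dif{\vec \xi}$, once contracted with $({\vec D}_z^p({\vec H}_0({\vec z})))_{J(p+1)}$ and summed over the repeated indices, is zero. The transmission problem (\ref{eqn:transproblemtheta}) gives in $B$ the identity $\im\omega\sigma_*\alpha^2\left({\vec \theta}_{J(p+1)}+(\Pi({\vec \xi}))_{J(p)}{\vec e}_j\times{\vec \xi}\right)=\nabla_\xi\times\left(\mu_*^{-1}\nabla_\xi\times{\vec \theta}_{J(p+1)}\right)$, so by $\int_B\nabla_\xi\times{\vec Q}\,\dif{\vec \xi}=\int_\Gamma{\vec n}\times{\vec Q}\,\dif S$ the integral in question is proportional to $\int_\Gamma{\vec n}\times\left(\mu_*^{-1}\nabla_\xi\times{\vec \theta}_{J(p+1)}\right)|_-\,\dif S$.

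Next I would invoke the jump condition in (\ref{eqn:transproblemtheta}). Writing the interior trace as the exterior trace minus the jump, this boundary integral splits into $\int_\Gamma{\vec n}\times\left(\mu_0^{-1}\nabla_\xi\times{\vec \theta}_{J(p+1)}\right)|_+\,\dif S$ plus $(p+2)[\mu^{-1}]_\Gamma\int_\Gamma(\Pi({\vec \xi}))_{J(p)}\,{\vec n}\times{\vec e}_j\,\dif S$. The first piece vanishes: applying the same curl identity on $B_R\setminus B$, using $\nabla_\xi\times\mu_0^{-1}\nabla_\xi\times{\vec \theta}_{J(p+1)}={\vec 0}$ exterior to $B$ and letting $R\to\infty$, the surface term over $|{\vec \xi}|=R$ tends to zero since ${\vec \theta}_{J(p+1)}$ decays faster than $|{\vec \xi}|^{-1}$ (dipole-like, as noted after (\ref{eqn:efieldunique})). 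For the second piece, the gradient theorem $\int_\Gamma f\,n_a\,\dif S=\int_B\partial_{\xi_a}f\,\dif{\vec \xi}$ together with $({\vec n}\times{\vec e}_j)_i=\varepsilon_{iaj}n_a$ gives its $i$-th component as $\varepsilon_{iaj}\int_B\partial_{\xi_a}(\Pi({\vec \xi}))_{J(p)}\,\dif{\vec \xi}=\sum_{\ell=1}^p\varepsilon_{ij_\ell j}\int_B\prod_{k\ne\ell}\xi_{j_k}\,\dif{\vec \xi}$.

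The final, decisive step is the contraction with $({\vec D}_z^p({\vec H}_0({\vec z})))_{J(p+1)}=\partial_{z_{j_1}}\cdots\partial_{z_{j_p}}({\vec H}_0({\vec z}))_j$. For each $\ell$, summing over the pair $(j,j_\ell)$ produces $\varepsilon_{ij_\ell j}\partial_{z_{j_\ell}}({\vec H}_0({\vec z}))_j=(\nabla_z\times{\vec H}_0({\vec z}))_i$, with the remaining derivatives and the monomial integral as spectators. Since ${\vec J}_0$ is supported away from $B_\alpha$, the background field satisfies $\nabla\times{\vec H}_0={\vec 0}$ there, so $(\nabla_z\times{\vec H}_0({\vec z}))_i$ and all its derivatives vanish; hence every $\ell$-term, and therefore the whole $m=0$ summand, is zero. (For $p=0$ the integrand collapses to $\int_\Gamma{\vec n}\times{\vec e}_j\,\dif S=\left(\int_\Gamma{\vec n}\,\dif S\right)\times{\vec e}_j={\vec 0}$ outright, recovering the leading-order situation of~\cite{ammarivolkov2013}.) I expect the main obstacle to be the boundary-integral reduction: fixing the sign conventions in the jump relation and justifying that the exterior contribution at infinity genuinely vanishes. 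The key realisation is that the surviving boundary term, although nonzero term by term for $p\ge1$, is annihilated once contracted with the derivatives of the curl-free background field ${\vec H}_0$.
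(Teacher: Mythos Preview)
Your proof is correct and follows essentially the same route as the paper's: both use the PDE in $B$ from the transmission problem~(\ref{eqn:transproblemtheta}) to rewrite $\int_B(\vec\theta_{J(p+1)}+(\Pi(\vec\xi))_{J(p)}\vec e_j\times\vec\xi)\,\dif\vec\xi$ as a boundary term, invoke the jump condition to reduce it to $[\mu^{-1}]_\Gamma\int_B\nabla_\xi\times((\Pi(\vec\xi))_{J(p)}\vec e_j)\,\dif\vec\xi$, and then observe that this curl vanishes after contraction with $(\vec D_z^p(\vec H_0(\vec z)))_{J(p+1)}$ because $\nabla\times\vec H_0=\vec 0$ near~$\vec z$. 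The only organisational differences are that the paper reaches the jump term by adding the (identically zero) exterior volume integral $\int_{B^c}\nabla_\xi\times\mu_0^{-1}\nabla_\xi\times\vec\theta_{J(p+1)}\,\dif\vec\xi$ in one step rather than splitting interior trace into exterior trace plus jump, and that the paper exhibits the vanishing of the curl pointwise (their~(\ref{eqn:curlXizero})) rather than after integration; your version is slightly more explicit about the contribution at infinity, which the paper passes over silently.
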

\begin{proof}
The term corresponding to $m=0$ in $\mathrm{I}_4$ is
\begin{align}
  \im \omega \alpha^4  \sigma_* & \int_B   \nabla_x G({\vec x},{\vec z} )   \times  \left ( \sum_{p=0}^P \mu_0 \frac{\alpha^p}{p!(p+2)}   ({\vec D}_z^p ({\vec H}_0({\vec z})))_{{J}(p+1)}(  {\vec \theta}_{{J}(p+1)} + (\Pi ( {\vec \xi}))_{J(p)} {\vec e}_j \times {\vec \xi} )  \right ) \dif {\vec \xi} \nonumber \\
  = &  \im \omega \sigma_* \mu_0 \alpha^4 \sum_{p=0}^P   \frac{\alpha^p}{p!(p+2) }   ({\vec D}_z^p ({\vec H}_0({\vec z})))_{{J}(p+1)}  
  \nabla_x G({\vec x},{\vec z} )   \times   \int_B (  {\vec \theta}_{ {J}(p+1) } 
    + (\Pi ( {\vec \xi}))_{J(p)} {\vec e}_j \times {\vec \xi} )  \dif {\vec \xi} . \label{eqn:mzero}
\end{align}
By  applying integration by parts and using the transmission problem (\ref{eqn:transproblemtheta}) we have
\begin{align}
\int_B &(  {\vec \theta}_{ \tilde{J}(p) } 
    + (\Pi( {\vec \xi}))_{J(p)} {\vec e}_j \times {\vec \xi} )  \dif {\vec \xi} = \frac{1}{\im \omega \sigma_* \alpha^2 } \int_B \nabla_\xi \times \mu_*^{-1} \nabla_\xi \times {\vec \theta}_{ {J}(p+1)} \dif  {\vec \xi} \nonumber \\
 &=    \frac{1}{\im \omega \sigma_* \alpha^2 } \int_{B\cup B^c} \nabla_\xi \times \mu^{-1} \nabla_\xi \times {\vec \theta}_{ {J}(p+1)} \dif  {\vec \xi} = \frac{1}{\im \omega \sigma_* \alpha^2 }\int_\Gamma [ \mu^{-1} \nabla_\xi \times  {\vec \theta}_{{J}(p+1)}  \times {\vec n}^+ ]_\Gamma \dif {\vec \xi} \nonumber \\
  &=    - \frac{(p+2) }{\im \omega \sigma_* \alpha^2  } [ \mu^{-1}  ]_\Gamma \int_B \nabla_\xi \times ( (\Pi(  {\vec \xi}))_{J(p)} {\vec e}_j )  \dif {\vec \xi} \label{eqn:intbypartsontheta},
   \end{align} 
   where $B^c:= {\mathbb R}^3 \setminus B$.   Using the alternating tensor $\varepsilon$, whose coefficients satisfy
\begin{align}
\varepsilon_{ijk} := \left \{ \begin{array}{rl} 1 & \text{if $(i,j,k)$ is a cyclic permutation of $(1,2,3)$} \\
-1 & \text{if $(i,j,k)$ is an anti-cyclic permutation of $(1,2,3)$} \\
0  & \text{if  any of $i,j,k$ are equal}  \end{array} \right . \label{eqn:altensor} ,
\end{align}
we find that 
\begin{align}
( \nabla_\xi \times & (  ({\vec D}_z^p ({\vec H}_0({\vec z})))_{{J}(p+1)} (\Pi (  {\vec \xi}))_{J(p)}{\vec e}_j  ))_i =  \varepsilon_{ikj}  ({\vec D}_z^p ({\vec H}_0({\vec z})))_{{J}(p+1)}  \frac{\partial}{\partial \xi_k}(  \Pi( {\vec \xi}))_{J(p)} ) \nonumber \\
& =\varepsilon_{ikj}  ({\vec D}_z^p ({\vec H}_0({\vec z})))_{{J}(p+1)} \left ( \delta_{k j_1} \xi_{j_2} \cdots \xi_{j_p} + \cdots + \xi_{j_1} \cdots \xi_{j_{p-1}} \delta_{k j_p} \right ) \nonumber \\
& = p \varepsilon_{ikj}   ({\vec D}_z^p ({\vec H}_0({\vec z})))_{ [ j, k, j_2, \cdots, j_p ]}  ( \xi_{j_2} \cdots \xi_{j_p}   ) =0 \label{eqn:curlXizero},
\end{align}
since $\varepsilon_{ikj} = -\varepsilon_{ijk}$ and $({\vec D}_z^p ({\vec H}_0({\vec z})))_{ [ j, k, j_2, \cdots , j_p]} = ({\vec D}_z^p ({\vec H}_0({\vec z})))_{ [ k, j, j_2, \cdots, j_p]} $. The desired result then immediately follows from (\ref{eqn:mzero}), (\ref{eqn:intbypartsontheta}) and  (\ref{eqn:curlXizero}).

\end{proof}


 \begin{lemma}  \label{lemma:boundI}
 The integral $\mathrm{I}$ can be expressed as 
 \begin{align}
\mathrm{I} = - \im \nu \alpha^3 \sum_{m=0}^{M-1} \sum_{p=0}^{M-1-m} \frac{(-1)^m \alpha^{p+m}}{p!(m+1)! (p+2)} \int_B  ({\vec D}_x^{2+m} G({\vec x},{\vec z})  {\vec \xi} )_{K(m+1)}  (\Pi({\vec \xi}))_{K(m)}{\vec e}_k  \times \nonumber \\
 \left ( ({\vec D}_z^p ({\vec H}_0({\vec z})))_{{J}(p+1)}  (   {\vec \theta}_{{J}(p+1)}  + (\Pi ( {\vec \xi}))_{J(p)} {\vec e}_j \times {\vec \xi} )  \right ) \dif {\vec \xi} + {\vec R}({\vec x}),
 \end{align}
 where $|{\vec R}({\vec x})| \le C\alpha^{3+M}   \| {\vec H}_0 \|_{W^{{M+1}, \infty} (B_\alpha)}  $  .
  \end{lemma}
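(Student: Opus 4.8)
The plan is to start from the four–term splitting $\mathrm{I} = \mathrm{I}_1 + \mathrm{I}_2 + \mathrm{I}_3 + \mathrm{I}_4$ introduced just before Lemma~\ref{lemma:boundi1i2i3}, to retain only the genuinely contributing piece $\mathrm{I}_4$, whose explicit form is supplied by Lemma~\ref{lemma:m0I4}, and to absorb $\mathrm{I}_1$, $\mathrm{I}_2$ and $\mathrm{I}_3$ into ${\vec R}({\vec x})$. First I would fix the truncation parameters by setting $P = Q = M$. With this choice the estimates of Lemma~\ref{lemma:boundi1i2i3} become $|\mathrm{I}_1|, |\mathrm{I}_2| \le C\alpha^{3+M}\|{\vec H}_0\|_{W^{M+1,\infty}(B_\alpha)}$ and $|\mathrm{I}_3| \le C\alpha^{3+M}\|{\vec H}_0\|_{W^{M+1,\infty}(B_\alpha)}$, so that $\mathrm{I}_1 + \mathrm{I}_2 + \mathrm{I}_3$ is already of the residual order claimed and can be placed in ${\vec R}({\vec x})$.

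Second, I would recast $\mathrm{I}_4$ into the form demanded by the statement. Starting from Lemma~\ref{lemma:m0I4}, whose outer sum runs over $m = 1,\ldots,Q$, I would shift the index $m \mapsto m+1$ so that the sum runs over $m = 0,\ldots,Q-1$ and the factor $({\vec D}_x^m(\nabla_x G))_{K(m)}$ becomes a derivative of $G$ of order $2+m$. Using the symmetry of the Hessian-type tensor ${\vec D}_x^{2+m} G$ under permutation of its indices, the vector $({\vec D}_x^{m+1}(\nabla_x G))_{K(m+1)}(\Pi({\vec \xi}))_{K(m+1)}$ produced by the shift is identified with $({\vec D}_x^{2+m} G\,{\vec \xi})_{K(m+1)}(\Pi({\vec \xi}))_{K(m)}{\vec e}_k$, one factor of ${\vec \xi}$ being contracted against the extra derivative index; this is the origin of the $({\vec D}_x^{2+m}G\,{\vec \xi})_{K(m+1)}$ structure in the statement. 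The scalar prefactor $\im\omega\alpha^4\sigma_*\mu_0\,\frac{(-1)^{m+1}\alpha^{m+1}}{(m+1)!}\,\frac{\alpha^p}{p!(p+2)}$ is then simplified using $\im\omega\sigma_*\mu_0 = \im k = \im\nu/\alpha^2$ from (\ref{eqn:nudefine}), which collapses it to $-\im\nu\alpha^3\,\frac{(-1)^m\alpha^{m+p}}{p!(m+1)!(p+2)}$, exactly the coefficient appearing in the claimed expression.

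Third, I would split the resulting double sum over $\{0\le m\le M-1,\ 0\le p\le M\}$ into the triangular part $\{m+p\le M-1\}$, which reproduces the stated sum verbatim, and the remaining terms with $m+p\ge M$. Each remaining term carries a factor $\alpha^{3+m+p}$ with $m+p\ge M$, hence is $O(\alpha^{3+M})$ for $\alpha\le 1$; its integral over the fixed domain $B$ is bounded independently of $\alpha$ and the coefficient $|({\vec D}_z^p({\vec H}_0({\vec z})))_{{J}(p+1)}|$ is controlled by $\|{\vec H}_0\|_{W^{M+1,\infty}(B_\alpha)}$ for $p\le M$, so these finitely many terms likewise enter ${\vec R}({\vec x})$ within the required order.

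The step I expect to be the main obstacle is justifying that the integrals $\int_B({\vec D}_x^{2+m}G\,{\vec \xi})_{K(m+1)}(\Pi({\vec \xi}))_{K(m)}{\vec e}_k\times({\vec \theta}_{{J}(p+1)} + \ldots)\,\dif{\vec \xi}$ are bounded uniformly in $\alpha$, since ${\vec \theta}_{{J}(p+1)}$ solves the $\alpha$-dependent transmission problem (\ref{eqn:transproblemtheta}). The saving observation is that the only $\alpha$-dependence in that problem enters through $\im\omega\sigma_*\alpha^2 = \im\nu/\mu_0$, which is held $O(1)$ by the hypothesis $\nu = O(1)$ in (\ref{eqn:nudefine}); a standard a priori estimate for the transmission problem with uniformly bounded coefficients then gives $\|{\vec \theta}_{{J}(p+1)}\|$ bounded independently of $\alpha$, and since $({\vec D}_x^{2+m}G\,{\vec \xi})_{K(m+1)}$ and $(\Pi({\vec \xi}))_{K(m)}$ are bounded for ${\vec x}$ away from ${\vec z}$ and ${\vec \xi}\in B$, the integrals are indeed $O(1)$. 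Combining the triangular sum with the residual contributions from $\mathrm{I}_1,\mathrm{I}_2,\mathrm{I}_3$ and the discarded tail of $\mathrm{I}_4$ then yields the stated expression with $|{\vec R}({\vec x})|\le C\alpha^{3+M}\|{\vec H}_0\|_{W^{M+1,\infty}(B_\alpha)}$.
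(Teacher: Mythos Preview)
Your proposal is correct and follows essentially the same route as the paper: set $P=Q=M$, absorb $\mathrm{I}_1,\mathrm{I}_2,\mathrm{I}_3$ via Lemma~\ref{lemma:boundi1i2i3}, apply Lemma~\ref{lemma:m0I4} to $\mathrm{I}_4$, shift $m\mapsto m+1$, and split the resulting double sum into the triangular part plus a remainder bounded by $C\nu\alpha^{3+M}\|{\vec H}_0\|_{W^{M+1,\infty}(B_\alpha)}$. Your additional paragraph justifying the $\alpha$-uniform boundedness of the ${\vec\theta}_{J(p+1)}$-integrals via $\im\omega\sigma_*\alpha^2=\im\nu/\mu_0=O(1)$ makes explicit a point the paper leaves implicit.
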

 
 \begin{proof}
Recall that $\mathrm{I}=\mathrm{I}_1+\mathrm{I}_2+\mathrm{I}_3 +\mathrm{I}_4$ and choose $P=Q=M$. We then see from Lemma~\ref{lemma:boundi1i2i3} that $\mathrm{I}_1$, $\mathrm{I}_2$ and $\mathrm{I}_3$ all form part of ${\vec R}({\vec x})$. Using Lemma ~\ref{lemma:m0I4} we find that
\begin{align}
\mathrm{I}_4  = & \im \omega \alpha^4 \sigma_* \int_B  \sum_{m=1}^{M} \frac{(-1)^m \alpha^m}{m!} ( {\vec D}_x^m ( \nabla_x G({\vec x},{\vec z})))_{K(m)} (\Pi( {\vec \xi}))_{K(m)} \times \nonumber \\
{} & \left ( \sum_{p=0}^{M} \mu_0 \frac{\alpha^p}{p!(p+2)}   ({\vec D}_z^p ({\vec H}_0({\vec z})))_{{J}(p+1)}(  {\vec \theta}_{{J}(p+1)} +(\Pi (  {\vec \xi}))_{J(p)} {\vec e}_j \times {\vec \xi} )  \right ) \dif {\vec \xi}   \nonumber \\
= & - \im \nu \alpha^3  \sum_{m=0}^{M-1} \sum_{p=0}^{M-1-m}  \frac{(-1)^m \alpha^{m+p}}{(m+1)! p! (p+2)} 
\int_B  ({\vec D}_x^{m+2}  G({\vec x},{\vec z} ) {\vec \xi})_{{K}(m+1)} ( \Pi( {\vec \xi}))_{K(m)} {\vec e}_k \times \nonumber \\
&  \left ( ({\vec D}_z^p ({\vec H}_0({\vec z})))_{{J}(p+1)}(  {\vec \theta}_{{J}(p+1)} + (\Pi ({\vec \xi}))_{J(p)} {\vec e}_j \times {\vec \xi} )  \right ) \dif {\vec \xi}   +{\vec R}_{\mathrm{I}}({\vec x}) , \nonumber
\end{align}
where
\begin{align}
{\vec R}_{\mathrm{I}}({\vec x}) = & -  \im \nu \alpha^3 \sum_{m=0}^{M-1}  \sum_{p=M-m}^{M} \frac{(-1)^m \alpha^{m+p}}{(m+1)! p! (p+2)} 
\int_B  ({\vec D}_x^{m+2}   G({\vec x},{\vec z}  ) {\vec \xi})_{ {K}(m+1)} ( \Pi  ( {\vec \xi}))_{K(m)} {\vec e}_k \times \nonumber \\
&  \left ( ({\vec D}_z^{p} ({\vec H}_0({\vec z})))_{{J}(p+1)}(  {\vec \theta}_{{J}(p+1)}  + ( \Pi (  {\vec \xi}))_{J(p)} {\vec e}_j \times {\vec \xi} )  \right ) \dif {\vec \xi}  ,\nonumber
\end{align}
and $| {\vec R}_{\mathrm{I}}({\vec x}) |  \le  C \nu \alpha^3 \alpha^{M}\| {\vec H}_0 \|_{W^{{M+1}, \infty} (B_\alpha)}$. Consequently, ${\vec R}_{\mathrm{I}}({\vec x})$ forms part of ${\vec R}({\vec x})$.
 \end{proof}
 
 \subsubsection{Approximation of $\mathrm{II}$}
In similar way to ~\cite{ammarivolkov2013}, we write $\mathrm{II}=\left ( 1- \frac{\mu_*}{\mu_0} \right )\left (\mathrm{II}_1+\mathrm{II}_2+\mathrm{II}_3+\mathrm{II}_4 \right )$ where
\begin{align*}
\mathrm{II}_1   = & - \int_{B_\alpha} {\vec D}_x^2 G({\vec x}, {\vec y}) \left ( {\vec H}_\alpha ({\vec y}) - \frac{\mu_0}{\mu_*} {\vec H}_0({\vec y}) - \frac{\mu_0}{\mu_*} {\vec H}_0^* \left ( \frac{ {\vec y}-{\vec z}}{\alpha} \right ) \right ) \dif {\vec y} ,\\
\mathrm{II}_2   = & -\frac{\mu_0}{\mu_*} \int_{B_\alpha}  \left ( {\vec D}_x^2 G({\vec x}, {\vec y}) -  
\sum_{m=0}^S  \frac{(-1)^m }{m!} ( {\vec D}_x^m ( {\vec D}_x^2 G ({\vec x},{\vec z})))_{K(m)}  ( \Pi ( {\vec y}-{\vec z} ) )_{K(m)}  \right )\nonumber \\
&\left (  {\vec H}_0({\vec y}) + {\vec H}_0^* \left ( \frac{ {\vec y}-{\vec z}}{\alpha} \right ) \right ) \dif {\vec y }  , \\
\mathrm{II}_3   =& -\frac{\mu_0}{\mu_*} \int_{B_\alpha} \sum_{m=0}^S  \frac{(-1)^m }{m!} ( {\vec D}_x^m ( {\vec D}_x^2 G ({\vec x},{\vec z})))_{K(m)} ( \Pi ( {\vec y}-{\vec z}) )_{K(m)} \nonumber\\
&\left  ( {\vec H}_0 ({\vec y} ) - \sum_{p=0}^T \frac{1 }{p!} ({\vec D}_z^p ({\vec H}_0({\vec z})))_{{J}(p+1)} ( \Pi ( {\vec y}-{\vec z})  )_{J(p)} {\vec e}_j \right ) \dif {\vec y}   , \\
\mathrm{II}_4   =& -\frac{\mu_0}{\mu_*} \int_{B_\alpha} \sum_{m=0}^S  \frac{(-1)^m }{m!} ( {\vec D}_x^m ( {\vec D}_x^2 G ({\vec x},{\vec z})))_{K(m)} ( \Pi ( {\vec y}-{\vec z} ) )_{K(m)} \nonumber \\
&\left   ( \sum_{p=0}^T \frac{1 }{p!} ({\vec D}_z^p ({\vec H}_0({\vec z})))_{{J}(p+1)} ( \Pi ( {\vec y}-{\vec z})  )_{J(p)} {\vec e}_j + {\vec H}_0^* \left ( \frac{ {\vec y}-{\vec z}}{\alpha} \right ) \right ) \dif {\vec y} ,
\end{align*}
and
\begin{equation}
{\vec H}_0^*( {\vec \xi}) = \frac{1}{\im \omega \mu_0} \nabla_\xi \times {\vec w}_0({\vec \xi}). \nonumber
\end{equation}

\begin{lemma} \label{lemma:boundII1II2II3}
We can bound $\mathrm{II}_1 $, $ \mathrm{II}_2 $ and $ \mathrm{II}_3 $ as
\begin{align}
|\mathrm{II}_1 |  \le & C \alpha^{4+P} \|  { \vec H}_0 \|_{W^{{P+1},\infty}(B_\alpha)} , \qquad 
|\mathrm{II}_2 \le  C  \alpha^{4+S}  \| {\vec H}_0 \|_ {W^{{P+1},\infty}(B_\alpha)} , \nonumber\\
|\mathrm{II}_3  | \le & C \alpha^{4+T} \| { \vec H}_0 \|_{W^{{T+1},\infty}(B_\alpha)}  \nonumber.
\end{align}
\end{lemma}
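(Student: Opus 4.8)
The plan is to bound all three terms by Cauchy--Schwarz, mirroring exactly the treatment of $\mathrm{I}_1,\mathrm{I}_2,\mathrm{I}_3$ in Lemma~\ref{lemma:boundi1i2i3}; the genuinely new inputs are Theorem~\ref{thm:revised31} for $\mathrm{II}_1$ and the Taylor estimate~(\ref{eqn:est4}) for $\mathrm{II}_2$. Throughout I will use that, since ${\vec x}$ is away from ${\vec z}$, the kernel ${\vec D}_x^2 G({\vec x},{\vec y})$ and its ${\vec x}$-derivatives are bounded on $B_\alpha$, so that $\|{\vec D}_x^2 G({\vec x},\cdot)\|_{L^2(B_\alpha)}\le C\alpha^{3/2}$ because $|B_\alpha| = O(\alpha^3)$.

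For $\mathrm{II}_1$ the crucial step is an algebraic rewriting of the bracketed integrand as a single curl. Using ${\vec H}_\alpha = (\im\omega\mu_*)^{-1}\nabla\times{\vec E}_\alpha$ in $B_\alpha$, ${\vec H}_0 = (\im\omega\mu_0)^{-1}\nabla\times{\vec E}_0$, and the definition ${\vec H}_0^*({\vec \xi}) = (\im\omega\mu_0)^{-1}\nabla_\xi\times{\vec w}_0({\vec \xi})$ together with ${\vec w}({\vec y}) = \alpha{\vec w}_0(({\vec y}-{\vec z})/\alpha)$ (so that $\nabla_y\times{\vec w} = (\nabla_\xi\times{\vec w}_0)(({\vec y}-{\vec z})/\alpha)$), I would verify that
\[
{\vec H}_\alpha - \tfrac{\mu_0}{\mu_*}{\vec H}_0 - \tfrac{\mu_0}{\mu_*}{\vec H}_0^*\Big(\tfrac{{\vec y}-{\vec z}}{\alpha}\Big) = \tfrac{1}{\im\omega\mu_*}\nabla_y\times\Big({\vec E}_\alpha - {\vec E}_0 - \nabla\phi_0 - \alpha{\vec w}_0\big(\tfrac{{\vec y}-{\vec z}}{\alpha}\big)\Big),
\]
the gradient $\nabla\phi_0$ being inserted freely since it is curl-free. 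Cauchy--Schwarz then gives $|\mathrm{II}_1| \le (\omega\mu_*)^{-1}\|{\vec D}_x^2 G\|_{L^2(B_\alpha)}\,\|\nabla\times({\vec E}_\alpha - {\vec E}_0 - \alpha{\vec w}_0)\|_{L^2(B_\alpha)}$, and I would feed in Theorem~\ref{thm:revised31}, replace $\|\nabla\times{\vec E}_0\|_{W^{P+1,\infty}}$ by $\omega\mu_0\|{\vec H}_0\|_{W^{P+1,\infty}}$, and use $\mu_r=O(1)$, $\nu=O(1)$ from~(\ref{eqn:nudefine}). Collecting $\alpha^{3/2}\cdot\alpha^{(5+2P)/2} = \alpha^{4+P}$ yields the claimed bound.

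For $\mathrm{II}_2$, Cauchy--Schwarz splits off the factor ${\vec D}_x^2 G$ minus its order-$S$ Taylor polynomial, which is controlled by $C\alpha^{(5+2S)/2}$ via~(\ref{eqn:est4}). The remaining factor $\|{\vec H}_0 + {\vec H}_0^*(\cdot/\alpha)\|_{L^2(B_\alpha)}$ I would bound by $C\alpha^{3/2}\|{\vec H}_0\|_{W^{P+1,\infty}(B_\alpha)}$: the ${\vec H}_0$ part is immediate from $|B_\alpha|=O(\alpha^3)$, and for the ${\vec H}_0^*$ part I change variables ${\vec y} = \alpha{\vec \xi}+{\vec z}$, giving $\|{\vec H}_0^*(\cdot/\alpha)\|_{L^2(B_\alpha)} = \alpha^{3/2}\|{\vec H}_0^*\|_{L^2(B)}$, and then invoke~(\ref{eqn:w0expand}) — the ${\vec \theta}$-fields are fixed and $\alpha<1$, so the $p=0$ term dominates and $\|{\vec H}_0^*\|_{L^2(B)}\le C\|{\vec H}_0\|_{W^{P+1,\infty}}$. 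Multiplying $\alpha^{(5+2S)/2}\cdot\alpha^{3/2} = \alpha^{4+S}$ gives the stated estimate. For $\mathrm{II}_3$, I would bound the truncated $G$-polynomial pointwise by a constant (the $m=0$ term dominates, the higher terms carrying extra powers of $|{\vec y}-{\vec z}|\le C\alpha$), and bound the second factor, which is exactly the order-$T$ Taylor remainder of ${\vec H}_0$ about ${\vec z}$, by $C\alpha^{T+1}\|{\vec H}_0\|_{W^{T+1,\infty}(B_\alpha)}$ via Taylor's theorem. Integrating a quantity of size $\alpha^{T+1}$ over $B_\alpha$ contributes a further factor $|B_\alpha| = O(\alpha^3)$, giving $C\alpha^{4+T}\|{\vec H}_0\|_{W^{T+1,\infty}}$.

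The only delicate point is the curl identity underlying $\mathrm{II}_1$: one must carefully track the permeability and frequency prefactors so that all three contributions recombine under the single operator $\nabla_y\times(\,\cdot\,)$, which is precisely what permits Theorem~\ref{thm:revised31} to be applied. The remaining estimates for $\mathrm{II}_2$ and $\mathrm{II}_3$ are then routine scaling arguments.
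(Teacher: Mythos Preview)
Your proposal is correct and follows essentially the same approach as the paper. In particular, the curl identity you spell out for $\mathrm{II}_1$ is exactly how the paper deduces the $L^2$ bound on ${\vec H}_\alpha - \tfrac{\mu_0}{\mu_*}{\vec H}_0 - \tfrac{\mu_0}{\mu_*}{\vec H}_0^*$ from Theorem~\ref{thm:revised31}, and your treatment of $\mathrm{II}_2$ and $\mathrm{II}_3$ matches the paper's use of (\ref{eqn:est4}) and the Taylor-remainder estimate analogous to (\ref{eqn:curlh0_curlfl2}); the only caveat is that the ${\vec \theta}$-fields are not literally ``fixed'' but depend on $\alpha$ through $\nu=\omega\mu_0\sigma_*\alpha^2$, so you should say they are \emph{uniformly bounded} for $\nu=O(1)$, which is all that is needed.
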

\begin{proof}
From Theorem~\ref{thm:revised31}, we have
\begin{align}
\left \|  {\vec H}_\alpha -\frac{\mu_0}{\mu_*} {\vec H}_0 -\frac{ \alpha}{\im \omega \mu_*} \nabla_x \times  {\vec w}_0 \left ( \frac{{\vec x} - {\vec z}}{\alpha} \right )  \right \|_{L^2(B_\alpha)} \le C &
 \left ( | 1 - \mu_r^{-1} | + \nu  \right ) \alpha^{\frac {5+2P}{2}} \|  { \vec H}_0 \|_{W^{{P+1},\infty}(B_\alpha)}, \nonumber
\end{align}
and so for $\mathrm{II}_1 $ we find that
\begin{align}
|\mathrm{II}_1 | \le & C \alpha^{\frac{3}{2}} \left ( | 1 - \mu_r^{-1} | + \nu  \right ) \alpha^{\frac {5+2P}{2}} \|  { \vec H}_0 \|_{W^{{P+1},\infty}(B_\alpha)} \nonumber \\
\le & C  \left ( | 1 - \mu_r^{-1} | + \nu  \right ) \alpha^{4+P} \|  { \vec H}_0 \|_{W^{{P+1},\infty}(B_\alpha)} \le  C   \alpha^{4+P} \|  { \vec H}_0 \|_{W^{{P+1},\infty}(B_\alpha)}. \nonumber
\end{align}
For $\mathrm{II}_2$, we have from (\ref{eqn:est4}) and~\cite{ammarivolkov2013} that
\begin{align}
|\mathrm{II}_2 | \le & C \alpha^{\frac{5+2S}{2}} \alpha^{ \frac{3}{2} } \| {\vec H}_0 \|_ {W^{{P+1},\infty}(B_\alpha)} \le  C  \alpha^{4+S}  \| {\vec H}_0 \|_ {W^{{P+1},\infty}(B_\alpha)} \nonumber .
\end{align}
Finally, for $|\mathrm{II}_3 |$, in a similar manner to (\ref{eqn:curlh0_curlfl2}) we have 
\begin{align}
|\mathrm{II}_3 | \le & C  \alpha^{\frac{5+2T}{2}} \alpha^{\frac{3}{2}} \| { \vec H}_0 \|_{W^{{T+1},\infty}(B_\alpha)} \le  C \alpha^{4+T} \| { \vec H}_0 \|_{W^{{T+1},\infty}(B_\alpha)} \nonumber .
\end{align}
\end{proof}

\begin{lemma} \label{lemma:boundII}
 The integral $\mathrm{II}$ can be expressed as 
\begin{align*}
\mathrm{II}= &
 \left ( 1 - \frac{\mu_0}{\mu_*} \right )  \sum_{m=0}^{M-1} \sum_{p=0}^{M-1-m}
\frac{ \alpha^{3+m+p} (-1)^m}{p! m!} ( {\vec D}_x^m ( {\vec D}_x^2 G ({\vec x},{\vec z})))_{[i,K(m+1)]} ({\vec e}_i \otimes {\vec e}_k)  \nonumber \\
& \int_B ( \Pi ( {\vec \xi}))_{K(m)}  ({\vec D}_z^p ({\vec H}_0({\vec z})))_{{J}(p+1)} \left ( \frac{1}{p+2} \nabla_\xi \times {\vec \theta}_{{J}(p+1) } +( \Pi ( {\vec \xi}))_{J(p)} {\vec e}_j  \right ) \dif {\vec \xi} 
 + {\vec R}({\vec x}),
 \end{align*}
 where $|{\vec R}({\vec x})| \le C\alpha^{3+M}   \| {\vec H}_0 \|_{W^{{M+1}, \infty} (B_\alpha)}  $  .
\end{lemma}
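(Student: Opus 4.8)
The plan is to mirror the treatment of $\mathrm{I}$ in Lemma~\ref{lemma:boundI}, working from the decomposition $\mathrm{II} = \left(1-\frac{\mu_*}{\mu_0}\right)(\mathrm{II}_1+\mathrm{II}_2+\mathrm{II}_3+\mathrm{II}_4)$ together with the bounds supplied by Lemma~\ref{lemma:boundII1II2II3}. First I would fix the truncation orders by taking $P=S=T=M-1$. With this choice Lemma~\ref{lemma:boundII1II2II3} gives $|\mathrm{II}_1|,|\mathrm{II}_2|,|\mathrm{II}_3|\le C\alpha^{3+M}\|{\vec H}_0\|_{W^{M+1,\infty}(B_\alpha)}$, and since the outer prefactor $1-\mu_*/\mu_0$ is $O(1)$ these three pieces are absorbed into the residual ${\vec R}({\vec x})$. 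Only $\mathrm{II}_4$ then contributes to the leading expansion. Note the contrast with Lemma~\ref{lemma:boundI}, where the extra factor $\sigma_*\sim\nu/\alpha^2$ forced the choice $P=Q=M$; the absence of $\sigma_*$ in $\mathrm{II}$ is exactly what permits the lower truncation order here.

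Next I would reduce $\mathrm{II}_4$ to an integral over the fixed domain $B$ by substituting ${\vec y}=\alpha{\vec \xi}+{\vec z}$, which contributes a Jacobian $\alpha^3$ and replaces $(\Pi({\vec y}-{\vec z}))_{K(m)}$ and $(\Pi({\vec y}-{\vec z}))_{J(p)}$ by $\alpha^m(\Pi({\vec \xi}))_{K(m)}$ and $\alpha^p(\Pi({\vec \xi}))_{J(p)}$. The scaled field ${\vec H}_0^*(({\vec y}-{\vec z})/\alpha)={\vec H}_0^*({\vec \xi})$ I would rewrite through its definition ${\vec H}_0^*=\frac{1}{\im\omega\mu_0}\nabla_\xi\times{\vec w}_0$ and the expansion (\ref{eqn:w0expand}) of ${\vec w}_0$, giving
\[
{\vec H}_0^*({\vec \xi})=\sum_{p=0}^{M-1}\frac{\alpha^p}{p!(p+2)}({\vec D}_z^p({\vec H}_0({\vec z})))_{J(p+1)}\nabla_\xi\times{\vec \theta}_{J(p+1)}({\vec \xi}).
\]
Because the Taylor part of ${\vec H}_0$ in $\mathrm{II}_4$ and this expansion of ${\vec H}_0^*$ both carry the coefficient $({\vec D}_z^p({\vec H}_0({\vec z})))_{J(p+1)}$ and run over the same index $p$, which is precisely why $T$ and $P$ must be taken equal, the two $p$-sums combine termwise, each summand acquiring the factor $(\Pi({\vec \xi}))_{J(p)}{\vec e}_j+\frac{1}{p+2}\nabla_\xi\times{\vec \theta}_{J(p+1)}$. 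Retaining the two uncontracted slots of ${\vec D}_x^m({\vec D}_x^2 G({\vec x},{\vec z}))$ as ${\vec e}_i\otimes{\vec e}_k$, this recasts $\mathrm{II}_4$ as a double sum over $0\le m\le M-1$ and $0\le p\le M-1$ with weight $-\frac{\mu_0}{\mu_*}\frac{(-1)^m\alpha^{3+m+p}}{m!\,p!}$ and integrand $(\Pi({\vec \xi}))_{K(m)}({\vec D}_z^p({\vec H}_0({\vec z})))_{J(p+1)}\big(\frac{1}{p+2}\nabla_\xi\times{\vec \theta}_{J(p+1)}+(\Pi({\vec \xi}))_{J(p)}{\vec e}_j\big)$.

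It then remains to restore the prefactor and truncate. Multiplying by the outer $1-\mu_*/\mu_0$ and using $\left(1-\frac{\mu_*}{\mu_0}\right)\left(-\frac{\mu_0}{\mu_*}\right)=1-\frac{\mu_0}{\mu_*}$ reproduces the coefficient in the statement. I would finally split the square index set $\{0\le m,p\le M-1\}$ into the triangular part $\{m+p\le M-1\}$, which is the displayed leading sum $\sum_{m=0}^{M-1}\sum_{p=0}^{M-1-m}$, and its complement $\{m+p\ge M\}$; every summand scales like $\alpha^{3+m+p}$, so the complementary terms are each $O(\alpha^{3+M})$ and join ${\vec R}({\vec x})$.

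The algebra is routine bookkeeping; the only points needing care are the termwise matching of the two $p$-sums, which closes cleanly only once $T=P$, and the claim that the off-diagonal ($m+p\ge M$) contributions are uniformly $O(\alpha^{3+M})$. The latter I expect to be the main thing to pin down: it rests on the integrals over the fixed domain $B$ being bounded independently of $\alpha$, for which the polynomial factors are harmless but the $\nabla_\xi\times{\vec \theta}_{J(p+1)}$ factors require the uniform (in $\alpha$) control of ${\vec \theta}_{J(p+1)}$ underlying the energy estimates of Theorem~\ref{thm:revised31}.
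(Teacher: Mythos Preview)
Your proposal is correct and follows essentially the same route as the paper: absorb $\mathrm{II}_1,\mathrm{II}_2,\mathrm{II}_3$ into the residual via Lemma~\ref{lemma:boundII1II2II3}, rescale $\mathrm{II}_4$ to $B$, combine the Taylor polynomial of ${\vec H}_0$ with the expansion of ${\vec H}_0^*$ termwise in $p$, and then split the resulting rectangular index set into the triangular leading part and an $O(\alpha^{3+M})$ remainder.

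The only difference from the paper is the choice of truncation orders: the paper takes $P=T=M$ and $S=M-1$ (stating ``in light of Lemma~\ref{lemma:boundI}, we need to choose $P=M$''), whereas you take $P=S=T=M-1$. Your choice is perfectly valid for Lemma~\ref{lemma:boundII} in isolation, since the bounds of Lemma~\ref{lemma:boundII1II2II3} with these values already yield $|\mathrm{II}_j|\le C\alpha^{3+M}\|{\vec H}_0\|_{W^{M,\infty}(B_\alpha)}\le C\alpha^{3+M}\|{\vec H}_0\|_{W^{M+1,\infty}(B_\alpha)}$ for $j=1,2,3$; your remark that the absence of the $\sigma_*\sim\nu/\alpha^2$ factor is what permits the lower order here is spot on. The paper's choice $P=M$ is only forced if one insists on using a single ${\vec w}_0$ shared with the proof of Lemma~\ref{lemma:boundI}; since ${\vec w}_0$ is merely an auxiliary object in each decomposition, nothing prevents a different $P$ in the two proofs. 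The leading triangular sum $\sum_{m=0}^{M-1}\sum_{p=0}^{M-1-m}$ is identical in both versions, and the complementary pieces are $O(\alpha^{3+M})$ either way. Your caveat about needing uniform-in-$\alpha$ control of $\nabla_\xi\times{\vec\theta}_{J(p+1)}$ for this last step is the right point to flag, and it is indeed supplied by the $\nu=O(1)$ hypothesis.
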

\begin{proof}
We recall that  $\mathrm{II}=\left ( 1- \frac{\mu_*}{\mu_0} \right )\left (\mathrm{II}_1+\mathrm{II}_2+\mathrm{II}_3+\mathrm{II}_4 \right )$ and, in light of Lemma~\ref{lemma:boundI}, we need to choose $P=M$. Making the choice of $S=M-1$ and $T=M$ we see from Lemma~\ref{lemma:boundII1II2II3} that the terms associated with $\mathrm{II}_1$, $\mathrm{II}_2$ and $\mathrm{II}_3$ all form part of ${\vec R}({\vec x})$ with conservative estimates in the power of $\alpha$ in $\mathrm{II}_1$ and $\mathrm{II}_3$, but all involving $\| {\vec H}_0 \|_{W^{{M+1}, \infty} (B_\alpha)}$.  Also
\begin{align*}
\mathrm{II}_4   =& -\frac{\mu_0}{\mu_*} \int_{B_\alpha} \sum_{m=0}^{M-1}  \frac{(-1)^m }{m!} ( {\vec D}_x^m ( {\vec D}_x^2 G ({\vec x},{\vec z})))_{K(m)} ( \Pi ( {\vec y}-{\vec z}))_{K(m)} \nonumber \\
&\left   ( \sum_{p=0}^{M} \frac{1 }{p!} ({\vec D}_z^p ({\vec H}_0({\vec z})))_{{J}(p+1)} ( \Pi ( {\vec y}-{\vec z} ) )_{J(p)} {\vec e}_j + {\vec H}_0^* \left ( \frac{ {\vec y}-{\vec z}}{\alpha} \right ) \right ) \dif {\vec y} \nonumber \\
= &  -\frac{\mu_0}{\mu_*} \alpha^3 \sum_{m=0}^{M-1} \sum_{p=0}^{M-1-m}  \frac{(-1)^m \alpha^{m+p} }{m! p! } ( {\vec D}_x^{2+m} G ({\vec x},{\vec z}) )_{[i,K(m+1)]}({\vec e}_i \otimes {\vec e}_k) \nonumber \\
& \int_B  (\Pi ( {\vec \xi}))_{K(m)} ({\vec D}_z^p ({\vec H}_0({\vec z})))_{{J}(p+1)}  \left ( 
\frac{1}{p+2} \nabla_\xi \times {\vec \theta}_{{J}(p+1)} +( \Pi  ( {\vec \xi})  )_{J(p)} {\vec e}_j   \right )  \dif {\vec \xi} +{\vec R}_{\mathrm{II}}( {\vec x} ),
\end{align*}
where
\begin{align*}
{\vec R}_{\mathrm{II}}( {\vec x} ) = & -\frac{\mu_0}{\mu_*} \alpha^3 \sum_{m=0}^{M-1} \sum_{p=M-m}^{M}  \frac{(-1)^m \alpha^{m+p} }{m! p! } ( {\vec D}_x^{2+m} G ({\vec x},{\vec z}))_{[i,K(m+1)]} ({\vec e}_i \otimes {\vec e}_k) \nonumber \\
& \int_B  (\Pi  ({\vec \xi}))_{K(m)} ({\vec D}_z^p ({\vec H}_0({\vec z})))_{{J}(p+1)}  \left ( 
\frac{1}{p+2} \nabla_\xi \times {\vec \theta}_{{J}(p+1)} + ( \Pi ( {\vec \xi})  )_{J(p)} {\vec e}_j   \right )  \dif {\vec \xi}\nonumber .
\end{align*}
It follows that $| {\vec R}_{\mathrm{II}}({\vec x}) |  \le  C  \alpha^3 \alpha^{M}\| {\vec H}_0 \|_{W^{{M+1}, \infty} (B_\alpha)}$ and so ${\vec R}_{\mathrm{II}}({\vec x})$ forms part of ${\vec R}({\vec x})$. Substitution of $\mathrm{II}_4$ in to the expression for $\mathrm{II}$ completes the proof.
\end{proof}

\section{Tensor representations} \label{sect:tensorrep}

\begin{lemma} \label{lemma:tensors}
The arrays of functions defined in (\ref{eqn:Arank4h}, \ref{eqn:Nrank2h}) are invariant under the orthogonal transformations
\begin{align}
  {\mathfrak A}_{[[i ,\ell, {K(m+1)],J(p+1)}]} [ {\mathcal J}(B) ] = &   {\mathcal J}_{i i'} {\mathcal J}_{\ell \ell '} {\mathcal J}_{k k'} {\mathcal J}_{j j'} {\mathfrak J}_{K(m) K'(m)} {\mathfrak J}_{J(p) J'(p)} {\mathfrak A}_{[[i',\ell ', {K'(m+1)],J'(p+1)}]}  [B]\nonumber , \\
  {\mathfrak N}_{{K(m+1)J(p+1) }} [{\mathcal J}(B)] = & {\mathcal J}_{k k'} {\mathcal J}_{j j'} {\mathfrak J}_{K(m) K'(m)} {\mathfrak J}_{J(p) J'(p)}   {\mathfrak N}_{{ K'(m+1)J'(p+1})}   [B] \nonumber ,
 \end{align}
 where the term inside the square parenthesis indicates the object for which the tensor is evaluated, ${\mathcal J}$ is an orthogonal transformation matrix and
 \begin{align*}
  {\mathfrak J}_{K(m) K'(m)}  :=\prod_{r=1}^m  {\mathcal J}_{k_r k_r'}, \qquad
  {\mathfrak J}_{J(p) J'(p)}  :=\prod_{r=1}^p  {\mathcal J}_{j_r j_r'}. \qquad
 \end{align*}
It follows that these arrays of functions are the coefficients of the  rank $ 4+ m+p$ and $2+m+p$ tensors,  ${\mathfrak A}$ and ${\mathfrak N}$, respectively.
\end{lemma}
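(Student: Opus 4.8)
The plan is to reduce everything to a single transformation rule for the auxiliary solutions ${\vec \theta}_{{J}(p+1)}$ of the transmission problem (\ref{eqn:transproblemtheta}), and then to substitute this rule into the defining integrals (\ref{eqn:Arank4h}) and (\ref{eqn:Nrank2h}) and change variables. Throughout, let ${\mathcal J}$ be orthogonal, so that ${\mathcal J}^T{\mathcal J}={\mathbb I}$ and $\det{\mathcal J}=\pm 1$, and write ${\vec \theta}_{{J}(p+1)}[D]$ for the solution of (\ref{eqn:transproblemtheta}) posed on the domain $D$. The two facts I would isolate first are the behaviour of the curl and of the cross product under ${\mathcal J}$: for a vector field with ${\vec u}'({\vec \xi})={\mathcal J}{\vec u}({\mathcal J}^T{\vec \xi})$ one has $\nabla \times{\vec u}'=\det({\mathcal J})\,{\mathcal J}(\nabla\times{\vec u})({\mathcal J}^T{\vec \xi})$ and $\nabla\cdot{\vec u}'=(\nabla\cdot{\vec u})({\mathcal J}^T{\vec \xi})$, while $({\mathcal J}{\vec u})\times({\mathcal J}{\vec x})=\det({\mathcal J}){\mathcal J}({\vec u}\times{\vec x})$; both are immediate from the identity ${\mathcal J}_{ii'}{\mathcal J}_{kk'}{\mathcal J}_{rr'}\varepsilon_{i'k'r'}=\det({\mathcal J})\varepsilon_{ikr}$ with $\varepsilon$ as in (\ref{eqn:altensor}).

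Next I would establish the claim that
\[
{\vec \theta}_{{J}(p+1)}[{\mathcal J}(B)]({\vec \xi})=\det({\mathcal J})\,{\mathcal J}_{jj'}\,{\mathfrak J}_{J(p)J'(p)}\,{\mathcal J}\,{\vec \theta}_{{J'}(p+1)}[B]({\mathcal J}^T{\vec \xi}),
\]
by verifying that the right-hand side solves the transmission problem (\ref{eqn:transproblemtheta}) for the object ${\mathcal J}(B)$ and then appealing to uniqueness. Substituting into the interior equation, the double curl passes through ${\mathcal J}$ without a sign (the two $\det({\mathcal J})$ factors square to one), the divergence condition is preserved, and the source $\im \omega \sigma_* \alpha^2(\Pi({\vec \xi}))_{J'(p)}{\vec e}_{j'}\times{\vec \xi}$ transforms---using the cross-product rule together with ${\mathcal J}_{jj'}({\mathcal J}{\vec e}_{j'})={\vec e}_j$ and ${\mathfrak J}_{J(p)J'(p)}(\Pi({\mathcal J}^T{\vec \xi}))_{J'(p)}=(\Pi({\vec \xi}))_{J(p)}$---into exactly the source of the ${\mathcal J}(B)$ problem, provided the prefactor $\det({\mathcal J})$ is present. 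The same contractions, with the unit normal transforming as ${\vec n}\mapsto{\mathcal J}{\vec n}$ and $[\mu^{-1}]_\Gamma$ unchanged, show the jump conditions and the normalisation $\int_\Gamma{\vec n}\cdot{\vec \theta}\,\dif{\vec \xi}=0$ are matched with the identical $\det({\mathcal J})$ factor, and the decay at infinity is inherited; uniqueness of (\ref{eqn:transproblemtheta}) then forces the displayed identity.

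With the rule for ${\vec \theta}_{{J}(p+1)}$ in hand, I would substitute into (\ref{eqn:Nrank2h}) and (\ref{eqn:Arank4h}) and change variables ${\vec \xi}\mapsto{\mathcal J}{\vec \xi}$, noting that the Jacobian is $|\det{\mathcal J}|=1$ and that ${\mathcal J}(B)\mapsto B$. For ${\mathfrak N}$, the monomials give $(\Pi)_{K(m)}={\mathfrak J}_{K(m)K'(m)}(\Pi)_{K'(m)}$ and $(\Pi)_{J(p)}={\mathfrak J}_{J(p)J'(p)}(\Pi)_{J'(p)}$, the term $\nabla_\xi\times{\vec \theta}_{{J}(p+1)}[{\mathcal J}(B)]$ produces ${\mathcal J}_{jj'}{\mathfrak J}_{J(p)J'(p)}{\mathcal J}(\nabla_\xi\times{\vec \theta}_{{J'}(p+1)}[B])$ (the two determinants again cancelling), and the contractions ${\vec e}_k\cdot{\mathcal J}{\vec u}={\mathcal J}_{kk'}({\vec e}_{k'}\cdot{\vec u})$ and $\delta_{kj}={\mathcal J}_{kk'}{\mathcal J}_{jj'}\delta_{k'j'}$ deliver precisely ${\mathcal J}_{kk'}{\mathcal J}_{jj'}{\mathfrak J}_{K(m)K'(m)}{\mathfrak J}_{J(p)J'(p)}{\mathfrak N}_{{K'}(m+1){J'}(p+1)}[B]$. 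For ${\mathfrak A}$ the computation is the same in spirit but carries the extra factor $({\vec \xi})_\ell$, giving a ${\mathcal J}_{\ell\ell'}$, and the triple product ${\vec e}_i\cdot({\vec e}_k\times\,\cdot\,)$, for which $\varepsilon_{ikr}=\det({\mathcal J}){\mathcal J}_{ii'}{\mathcal J}_{kk'}{\mathcal J}_{rr'}\varepsilon_{i'k'r'}$ contributes ${\mathcal J}_{ii'}{\mathcal J}_{kk'}$ together with one further $\det({\mathcal J})$; all $\det({\mathcal J})$ factors cancel in pairs, yielding the stated law for ${\mathfrak A}$. The closing assertion---that these arrays are genuine tensor coefficients of rank $4+m+p$ and $2+m+p$---is then the standard converse: an array transforming under every orthogonal ${\mathcal J}$ by the quoted law defines a basis-independent element of the corresponding tensor space.

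I expect the main obstacle to be the bookkeeping of the $\det({\mathcal J})$ factors for improper transformations (reflections), since the curl and the cross product each introduce a sign under ${\mathcal J}$. The whole argument hinges on checking that these signs appear an even number of times in ${\mathfrak A}$ and in ${\mathfrak N}$ (so they cancel) while exactly one factor survives in the intermediate rule for ${\vec \theta}_{{J}(p+1)}$, and on confirming that this single surviving factor is consistent simultaneously across the interior equation, the two interface jump conditions, and the normalisation; every subsequent step is routine change of variables and index contraction.
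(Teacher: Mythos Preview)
Your proposal is correct and follows essentially the same approach as the paper: establish the transformation rule for the auxiliary solution ${\vec \theta}_{J(p+1)}$ of the transmission problem under ${\mathcal J}$, then substitute into the defining integrals (\ref{eqn:Arank4h}), (\ref{eqn:Nrank2h}), change variables ${\vec \xi}\mapsto{\mathcal J}{\vec \xi}$, and track the $\det({\mathcal J})$ factors until they cancel in pairs. The only cosmetic difference is that the paper writes the transformation as ${\vec F}_{{\mathcal J}(B),{\vec e}_j,J(p)}=|{\mathcal J}|\,{\mathfrak J}_{J(p)J'(p)}\,{\mathcal J}\,{\vec F}_{B,{\mathcal J}^T{\vec e}_j,J'(p)}$ (keeping the source direction ${\mathcal J}^T{\vec e}_j$ explicit and citing earlier work for the verification) and only uses linearity in ${\vec e}_j$ to extract the factor ${\mathcal J}_{jj'}$ at the end, whereas you contract that index from the outset; your explicit uniqueness-based verification of the rule is a welcome addition to what the paper leaves to a citation.
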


\begin{proof}
Building on the previous results in Proposition 4.3 in~\cite{ammarivolkov2013b} and Theorem 3.1 in~\cite{ledgerlionheart2014}, we set ${\vec F}_{{\mathcal J}(B),{\vec e}_j,J(p)}$ to be the solution of
\begin{align*}
\nabla_\xi \times \mu_*^{-1} \nabla_\xi \times {\vec F}_{ {{\mathcal J}(B),{\vec e}_j,J(p)} } - \im \omega \sigma_* \alpha^2& {\vec F}_{{{\mathcal J}(B),{\vec e}_j,J(p)} }  =    \im \omega \sigma_* \alpha^{2} ( \Pi({\vec \xi}) )_{J(p)}  {\vec e}_j \times {\vec \xi}  && \text{in ${\mathcal J}(B)$ } , \\
\nabla_\xi \cdot {\vec F}_{ {{\mathcal J}(B),{\vec e}_j,J(p)} }  = & 0 && \text{in ${\mathbb R}^3 \setminus {\mathcal J}(B)$ } , \\
\nabla_\xi \times \mu_0^{-1} \nabla_\xi \times {\vec F}_{ {{\mathcal J}(B),{\vec e}_j,J(p)} }   = & {\vec 0} && \text{in ${\mathbb R}^3 \setminus {\mathcal J}(B)$ } , \\
[{\vec n} \times {\vec F}_{ {{\mathcal J}(B),{\vec e}_j,J(p)} } ]_{\partial {\mathcal J}(B)} =  & {\vec 0},  && \text{on $ \partial{\mathcal J}( B)$} , \\
 [{\vec n} \times \mu^{-1} \nabla_\xi  \times {\vec F}_{ {{\mathcal J}(B),{\vec e}_j,J(p)} } ]_{\partial {\mathcal J}(B)}  = &  - (p+2)[ \mu^{-1} ]_\Gamma  {\vec n} \times {\vec e}_j  (\Pi({\vec \xi})  )_{J(p)} 
 && \text{on $\partial {\mathcal J}(B)$}, \\
 \int_{\partial {\mathcal J}(B)} {\vec n} \cdot {\vec F}_{ {{\mathcal J}(B),{\vec e}_j,J(p)} } |_+  \dif {\vec \xi}   =& 0 , \\
{\vec F}_{ {{\mathcal J}(B),{\vec e}_j,J(p)} }  = & O( | {\vec \xi} |^{-1}) && \text{as $|{\vec \xi} | \to \infty$ },
\end{align*} 
and, by following similar arguments to~\cite{ammarivolkov2013b}, we find that
\begin{equation*}
{\vec F}_{ {{\mathcal J}(B),{\vec e}_j,J(p)} } = |{\mathcal J} |  {\mathfrak J}_{J(p) J'(p)} {\mathcal J} {\vec F}_{ {  B,{\mathcal J}^T {\vec e}_j,J'(p)} } .
\end{equation*}
Then, by writing $A(p,m)= - \im \nu \frac{(-1)^m \alpha^{3+p+m}}{p!(m+1)! (p+2)} $, we have 
\begin{align}
{} &  {{\mathfrak A}_{[[i,\ell, {K}(m+1)],{J}(p+1)]}  [ {\mathcal J} ( B ) ] } =  A(p,m) {\vec e}_i \cdot \nonumber\\
&\int_{ {\mathcal J} (B)}   {\vec e}_k \times 
( {\xi}_{\ell }  (\Pi({\vec \xi}))_{K(m)}   (  {\vec F}_{ {\mathcal J}(B),{\vec e}_j,J(p) }   + (\Pi({\vec \xi}))_{J(p)} {\vec e}_j \times {\vec \xi} ))   \dif {\vec \xi}  \nonumber \\
 = &  A(p,m) {\vec e}_i \cdot \int_B {\vec e}_k \times (  {\mathcal J}_{\ell \ell '} {\xi}_{\ell ' }   {\mathfrak J}_{ K '(m) K(m)} (\Pi({\vec \xi}))_{K'(m)}  ( |{\mathcal J} |  {\mathfrak J}_{J(p) J'(p)} {\mathcal J} {\vec F}_{ {  B,{\mathcal J}^T {\vec e}_j,J'(p)} } + \nonumber \\
 &\qquad \qquad \qquad {\mathfrak J}_{J(p) J'(p)} (\Pi({\vec \xi}))_{J'(p)} {\vec e}_j \times ({\mathcal J} {\vec \xi}) )) \dif {\vec \xi}\nonumber \\
 = & |J| {\mathcal J}_{\ell \ell '}  {\mathfrak J}_{J(p) J'(p)}  {\mathfrak J}_{ K '(m) K(m)} A(p,m)   {\vec e}_i \cdot \int_B {\vec e}_k \times (  {\mathcal J}( {\xi}_{\ell ' }(\Pi({\vec \xi}))_{K'(m)} ({\vec F}_{ {  B,{\mathcal J}^T {\vec e}_j,J'(p)} } + \nonumber \\
 &\qquad \qquad \qquad  (\Pi({\vec \xi}))_{J'(p)} ({\mathcal J}^T {\vec e}_j) \times {\vec \xi}) ) )\dif {\vec \xi}\nonumber \\
 = & |J|^2 {\mathcal J}_{\ell \ell '}  {\mathfrak J}_{J(p) J'(p)}  {\mathfrak J}_{ K '(m) K(m)} A(p,m)   {\vec e}_i \cdot \int_B {\mathcal J}(   ({\mathcal J}^T {\vec e}_k) \times (  {\xi}_{\ell ' }(\Pi({\vec \xi}))_{K'(m)} ( {\vec F}_{ {  B,{\mathcal J}^T {\vec e}_j,J'(p)} } + \nonumber \\
 &\qquad \qquad \qquad  (\Pi({\vec \xi}))_{J'(p)} ({\mathcal J}^T {\vec e}_j) \times {\vec \xi}) ) )\dif {\vec \xi}\nonumber \\
= &  {\mathcal J}_{\ell \ell '} {\mathcal J}_{k k '} {\mathcal J}_{i i '}  {\mathfrak J}_{J(p) J'(p)}  {\mathfrak J}_{ K '(m) K(m)} A(p,m)   {\vec e}_{i'} \cdot \int_B    {\vec e}_{k'} \times (  {\xi}_{\ell ' }(\Pi({\vec \xi}))_{K'(m)}  ({\vec F}_{ {  B,{\mathcal J}^T {\vec e}_j,J'(p)} } + \nonumber \\
 &\qquad \qquad \qquad  (\Pi({\vec \xi}))_{J'(p)} ({\mathcal J}^T {\vec e}_j) \times {\vec \xi}))  \dif {\vec \xi}\nonumber \\
= &  {\mathcal J}_{\ell \ell '} {\mathcal J}_{k k '} {\mathcal J}_{i i '} {\mathcal J}_{j j '}  {\mathfrak J}_{J(p) J'(p)}  {\mathfrak J}_{ K '(m) K(m)} A(p,m)   {\vec e}_{i'} \cdot \int_B    {\vec e}_{k'} \times (  {\xi}_{\ell ' }(\Pi({\vec \xi}))_{K'(m)} ( {\vec F}_{ {  B, {\vec e}_{j'},J'(p)} } + \nonumber \\
 &\qquad \qquad \qquad  (\Pi({\vec \xi}))_{J'(p)} {\vec e}_{j'} \times {\vec \xi}))  \dif {\vec \xi}\nonumber \\
  = &  {\mathcal J}_{\ell \ell '} {\mathcal J}_{k k '} {\mathcal J}_{i i '} {\mathcal J}_{j j '}  {\mathfrak J}_{J(p) J'(p)}  {\mathfrak J}_{ K '(m) K(m)} { {\mathfrak A}_{[[i',\ell ', {K} '(m+1)],{J}'(p+1)]} [ B  ]}\nonumber ,
\end{align}
as desired. Similarly, by using
\begin{equation*}
\nabla_\xi \times  {\vec F}_{ {{\mathcal J}(B),{\vec e}_j,J(p)} }={\mathfrak J}_{J(p)J'(p)} {\mathcal J} \nabla_\xi \times ( {\vec F}_{  (B), ( {\mathcal J}^T{\vec e}_j) ,J'(p)} ) ,
 \end{equation*}
 we find that
 \begin{align}
{}&  {\mathfrak N}_{{K}(m+1) {J}(p+1) } [{\mathcal J}(B)] =   N(p,m) {\vec e}_k \cdot \nonumber \\
 &\int_{{\mathcal J}(B)}  ( \Pi({\vec \xi}))_{K(m)}   \left ( \frac{1}{p+2}  \nabla_\xi \times  {\vec F}_{ {{\mathcal J}(B),{\vec e}_j,J(p)} } +( \Pi({\vec \xi}))_{J(p)} {\vec e}_j  \right ) \dif {\vec \xi} \nonumber \\
 = &  N(p,m) {\vec e}_k \cdot
 \int_{B}  {\mathfrak J}_{ K(m) K'(m)} ( \Pi({\vec \xi}))_{K'(m)}   \left ( \frac{1}{p+2} {\mathfrak J}_{J(p)J'(p)} {\mathcal J} \nabla_\xi \times  {\vec F}_{  B,  {\mathcal J}^T{\vec e}_j ,J'(p)} 
  +\right . \nonumber \\
  &\qquad \qquad \qquad \left .  {\mathfrak J}_{J(p) J'(p)} ( \Pi({\vec \xi}))_{J'(p)} {\vec e}_j  \right ) \dif {\vec \xi}\nonumber \\
 = & {\mathcal J}_{k k'}  {\mathfrak J}_{ K(m) K'(m)}  {\mathfrak J}_{J(p) J'(p)} N(p,m) {\vec e}_{k'} \cdot \int_{B} ( \Pi({\vec \xi}))_{K'(m)}   \left ( \frac{1}{p+2} \nabla_\xi \times  {\vec F}_{  B,  {\mathcal J}^T{\vec e}_j ,J'(p)}    +\right . \nonumber \\
  &\qquad \qquad \qquad \left .  ( \Pi({\vec \xi}))_{J'(p)} {\mathcal J}_{j j'} {\vec e}_{j'}  \right ) \dif {\vec \xi}\nonumber \\
  = & {\mathcal J}_{k k'}  {\mathcal J}_{j j'}  {\mathfrak J}_{ K(m) K'(m)}  {\mathfrak J}_{J(p) J'(p)}   N(p,m) {\vec e}_{k'} \cdot \int_{B} ( \Pi({\vec \xi}))_{K'(m)}   \left ( \frac{1}{p+2} \nabla_\xi \times  {\vec F}_{  B, {\vec e}_{j'} ,J'(p)}   +\right . \nonumber \\
  &\qquad \qquad \qquad \left .  ( \Pi({\vec \xi}))_{J'(p)} {\vec e}_{j'}  \right ) \dif {\vec \xi}\nonumber \\
  = & {\mathcal J}_{k k'}  {\mathcal J}_{j j'}  {\mathfrak J}_{ K(m) K'(m)}  {\mathfrak J}_{J(p) J'(p)}  {\mathfrak N}_{{K}'(m+1) {J}'(p+1)} [ B] \nonumber ,
\end{align}
 where $N(p,m):=\frac{ (-1)^m\alpha^{3+m+p} }{p! m!} \left ( 1 - \frac{\mu_0}{\mu_*} \right )$.
\end{proof}

\begin{corollary}
Note that an alternative transmission problem
\begin{align*}
\nabla_\xi \times \mu_*^{-1} \nabla_\xi \times \tilde{\vec F}_{ {{\mathcal J}(B),J(p)} } - \im \omega \sigma_* \alpha^2 \tilde{\vec F}_{{{\mathcal J}(B),J(p)} }  = &   \im \omega \sigma_* \alpha^{2} ( \Pi({\vec \xi}) )_{J(p)}  {\vec \xi}  && \text{in ${\mathcal J}(B)$ } , \\
\nabla_\xi \cdot \tilde{\vec F}_{ {{\mathcal J}(B), J(p)} }  = & 0 && \text{in ${\mathbb R}^3 \setminus {\mathcal J}(B)$ } , \\
\nabla_\xi \times \mu_0^{-1} \nabla_\xi \times \tilde{\vec F}_{ {{\mathcal J}(B), J(p)} }   = & {\vec 0} && \text{in ${\mathbb R}^3 \setminus {\mathcal J}(B)$ } , \\
[{\vec n} \times \tilde{\vec F}_{ {{\mathcal J}(B),J(p)} } ]_{\partial {\mathcal J}(B)} =  & {\vec 0},  && \text{on $ \partial{\mathcal J}( B)$} , \\
 [{\vec n} \times \mu^{-1} \nabla_\xi  \times \tilde{\vec F}_{ {{\mathcal J}(B),J(p)} } ]_{\partial {\mathcal J}(B)}  = &  (p+2)[ \mu^{-1} ]_\Gamma  {\vec n}  (\Pi({\vec \xi})  )_{J(p)} 
 && \text{on $\partial {\mathcal J}(B)$}, \\
 \int_{\partial {\mathcal J}(B)} {\vec n} \cdot \tilde{\vec F}_{ {{\mathcal J}(B),J(p)} } |_+  \dif {\vec \xi}   =& 0 , \\
\tilde{\vec F}_{ {{\mathcal J}(B),J(p)} }  = & O( | {\vec \xi} |^{-1}) && \text{as $|{\vec \xi} | \to \infty$ },
\end{align*} 
satisfying $ {\vec F}_{ {{\mathcal J}(B),{\vec e}_j , J(p)} } =  {\vec e}_j \times \tilde{\vec F}_{ {{\mathcal J}(B),J(p)} }$ can be introduced. The advantage of the formulation for  $\tilde{\vec F}_{ {{\mathcal J}(B),J(p)} }$ is that it and obeys the simpler transformation
\begin{equation*}
\tilde{\vec F}_{ {{\mathcal J}(B),J(p)} } =   {\mathfrak J}_{J(p) J'(p)} {\mathcal J} \tilde{\vec F}_{ {  B,  J'(p)} } ,
\end{equation*}
which is consistent with a rank $2+p$ tensor. Nonetheless, the transformations of the components of ${\mathfrak A}$ and ${\mathfrak N}$, if written in terms of $\tilde{\vec F}_{ {{\mathcal J}(B),J(p)} }$, remain unchanged. However, we prefer to continue use  $ {\vec F}_{ {{\mathcal J}(B),{\vec e}_j , J(p)} }$, and hence  ${\vec \theta}_{{J}(p+1)}$, since it results in a simpler form of ${\mathfrak N}_{{K}(m+1) {J}(p+1) }$.
\end{corollary}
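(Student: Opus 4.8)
The plan is to read the corollary as three linked assertions and prove them in order: (i) the linking identity $ {\vec F}_{ {{\mathcal J}(B),{\vec e}_j,J(p)} } = {\vec e}_j \times \tilde{\vec F}_{ {{\mathcal J}(B),J(p)} }$; (ii) the clean transformation law $\tilde{\vec F}_{ {{\mathcal J}(B),J(p)} } = {\mathfrak J}_{J(p) J'(p)} {\mathcal J} \tilde{\vec F}_{ {B,J'(p)} }$; and (iii) that, on substituting $\tilde{\vec F}$ for ${\vec F}$, the transformation rules for ${\mathfrak A}$ and ${\mathfrak N}$ proved in Lemma~\ref{lemma:tensors} are unchanged. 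Item (i) is the engine: once it holds, (ii) is a repetition of the rescaling/rotation argument already carried out for ${\vec F}$, and (iii) is a substitution.

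For (i) I would insert the ansatz ${\vec e}_j \times \tilde{\vec F}_{ {{\mathcal J}(B),J(p)} }$ into each line of the transmission problem defining ${\vec F}_{ {{\mathcal J}(B),{\vec e}_j,J(p)} }$ and invoke uniqueness. The data behave well under left crossing with the constant ${\vec e}_j$: one checks that ${\vec e}_j \times ( \im \omega \sigma_* \alpha^2 (\Pi({\vec \xi}))_{J(p)} {\vec \xi} )$ is exactly the interior source for ${\vec F}$, and that ${\vec e}_j \times ( (p+2)[\mu^{-1}]_\Gamma {\vec n} (\Pi({\vec \xi}))_{J(p)} ) = -(p+2)[\mu^{-1}]_\Gamma\, {\vec n} \times {\vec e}_j (\Pi({\vec \xi}))_{J(p)}$ reproduces the jump datum for ${\vec F}$. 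The non-trivial input is the curl--curl term, for which I would use, for constant ${\vec e}_j$,
\[
\nabla_\xi \times \nabla_\xi \times ( {\vec e}_j \times \tilde{\vec F} ) = {\vec e}_j \times ( \nabla_\xi \times \nabla_\xi \times \tilde{\vec F} ) - {\vec e}_j \times \nabla_\xi ( \nabla_\xi \cdot \tilde{\vec F} ) - \nabla_\xi ( \nabla_\xi \times \tilde{\vec F} )_j ,
\]
together with the divergence relation $\nabla_\xi \cdot \tilde{\vec F} = -(p+3)(\Pi({\vec \xi}))_{J(p)}$ in the interior, obtained by taking the divergence of the $\tilde{\vec F}$ equation and using $\nabla_\xi \cdot ( (\Pi({\vec \xi}))_{J(p)} {\vec \xi} ) = (p+3)(\Pi({\vec \xi}))_{J(p)}$.

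The hard part will be reconciling the two correction terms above with the conditions that single out ${\vec F}$. The piece $\nabla_\xi ( \nabla_\xi \times \tilde{\vec F} )_j$ is a pure gradient and is annihilated by $\nabla_\xi \times$, while the term ${\vec e}_j \times \nabla_\xi ( \nabla_\xi \cdot \tilde{\vec F} )$ is governed by the interior divergence just computed; I would have to show that, after matching the exterior gauge $\nabla_\xi \cdot {\vec F} = 0$ (equivalently $ {\vec e}_j \cdot \nabla_\xi \times \tilde{\vec F} = 0$ there) and the flux normalisation, these corrections do not alter the quantities actually used. This is the delicate point, because $\nabla_\xi \times$ does not commute with $ {\vec e}_j \times$ pointwise; the escape is that the coefficients (\ref{eqn:Arank4h}) and (\ref{eqn:Nrank2h}) are built only from the gauge-stable combinations $ {\vec \xi} \times (\cdots)$ and $\nabla_\xi \times \tilde{\vec F}$, so the discrepancy terms must be shown to drop out of those functionals. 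Pinning this down rigorously is where the real work lies, and I expect it to be the main obstacle.

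Granting (i), item (ii) follows by repeating the change of variables $ {\vec \xi} \mapsto {\mathcal J} {\vec \xi}$ from the proof of Lemma~\ref{lemma:tensors} applied to the $\tilde{\vec F}$ problem: the interior source is now the genuine polar vector $(\Pi({\vec \xi}))_{J(p)} {\vec \xi}$ rather than the pseudovector $(\Pi({\vec \xi}))_{J(p)} {\vec e}_j \times {\vec \xi}$, so the transformation produces a single ${\mathcal J}$ on the field and the index transport ${\mathfrak J}_{J(p) J'(p)}$ on the monomial, with no determinant $|{\mathcal J}|$ surviving. For (iii) I would substitute ${\vec F}_{ {{\mathcal J}(B),{\vec e}_j,J(p)} } = {\vec e}_j \times \tilde{\vec F}_{ {{\mathcal J}(B),J(p)} }$ together with (ii) into (\ref{eqn:Arank4h}) and (\ref{eqn:Nrank2h}) and use the identity $ {\vec e}_j \times ( {\mathcal J} {\vec a} ) = |{\mathcal J}| {\mathcal J} ( ( {\mathcal J}^T {\vec e}_j ) \times {\vec a} )$; this regenerates precisely the $|{\mathcal J}| {\mathfrak J}_{J(p) J'(p)} {\mathcal J}\, {\vec F}_{ {B,{\mathcal J}^T {\vec e}_j,J'(p)} }$ law for ${\vec F}$ used in Lemma~\ref{lemma:tensors}, so the cross product is exactly the origin of the determinant factor, confirming that the transformation statements for ${\mathfrak A}$ and ${\mathfrak N}$ are unchanged.
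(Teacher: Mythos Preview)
The paper supplies no proof of this corollary; it is stated as a remark following Lemma~\ref{lemma:tensors} and left unargued, so there is nothing to compare your attempt against directly. Your decomposition into (i)--(iii) is sensible, and your handling of (ii) and (iii) is correct: the change of variables ${\vec \xi}\mapsto{\mathcal J}{\vec \xi}$ applied to the $\tilde{\vec F}$ problem yields the clean rule without $|{\mathcal J}|$ because the source $(\Pi({\vec\xi}))_{J(p)}{\vec\xi}$ is a polar vector, and the identity ${\vec e}_j\times({\mathcal J}{\vec a})=|{\mathcal J}|\,{\mathcal J}(({\mathcal J}^T{\vec e}_j)\times{\vec a})$ then regenerates exactly the ${\vec F}$ transformation law used in Lemma~\ref{lemma:tensors}, so the conclusions for ${\mathfrak A}$ and ${\mathfrak N}$ are unchanged.

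The obstacle you flag in (i) is genuine and not merely technical. Your identity
\[
\nabla_\xi\times\nabla_\xi\times({\vec e}_j\times\tilde{\vec F})-{\vec e}_j\times(\nabla_\xi\times\nabla_\xi\times\tilde{\vec F})
= -{\vec e}_j\times\nabla_\xi(\nabla_\xi\cdot\tilde{\vec F})-\nabla_\xi\bigl((\nabla_\xi\times\tilde{\vec F})_j\bigr)
\]
is correct, and neither correction term vanishes inside $B$ in general (you yourself compute $\nabla_\xi\cdot\tilde{\vec F}\neq 0$ there). Hence ${\vec e}_j\times\tilde{\vec F}$ does \emph{not} satisfy the interior PDE for ${\vec F}$, and the pointwise identity ${\vec F}={\vec e}_j\times\tilde{\vec F}$ cannot be obtained by the substitute-and-invoke-uniqueness route you propose. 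There is a second inconsistency you have not noticed: the stated jump condition $[{\vec n}\times\mu^{-1}\nabla_\xi\times\tilde{\vec F}]=(p+2)[\mu^{-1}]_\Gamma\,{\vec n}\,(\Pi({\vec\xi}))_{J(p)}$ equates a tangential vector to a normal one, so the alternative transmission problem as written is internally inconsistent and appears to contain a misprint. Your fallback suggestion---that the discrepancy terms might drop out of the specific functionals (\ref{eqn:Arank4h}) and (\ref{eqn:Nrank2h})---would at best salvage (iii) without establishing (i), and you do not carry it through. In short, your caution (``this is the main obstacle'') is warranted: the corollary as literally stated resists verification, and the paper does not indicate how the linking identity is meant to be understood.
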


Having verified that  ${\mathfrak A}$ and ${\mathfrak N}$ are tensors we now investigate whether the former rank  $4+m+p$  tensor can instead be represented by a  rank $2+m+p$  tensor.

\subsection{Reduction of ${\mathfrak A}$}
Without loss of generality, we assume a positively orientated orthogonal frame. Any change in sign that follows from the reduction in rank for a different frame will cancel as we shall reduce the rank of ${\mathfrak A}$ by 2.

\begin{lemma} \label{lemma:reduction1}
The coefficients of  ${\mathfrak A}$ satisfy {$   {\mathfrak A}_{[[i,\ell, k,K(m)],J(p+1)]}= -  {\mathfrak A}_{[[k, \ell ,i ,K(m)],J(p+1)]}$}  and so it is possible to reduce the rank of   ${\mathfrak A}$ by one and to represent it by the rank $3 +p+m$  tensor density with coefficients 
\begin{align}
{{\mathfrak C}_{[[r,\ell,K(m)] ,J(p+1)]} }: = & { \widecheck{\mathfrak A}_{[[r,\ell,K(m)] ,J(p+1)]} =\dfrac{1}{2} \varepsilon_{ r i k}  {\mathfrak A}_{[[i,\ell, k,K(m)],J(p+1)]}   }\nonumber \\
= & 
- \im \nu \frac{(-1)^m \alpha^{3+p+m}}{p!(m+1)! (p+2)}  {\vec e}_r \cdot \int_B    
 {\xi}_{\ell }  (\Pi({\vec \xi}))_{K(m)}   (   {\vec \theta}_{{J}(p+1)}  + (\Pi({\vec \xi}))_{J(p)} {\vec e}_j \times {\vec \xi} )   \dif {\vec \xi} , \nonumber
\end{align}
where we note that $ {{\mathfrak A}_{[[i,\ell, k,K(m)],J(p+1)]]} = \varepsilon_{i k r} \widecheck{\mathfrak A}_{[[r, \ell,K(m)],J(p+1)]}}$.
\end{lemma}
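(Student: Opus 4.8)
The plan is to exploit the fact that, in the defining expression (\ref{eqn:Arank4h}), the indices $i$ and $k$ enter \emph{only} through the scalar triple product $\vec{e}_i\cdot(\vec{e}_k\times(\cdot))$, while all remaining data are packaged into a single vector field that depends on neither $i$ nor $k$. Writing $A(p,m)=-\im\nu(-1)^m\alpha^{3+p+m}/(p!(m+1)!(p+2))$ as in the proof of Lemma~\ref{lemma:tensors} and abbreviating
\[
\vec{V} := \xi_\ell(\Pi(\vec{\xi}))_{K(m)}\left(\vec{\theta}_{J(p+1)} + (\Pi(\vec{\xi}))_{J(p)}\vec{e}_j\times\vec{\xi}\right),
\]
which carries the indices $\ell$, $K(m)$ and $J(p+1)$ but neither $i$ nor $k$, the definition reads $\mathfrak{A}_{[[i,\ell,k,K(m)],J(p+1)]}=A(p,m)\,\vec{e}_i\cdot\int_B\vec{e}_k\times\vec{V}\,\dif\vec{\xi}$. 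Invoking $\vec{e}_i\cdot(\vec{e}_k\times\vec{V})=\varepsilon_{ikr}(\vec{V})_r$, with $\varepsilon$ as in (\ref{eqn:altensor}), I would record the compact form
\[
\mathfrak{A}_{[[i,\ell,k,K(m)],J(p+1)]} = A(p,m)\,\varepsilon_{ikr}\int_B(\vec{V})_r\,\dif\vec{\xi}.
\]

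From this compact form the skew symmetry $\mathfrak{A}_{[[i,\ell,k,K(m)],J(p+1)]}=-\mathfrak{A}_{[[k,\ell,i,K(m)],J(p+1)]}$ is immediate, since interchanging $i$ and $k$ replaces $\varepsilon_{ikr}$ by $\varepsilon_{kir}=-\varepsilon_{ikr}$ while leaving $\vec{V}$, and hence the integral, untouched. Having established antisymmetry in the pair $(i,k)$, the rank reduction is just the standard three-dimensional Hodge dual of an antisymmetric pair. I would define $\widecheck{\mathfrak{A}}$ by the contraction $\mathfrak{C}_{[[r,\ell,K(m)],J(p+1)]}=\tfrac{1}{2}\varepsilon_{rik}\mathfrak{A}_{[[i,\ell,k,K(m)],J(p+1)]}$ and substitute the compact form; using $\varepsilon_{rik}\varepsilon_{iks}=\varepsilon_{rik}\varepsilon_{sik}=2\delta_{rs}$ collapses the double contraction to give $\mathfrak{C}_{[[r,\ell,K(m)],J(p+1)]}=A(p,m)\,\vec{e}_r\cdot\int_B\vec{V}\,\dif\vec{\xi}$, which upon reinstating $A(p,m)$ and $\vec{V}$ is exactly the claimed explicit expression.

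Finally I would verify the inverse relation $\mathfrak{A}_{[[i,\ell,k,K(m)],J(p+1)]}=\varepsilon_{ikr}\widecheck{\mathfrak{A}}_{[[r,\ell,K(m)],J(p+1)]}$. Substituting the definition of $\widecheck{\mathfrak{A}}$ and applying $\varepsilon_{ikr}\varepsilon_{rab}=\delta_{ia}\delta_{kb}-\delta_{ib}\delta_{ka}$ reduces the right-hand side to $\tfrac{1}{2}(\mathfrak{A}_{[[i,\ell,k,\ldots]]}-\mathfrak{A}_{[[k,\ell,i,\ldots]]})$, which equals $\mathfrak{A}_{[[i,\ell,k,\ldots]]}$ by the skew symmetry just proved; alternatively one reaches the same conclusion directly from the compact form.

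I do not anticipate a genuine obstacle: the content is the elementary fact that an array antisymmetric in one index pair is equivalent to its $\varepsilon$-contraction, and every step is an $\varepsilon$–$\delta$ manipulation. The only care required is bookkeeping—the multi-indices $\ell$, $K(m)$ and $J(p+1)$ are pure spectators and should simply be carried along unchanged, and one must keep straight the positions of $i$ (the first slot) and $k$ (the leading slot of $K(m+1)$) when performing the interchange. It is also worth noting, as the paper does, that $\mathfrak{C}$ is a tensor \emph{density} rather than a tensor, so the orientation-dependent sign of $\varepsilon$ must be handled consistently; this is harmless here, since the rank is ultimately reduced a second time in Lemma~\ref{lemma:reduction2} and the orientation signs then cancel.
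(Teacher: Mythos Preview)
Your proof is correct and follows essentially the same approach as the paper: both isolate the triple product $\vec{e}_i\cdot(\vec{e}_k\times\vec{V})$ with $\vec{V}$ independent of $i$ and $k$, read off the skew symmetry in $(i,k)$, and then perform the $\varepsilon$-contraction to reduce rank. The only cosmetic difference is that you write the argument in $\varepsilon$--$\delta$ notation and spell out the contraction identities explicitly, whereas the paper argues via the vector triple-product identity and defers the reduction step to ``similar operations to Lemma~4.1 in~\cite{ledgerlionheart2014}''.
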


\begin{proof}
We first write $A(p,m)= - \im \nu \frac{(-1)^m \alpha^{3+p+m}}{p!(m+1)! (p+2)} $ so that
\begin{align}
 { {\mathfrak A}_{[[i,\ell, k,{K}(m)],{J}(p+1)]} }= & A(p,m) {\vec e}_i \cdot \int_B   {\vec e}_k \times 
( {\xi}_{\ell }  (\Pi({\vec \xi}))_{K(m)}   (   {\vec \theta}_{{J}(p+1)}  + (\Pi({\vec \xi}))_{J(p)} {\vec e}_j \times {\vec \xi} ))   \dif {\vec \xi}  \nonumber \\
 = & A(p,m) {\vec e}_i \cdot {\vec e}_k \times \left (  \int_B   
 {\xi}_{\ell }  (\Pi({\vec \xi}))_{K(m)}   (   {\vec \theta}_{{J}(p+1)}  + (\Pi({\vec \xi}))_{J(p)} {\vec e}_j \times {\vec \xi} )   \dif {\vec \xi} \right ) \nonumber \\
= & -A(p,m) {\vec e}_k \cdot {\vec e}_i \times \left (  \int_B   
  {\xi}_{\ell }  (\Pi({\vec \xi}))_{K(m)}   (   {\vec \theta}_{{J}(p+1)}  +( \Pi({\vec \xi}))_{J(p)} {\vec e}_j \times {\vec \xi} )   \dif {\vec \xi} \right ) \nonumber \\
 = &{-{\mathfrak A}_{[[k, \ell ,i , K(m)],J(p+1)]}} \nonumber ,
 \end{align}
and the result then immediately follows by similar operations to  Lemma 4.1 in~\cite{ledgerlionheart2014}. 
\end{proof}

\begin{lemma} \label{lemma:reduction2}
The coefficients of the tensor density ${\mathfrak C}$ satisfy {${\mathfrak C}_{[[r,\ell,K(m)] ,J(p+1)]} = -{\mathfrak C}_{[[\ell,r,K(m)] ,J(p+1)]}$}, under summation with $( {\vec D}^{2+m} G( {\vec x},{\vec z} )  )_{[\ell ,K(m+1)]} $ and   $ ({\vec D}^p({\vec H}({\vec z})))_{ {J}(p+1)}$,  and so we can reduce the rank of  ${\mathfrak C}$ by one and represent it by the rank $2+m+p$ tensor with coefficients
\begin{align}
  \widecheck{\mathfrak C}_{ {K}(m+1) {J}(p+1)}= &{  \widecheck{\mathfrak C}_{[[k,K(m)] ,J(p+1)]} = \dfrac{1}{2} \varepsilon_{ k    \ell r }  {\mathfrak C}_{[ [r,\ell ,K(m)],J(p+1)]} }\nonumber\\
= & - \im \nu \frac{(-1)^m \alpha^{3+p+m}}{2p!(m+1)! (p+2)}  {\vec e}_k \cdot \int_B    
  {\vec \xi} \times (  (\Pi({\vec \xi}))_{K(m)}   (   {\vec \theta}_{{J}(p+1)}  + (\Pi({\vec \xi}))_{J(p)} {\vec e}_j \times {\vec \xi} )   ) \dif {\vec \xi} , \nonumber
\end{align}
where we note that $ {{\mathfrak C}_{[[r,\ell, K(m)],J(p+1)]} = \varepsilon_{\ell r k} \widecheck{\mathfrak C}_{[[k,K(m) ],J(p+1)]}}= \varepsilon_{\ell r k} \widecheck{\mathfrak C}_{K(m+1) J(p+1)} $.

\end{lemma}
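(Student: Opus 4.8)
The plan is to establish that, with the multi-indices $K(m)$ and $J(p+1)$ held fixed, the two-index array $\mathfrak{C}_{[[r,\ell,K(m)],J(p+1)]}$ is skew in $(r,\ell)$ \emph{after} it is contracted against the two tensors that invariably accompany it in~(\ref{eqn:fulltensorexp}): the higher Hessian $({\vec D}_x^{2+m}G({\vec x},{\vec z}))_{[\ell,K(m+1)]}$ and the field derivative $({\vec D}_z^p({\vec H}_0({\vec z})))_{J(p+1)}$. The engine of the argument is that both of these are totally symmetric and trace--free: $G$ is harmonic for ${\vec x}\ne{\vec z}$, so every trace of $({\vec D}_x^{2+m}G)$ vanishes, while inside $B_\alpha$ the source ${\vec J}_0$ is absent and hence $\nabla\times{\vec H}_0={\vec 0}$ and $\nabla\cdot{\vec H}_0=0$, which makes $({\vec D}_z^p{\vec H}_0)_{[j,J(p)]}$ symmetric in every pair of indices (in particular in $(j,j_b)$) and trace--free.

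First I would isolate the symmetric part $\mathrm{Sym}_{r\ell}:=\int_B(\Pi({\vec\xi}))_{K(m)}(\xi_\ell v_r+\xi_r v_\ell)\,\dif{\vec\xi}$, where ${\vec v}:={\vec\theta}_{J(p+1)}+(\Pi({\vec\xi}))_{J(p)}{\vec e}_j\times{\vec\xi}$ is the bracket occurring in $\mathfrak{C}$, and show that it is inert under the above contraction; the skew part is then precisely what survives. Applying the divergence identity $\partial_s\big((\Pi)_{K(m)}\xi_\ell\xi_r v_s\big)=(\Pi)_{K(m)}(\xi_\ell v_r+\xi_r v_\ell)+\xi_\ell\xi_r\,\partial_s\big((\Pi)_{K(m)} v_s\big)$ turns $\mathrm{Sym}_{r\ell}$ into a surface flux $\int_{\partial B}(\Pi)_{K(m)}\xi_\ell\xi_r\,({\vec v}\cdot{\vec n})\,\dif{\vec\xi}$ plus volume terms carrying $\xi_\ell\xi_r\,\nabla_\xi\cdot{\vec v}$ and $\xi_\ell\xi_r\,v_s\partial_s(\Pi)_{K(m)}$.

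I would then dispatch the three contributions separately. Since $\nabla_\xi\cdot{\vec\theta}_{J(p+1)}=0$ in $B$, a short computation gives $\nabla_\xi\cdot{\vec v}=\sum_b\varepsilon_{j_b j e}\xi_e(\Pi)_{J(p)\setminus j_b}$, whose factor $\varepsilon_{j_b j e}$ is antisymmetric in $(j,j_b)$ and is therefore annihilated when summed against the $(j,j_b)$--symmetric $({\vec D}_z^p{\vec H}_0)_{J(p+1)}$. The weight--derivative term reproduces a structurally identical integral with one $K(m)$ index transferred from a factor $\xi$ onto ${\vec v}$, so an induction on $m$ (base case $m=0$ being the rank--two reduction of~\cite{ledgerlionheart2014}) closes this piece. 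For the surface flux I would use the interior equation of the transmission problem to write ${\vec v}=\tfrac{1}{\im\omega\sigma_*\alpha^2}\nabla_\xi\times(\mu_*^{-1}\nabla_\xi\times{\vec\theta}_{J(p+1)})$, so that ${\vec v}\cdot{\vec n}=\tfrac{1}{\im\omega\sigma_*\alpha^2}\,\mathrm{div}_\Gamma({\vec n}\times\mu_*^{-1}\nabla_\xi\times{\vec\theta}_{J(p+1)})$, and integrate by parts on $\Gamma$; the derivatives then fall on $\xi_\ell\xi_r(\Pi)_{K(m)}$ and, using that ${\vec n}\times(\cdots)$ is tangential, leave only Kronecker--delta contractions. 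Every surviving $\delta$ either ties $r$ (or $\ell$) to a Green--function index, where it meets the antisymmetry of $\varepsilon_{ikr}$ against the symmetric $({\vec D}_x^{2+m}G)$, or ties two Green--function indices, where it is a trace killed by harmonicity; in both cases the term vanishes. The hard part will be exactly this last bookkeeping --- keeping track of the boundary and polynomial--weight remainders and checking that each reduces to one of these annihilated $\delta$--patterns.

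With $\mathrm{Sym}_{r\ell}$ shown to be inert, the reduction is immediate: I set $\widecheck{\mathfrak C}_{[[k,K(m)],J(p+1)]}:=\tfrac12\varepsilon_{k\ell r}\mathfrak{C}_{[[r,\ell,K(m)],J(p+1)]}$, and since $\varepsilon_{k\ell r}\xi_\ell v_r=({\vec\xi}\times{\vec v})_k$ this collapses at once to the stated formula, while the Hodge inverse $\mathfrak{C}_{[[r,\ell,K(m)],J(p+1)]}=\varepsilon_{\ell r k}\widecheck{\mathfrak C}_{K(m+1)J(p+1)}$ returns exactly the retained skew part. Finally $\widecheck{\mathfrak C}=\widecheck{\widecheck{\mathfrak A}}$ is a genuine rank $2+m+p$ tensor: it is a double dual of the tensor $\mathfrak{A}$ of Lemma~\ref{lemma:tensors}, and the two orientation signs introduced by the $\varepsilon$'s cancel, as already noted before Lemma~\ref{lemma:reduction1}.
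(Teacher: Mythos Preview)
Your strategy differs from the paper's in a genuine way, but two of your three pieces do not close as written.

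\emph{Paper's route versus yours.} The paper does not attack $\mathrm{Sym}_{r\ell}$ via a divergence identity. Instead it substitutes the interior equation of (\ref{eqn:transproblemtheta}), replacing ${\vec v}$ by $\nabla_\xi\times\mu_*^{-1}\nabla_\xi\times{\vec\theta}_{J(p+1)}$ (up to constants), and integrates by parts once. This yields a bulk term with $\nabla_\xi(\xi_\ell(\Pi)_{K(m)})\times{\vec e}_r$, which is manifestly ${\vec e}_\ell\times{\vec e}_r$ plus a weight--gradient piece, and a boundary term. The boundary term is then handled by invoking the jump condition on $\Gamma$ and pushing the integral into $B^c$, where the exterior curl--curl equation and the decay of ${\vec\theta}$ apply. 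Each resulting subterm is either overtly skew in $(r,\ell)$, or becomes so after using the total symmetry of $({\vec D}^{2+m}G)$, or is killed by $\nabla\times{\vec H}_0=0$ via (\ref{eqn:curlXizero}).

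\emph{Where your argument breaks.} First, a small correction: $\nabla_\xi\cdot{\vec\theta}_{J(p+1)}=0$ is asserted only in $B^c$; taking the divergence of the interior equation of (\ref{eqn:transproblemtheta}) shows rather that $\nabla_\xi\cdot{\vec v}=0$ in $B$, so your divergence piece is simply zero and needs no contraction argument. The real problem is your weight--derivative term: it does not set up an induction on $m$. After contracting $\xi_\ell\xi_r\,\partial_s(\Pi)_{K(m)}$ with the symmetric $({\vec D}^{2+m}G)_{[\ell,k,K(m)]}$ and relabelling, you get back $m$ times $\int_B(\Pi)_{K(m)}\xi_r v_\ell\,\dif{\vec\xi}$ contracted with the \emph{same} Green tensor, i.e.\ the original object with the roles of $r$ and $\ell$ exchanged in $\mathfrak{C}$ --- not a lower--$m$ instance of the same claim. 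Your surface--flux step is also incomplete: after integrating by parts on $\Gamma$ you obtain $\int_\Gamma\nabla_\Gamma\!\big(\xi_\ell\xi_r(\Pi)_{K(m)}\big)\cdot({\vec n}\times\mu_*^{-1}\nabla_\xi\times{\vec\theta}|_-)\,\dif{\vec\xi}$, and this is not a sum of Kronecker contractions with fixed vectors; it is still a nontrivial boundary integral against the tangential trace of $\mu_*^{-1}\nabla_\xi\times{\vec\theta}$ from the inside. To evaluate it you must feed in the jump condition and extend to $B^c$ --- which is precisely the mechanism the paper uses for its boundary term $\mathrm{T}_3$ --- and only then do the pieces reduce to the skew or zero patterns you describe.
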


\begin{proof}
We can use the transmission problem (\ref{eqn:transproblemtheta}) to  write 
\begin{align}
{{\mathfrak C}_{[[r,\ell,K(m)] ,J(p+1)]}} = & A(m,p) {\vec e}_r \cdot \int_B    
  {\xi}_{\ell }  (\Pi({\vec \xi}))_{K(m)}   (   {\vec \theta}_{ {J}(p+1)}  + (\Pi({\vec \xi}))_{J(p)} {\vec e}_j \times {\vec \xi} )   \dif {\vec \xi}  \nonumber \\
 = & \frac{A(m,p)}{\im \nu} \int_B ( {{\vec e}_r { \xi}_{\ell }  (\Pi({\vec \xi}))_{K(m)} )  \cdot \nabla_\xi \times \mu_*^{-1} \nabla_\xi \times {\vec \theta}_{{J}(p+1)}} \dif {\vec {\xi}}
 =  \frac{A(m,p)}{\im \nu}  \mathrm{T} , \nonumber
 \end{align}
where, by application of integration by parts, we have $ \mathrm{T}= \mathrm{T}_1 + \mathrm{T}_2 + \mathrm{T}_3$ and
\begin{align}
\mathrm{T}_1 =&  -\int_B ( \Pi({\vec \xi}) )_{K(m)}  ( {\vec e}_\ell  \times {\vec e}_r ) \cdot  \mu_*^{-1} \nabla_\xi \times {\vec \theta}_{{J}(p+1)} \dif {\vec {\xi}} \nonumber , \\
\mathrm{T}_2 =&   - \int_B   { \xi}_{\ell } ( \nabla _\xi(  (\Pi({\vec \xi}))_{K(m)}  ) \times {\vec e}_r ) \cdot  \mu_*^{-1} \nabla_\xi \times {\vec \theta}_{{J}(p+1)} \dif {\vec {\xi}} \nonumber , \\
 \mathrm{T}_3 =& \int_\Gamma \left .  {\vec n}^- \cdot  \mu_* \nabla_\xi \times {\vec \theta}_{{J}( p+1)}  \times ( {\vec e}_r { \xi}_{\ell }  ( \Pi({\vec \xi}) )_{K(m)} ) \right |_- \dif {\vec \xi} \nonumber .
\end{align}
We see immediately that
\begin{equation}
\mathrm{T}_1 =  - \varepsilon_{s r \ell} {\vec e}_s  \cdot \int_B ( \Pi({\vec \xi}) )_{K(m)}    \mu_*^{-1} \nabla_\xi \times {\vec \theta}_{{J}(p+1)} \dif {\vec {\xi}} , \nonumber
\end{equation}
and so is skew symmetric with respect to indices $r$ and $\ell$. 
In light of (\ref{eqn:fulltensorexp}), we see $\mathrm{T}_1$ is summed with $( {\vec D}_x^{2+m} G( {\vec x},{\vec z} )  )_{[\ell ,K(m+1)]} $ and  $ ( {\vec D}_z^p({\vec H}({\vec z})))_{ {J}(p+1)}$ and so we have
\begin{align}
&( {\vec D}_x^{2+m} G( {\vec x},{\vec z} )  )_{[\ell ,K(m+1)]} \mathrm{T}_1  (  {\vec D}_z^p({\vec H}({\vec z})))_{ {J}(p+1)} =\nonumber \\
  &-( {\vec D}_x^{2+m} G( {\vec x},{\vec z} )  )_{[\ell, K(m+1)]}  \varepsilon_{s r \ell} {\vec e}_s  \cdot \int_B ( \Pi({\vec \xi}) )_{K(m)}    \mu_*^{-1} \nabla_\xi \times {\vec \theta}_{{J}(p+1)} \dif {\vec {\xi}} (  {\vec D}_z^p({\vec H}({\vec z})))_{ {J}(p+1)} . \nonumber
\end{align}

In light of (\ref{eqn:fulltensorexp}) and the form of $\mathrm{T}_2$ we see that the term ${\xi}_\ell (\nabla_\xi (  (\Pi({\vec \xi}) )_{K(m)}  ) \times {\vec e}_r )$ will be summed with  $( {\vec D}_x^{2+m} G( {\vec x},{\vec z} )  )_{[\ell ,K(m+1)]} $ and so
\begin{align}
( {\vec D}_x^{2+m} & G( {\vec x},{\vec z} )  )_{[\ell ,K(m+1)]} { \xi}_\ell (\nabla_\xi (  (\Pi({\vec \xi}) )_{K(m)}  ) \times {\vec e}_r )  = \varepsilon_{str}
(  {\vec D}_x^{2+m} G( {\vec x},{\vec z} )  )_{[\ell ,K(m+1)]}  {\xi}_\ell  \frac{\partial}{{\partial_{ \xi_t}}}(  (\Pi({\vec \xi}) )_{K(m)}  ) {\vec e}_s \nonumber \\
  =&  \varepsilon_{str}
 ( {\vec D}_x^{2+m} G( {\vec x},{\vec z} )  )_{[\ell ,K(m+1)]}  {\xi}_\ell  \left ( 
  \delta_{t k_1} \xi_{k_2} \cdots \xi_{k_m}  + \cdots + \xi_{k_1} \cdots \xi_{k_{m-1}} \delta_{t k_m} \right ){\vec e}_s \nonumber \\
  =&  \varepsilon_{str} \left (
 ( {\vec D}_x^{2+m} G( {\vec x},{\vec z} )  )_{[\ell ,k,t, k_2,\cdots, k_m]}  {\xi}_\ell \xi_{k_2} \cdots \xi_{k_m}  + \right .\cdots \nonumber \\
 &+ \left .  (   {\vec D}_x^{2+m} G( {\vec x},{\vec z} )  )_{[\ell ,k,k_1, \cdots, k_{m-1},t]}{ \xi}_{k_1} \xi_{k_2} \cdots \xi_{k_m-1} \xi_\ell \right) {\vec e}_s
 \nonumber \\
 = & m \varepsilon_{str}  ({\vec D}_x^{2+m} G( {\vec x},{\vec z} )  )_{[k,t, K(m) ]}  (\Pi({\vec \xi}))_{K(m)}{\vec e}_s =m \varepsilon_{s\ell r}  ({\vec D}_x^{2+m} G( {\vec x},{\vec z} )  )_{[\ell , K(m+1) ]}  (\Pi({\vec \xi}))_{K(m)}{\vec e}_s \nonumber,
\end{align}
 by interchanging the order of differentiation in $ {\vec D}_x^{2+m} G( {\vec x},{\vec z} )  $. Thus,  this term is skew symmetric with respect to indices $r$ and $\ell$,
 and, in a similar manner to $\mathrm{T}_1$
\begin{align}
&( {\vec D}_x^{2+m} G( {\vec x},{\vec z} )  )_{[\ell ,K(m+1)]} \mathrm{T}_2    {\vec D}_z^p({\vec H}({\vec z}))_{ {J}(p+1)} \nonumber \\
&=  -m( {\vec D}_x^{2+m} G( {\vec x},{\vec z} )  )_{[\ell ,K(m+1)]}  \varepsilon_{s r \ell} {\vec e}_s  \cdot \int_B ( \Pi({\vec \xi}) )_{K(m)}    \mu_*^{-1} \nabla_\xi \times {\vec \theta}_{{J}(p+1)} \dif {\vec {\xi}}   ({\vec D}_z^p({\vec H}({\vec z}) )) _{ {J}(p+1)} . \nonumber
\end{align}
 For $ \mathrm{T}_3 $ we use the alternative form
 \begin{align}
 \mathrm{T}_3 =  & \int_\Gamma \left ( {\vec e}_r \xi_\ell ( \Pi({\vec \xi}))_{K(m)} \right ) \cdot \left ( \left . {\vec n}^- \times \mu_0^{-1} \nabla_\xi \times {\vec \theta}_{{J}( p+1)}  \right |_+    + (p+2) [ \mu^{-1}]_\Gamma {\vec n}^- \times ((\Pi({\vec \xi}))_{J(p)} {\vec e}_j  )  \right )\dif {\vec \xi} \nonumber \\
  = & - \int_{B^c}  \nabla_\xi \cdot ( \mu_0^{-1} \nabla_\xi \times {\vec \theta}_{{J}(p+1)}  \times ( {\vec e}_r \xi_\ell ( \Pi({\vec \xi}))_{K(m)} ) )
\dif {\vec \xi} \nonumber \\
& +  (p+2) [ \mu^{-1}]_\Gamma \int_B \nabla_\xi \cdot   (   ( (\Pi({\vec \xi}))_{J(p)} {\vec e}_j ) \times   \left ( {\vec e}_r \xi_\ell ( \Pi({\vec \xi}))_{K(m)} \right )  ) \dif {\vec \xi} \nonumber \\
 = &- \int_{B^c}   \mu_0^{-1} \nabla_\xi \times {\vec \theta}_{{J}(p+1)}  \cdot  {\vec e}_\ell \times {\vec e}_r  ( \Pi({\vec \xi}))_{K(m)} \dif {\vec \xi}   
 - \int_{B^c}   \mu_0^{-1} \nabla_\xi \times {\vec \theta}_{{J}(p+1)}  \cdot ( \nabla_\xi ( \Pi({\vec \xi}))_{K(m)}   \times {\vec e}_r \xi_\ell )  \dif {\vec \xi}   \nonumber \\
 &+(p+2) [ \mu^{-1}]_\Gamma \int_B ( \nabla_\xi    ( \Pi({\vec \xi}))_{J(p)} \times  {\vec e}_j ) \cdot   \left ( {\vec e}_r \xi_\ell ( \Pi({\vec \xi}))_{K(m)} \right )   \dif {\vec \xi} \nonumber \\
 &+(p+2) [ \mu^{-1}]_\Gamma \int_B ( (  \Pi({\vec \xi}))_{J(p)}  {\vec e}_j ) \cdot  ( {\vec e}_\ell \times  {\vec e}_r  ( \Pi({\vec \xi}))_{K(m)} )   \dif {\vec \xi} \nonumber \\
 &+(p+2) [ \mu^{-1}]_\Gamma \int_B ( (  \Pi({\vec \xi}))_{J(p)}  {\vec e}_j ) \cdot  (\nabla_\xi ( \Pi({\vec \xi}))_{K(m)}  \times  {\vec e}_r) \xi_\ell    \dif {\vec \xi} \nonumber \\
 = & \mathrm{T}_3^A +\mathrm{T}_3^B + \mathrm{T}_3^C +   \mathrm{T}_3^D+\mathrm{T}_3^E , \nonumber
   \end{align}
 by using integration by parts and the transmission problem (\ref{eqn:transproblemtheta}). We see immediately that $\mathrm{T}_3^A$ and $ \mathrm{T}_3^D$ are skew symmetric with respect to $r$ and $\ell$. Terms $\mathrm{T}_3^B$ and $\mathrm{T}_3^E$ are similar to $\mathrm{T}_2$ and can be treated analogously. In term $\mathrm{T}_3^C$ note this is summed with $ ({\vec D}_z^p({\vec H}_0({\vec z})))_{ {J}(p+1)}$ and so recalling (\ref{eqn:curlXizero}) 
 \begin{align} 
(  \nabla_\xi ( \Pi({\vec \xi}))_{J(p)} \times  {\vec e}_j ) ({\vec D}_z^p({\vec H}_0({\vec z})))_{ {J}(p+1)}  = \nabla_\xi \times( (\Pi({\vec \xi}))_{J(p)}   ({\vec D}_z^p({\vec H}_0({\vec z})))_{ {J}(p+1)} {\vec e}_j ) = {\vec 0} \nonumber ,
  \end{align}
in $B$ and consequently  $\mathrm{T}_3^C   ( {\vec D}_z^p({\vec H}_0({\vec z})))_{ {J}(p+1)}=0$. 
\end{proof}

\section*{Acknowledgements}
The first author would like to acknowledge the support received from the EPSRC grant EP/R002134/1 and the second author would like to acknowledge the support received from the EPSRC grant  EP/R002177/1 and  a Royal Society Wolfson Research Merit Award and a Royal Society Challenge Grant. {We would also like to thank the anonymous reviewer for making a number of useful suggestions, which have improved the article.}

\bibliographystyle{plain}
\bibliography{ledgerlionheart}

\end{document}